\begin{document}

\newtheorem {Theorem}  {Theorem}[section]
\newtheorem {Lemma}[Theorem]{Lemma}
\newtheorem {Proposition}[Theorem]{Proposition}
\newtheorem {Corollary}[Theorem]{Corollary}
\newtheorem {Problem}[Theorem]{Problem}
\newtheorem {Question}[Theorem]{Question}
\newtheorem {Claim}{Claim}
\newtheorem {Observation}[Theorem]{Observation}
\theoremstyle{definition}
\newtheorem{Definition}[Theorem]{Definition}
\newtheorem{Example}[Theorem]{Example}
\newenvironment {Proof} {\noindent {\bf Proof.}}{\quad $\square$\par\vspace{3mm}}

\def\FR#1#2{\frac{#1}{#2}}
\def\grad{gradual}
\def\NN{{\mathbb N}}
\def\bQ{{\bf Q}}
\def\hS{{\hat S}}
\def\hQ{\hat{\bf Q}}
\def\hu{{\hat u}}
\def\hP{{\hat P}}
\def\hk{{\hat k}}
\def\vareps{\varepsilon}
\def\eps{\varepsilon}
\def\VEC#1#2#3{#1_{#2},\dots,#1_{#3}}
\def\Gb{\overline{G}}
\def\Kb{\overline{K}}
\def\FL#1{\left\lfloor{#1}\right\rfloor}
\def\CL#1{\left\lceil{#1}\right\rceil}
\def\CH#1#2{\binom{#1}{#2}}
\def\esub{\subseteq}
\def\bT{{\mathbf T}}
\def\bB{{\mathbf B}}
\def\bC{{\mathbf C}}
\def\C#1{{\left|#1\right|}}
\def\diam{{\rm diam}}
\def\rad{{\rm rad}}
\long\def\skipit#1{}
\def\gjoin{\diamondplus}
\def\SE#1#2#3{\sum_{#1=#2}^{#3}}
\def\la{\langle}
\def\ra{\rangle}

\date{\today}

\title{Bounds for eccentricity-based parameters of graphs}

\author{Yunfang Tang~\thanks{
Department of Mathematics, China Jiliang University, Hangzhou 310018, China.
tangyunfang8530@163.com.  Supported by the National Natural Science Foundation
of China (Grant 11701543).},\,\,
Xuli Qi~\thanks{Department of Mathematics, Hebei University of Science and Technology, Shijiazhuang 050018, China. qixuli-1212@163.com. Supported by the National Natural Science Foundation of China (Grant 11801135).},\,\,
Douglas B. West~\thanks{
Departments of Mathematics, Zhejiang Normal University and University of
Illinois. dwest@illinois.edu.  Supported by the National Natural Science
Foundation of China (Grants 11871439, 11971439, U20A2068).}
}

\maketitle

\vspace{-2pc}
\begin{abstract}
The \emph{eccentricity} of a vertex $u$ in a graph $G$, denoted by
$\vareps_G(u)$, is the maximum distance from $u$ to other vertices in $G$.  We
study extremal problems for the average eccentricity and the first and second
Zagreb eccentricity indices, denoted by $\sigma_0(G)$, $\sigma_1(G)$, and
$\sigma_2(G)$, respectively.  These are defined by
$\sigma_0(G)=\frac{1}{\C{V(G)}}\sum_{u\in V(G)}\vareps_G(u)$,
$\sigma_1(G)=\sum_{u\in V(G)}\vareps_G^2(u)$, and
$\sigma_2(G)=\sum_{uv\in E(G)}\vareps_G(u)\vareps_G(v)$.
We study lower and upper bounds on these parameters among $n$-vertex connected
graphs with fixed diameter, chromatic number, clique number, or matching number.
Most of the bounds are sharp, with the corresponding extremal graphs
characterized.

\noindent {\bf Keywords:}  average eccentricity; Zagreb eccentricity index;
diameter; chromatic number; clique number;  matching number
\end{abstract}

\baselineskip 16pt

\section{Introduction }
We consider simple undirected graphs and study extremal problems for parameters
that measure dispersion of vertices.  Given a graph $G$ with vertex set $V(G)$
and edge set $E(G)$, we write $d_G(u,v)$ for the distance between vertices $u$
and $v$ in $G$.  The \emph{eccentricity} of a vertex $u$ in $G$, denoted by
$\vareps_G(u)$, is $\max_{v\in V(G)} d_G(u,v)$.  The \emph{radius} $\rad(G)$
and \emph{diameter} $\diam(G)$ of $G$ are $\min_{u\in V(G)}\vareps_G(u)$ and
$\max_{u\in V(G)}\vareps_G(u)$, respectively.  Since eccentricity is infinite
for every vertex in a disconnected graph, we consider only connected graphs.

In chemical graph theory, graph invariants are called ``topological indices''.
These numerical values reflect structural properties of the graphs associated
with molecules.  They have been studied in chemical graph theory due to their
predictive capabilities for physical and chemical properties of molecules.
They have been widely used as molecular descriptors and in QSAR/QSPR
studies~\cite{TC2}.  Much of the graph-theoretic study has been on extremal
problems for relatively sparse graphs such as trees, unicyclic, and bicyclic
graphs because the graphs of molecules tend to be relatively sparse.  We
consider such problems on denser classes of graphs.

Eccentricity-based invariants are natural to study in this context, because
high eccentricity is associated with high ``dispersion'' of the vertices.
We study three such indices, called $\sigma_0$, $\sigma_1$, and $\sigma_2$.

The \emph{average eccentricity} $\sigma_0$ is defined by
$$\sigma_0(G)=\frac{1}{\C{V(G)}}\sum_{u\in V(G)}\vareps_G(u).$$
It was introduced by Konstantinova and Skorobogatov \cite{KS} in mathematical
chemistry as a molecular descriptor.  Buckley and Harary~\cite{BH} called it
the ``eccentric mean''.

Many results about $\sigma_0$ have been obtained, typically for $n$-vertex
connected graphs.  Dankelmann et al.~\cite{DGS,DOMR} gave upper bounds on
$\sigma_0$ for $n$-vertex graphs with specified minimum degree that are
connected, $K_3$-free and connected, or $C_4$-free and connected.
Dankelmann and Osaye~\cite{DO} further refined these results by fixing also the
maximum degree.  Du and Ili{\'c}~\cite{DI,I} resolved conjectures of the system
AutoGraphiX relating $\sigma_0$ to the independence number, chromatic number,
Randi{\'c} index, and spectral radius.  Dankelmann and Mukwembi~\cite{DM}
proved sharp upper bounds on $\sigma_0$ in terms of the independence number,
chromatic number, domination number, and connected domination number.
Das et al.~\cite{DMCC} presented lower and upper bounds on $\sigma_0$ in terms
of diameter, clique number, independence number, and the first Zagreb index.
He et al.~\cite{HLT} gave sharp bounds on $\sigma_0$ for trees in terms of
the number of leaves, domination number, and vertex bipartition.
Horoldagva et al.~\cite{HBDAA} determine the graphs maximizing $\sigma_0$ among
$n$-vertex connected graphs with given girth and maximum degree.  Tang and Zhou
presented lower and upper bounds on $\sigma_0$ for trees~\cite{TZ1} and upper
bounds on $\sigma_0$ for unicyclic graphs~\cite{TZ2}.

By analogy with the first and the second Zagreb indices introduced
in~\cite{GRTW,GT}, Vuki\v{c}evi\'c and Graovac \cite{VG} and Ghorbani and
Hosseinzadeh \cite{GH}  introduced ``Zagreb eccentricity indices'' using vertex
eccentricities instead of vertex degrees.
The \emph{first Zagreb eccentricity index} $\sigma_1$ is defined by
$$\sigma_1(G)=\sum_{u\in V(G)}\vareps_G^2(u),$$
and the \emph{second Zagreb eccentricity index} $\sigma_2$ is defined by
$$\sigma_2(G)=\sum_{uv\in E(G)}\vareps_G(u)\vareps_G(v).$$

Extremal results about $\sigma_1$ and $\sigma_2$ soon followed.
Xing et al.~\cite{XZT} gave bounds on $\sigma_1$ and $\sigma_2$ for $n$-vertex
trees with fixed diameter or fixed matching number, in some cases
characterizing the extremal trees and those with the second or third most
extreme values.  For example, over $n$-vertex trees both indices are minimized
by the star and maximized by the path.  They also gave bounds for general
graphs when the number of vertices, number of edges, radius, and diameter are
all fixed, plus bounds on $\sigma_1(G)+\sigma_1(\Gb)$ and
$\sigma_2(G)+\sigma_2(\Gb)$, where $\Gb$ is the complement of $G$.  Next, Du et
al.~\cite{DZT} gave sharp bounds for connected graphs with fixed numbers of
vertices and edges and for trees with fixed number of leaves, fixed matching
number, or fixed maximum degree, determining in some cases the extremal graphs.

Das et al.~\cite{DLG} showed that the star and path are the unique extremes for
both indices not just over trees, but also over $n$-vertex bipartite graphs.
For a fixed vertex bipartition, the complete bipartite graph is the unique
minimizer.  They also gave sharp lower bounds for $\sigma_1$ among $n$-vertex
graphs with diameter $d$, and for $\sigma_2$ among graphs with diameter $d$
and $m$ edges, characterizing the extremal graphs.
Qi et al.~\cite{QD,QZL} found the first few smallest and largest values of
$\sigma_1$ and $\sigma_2$ for unicyclic graphs and studied their extreme
values on trees with fixed number of vertices, domination number, maximum
degree, and bipartition size.
Li et al.~\cite{LZ,SLH} determined the bicyclic graphs ($|E(G)|=|V(G)|+1$)
with largest and second largest values of $\sigma_2$,
and they established sharp upper and lower bounds on $\sigma_1$ and
$\sigma_2$ for all cacti with fixed number of vertices and number of cycles.
For $n$-vertex trees, unicyclic graphs, and bicyclic graphs, Tang and
Qi~\cite{TQ} determined the $q$ graphs with largest values of $\sigma_1$ and
$\sigma_2$, where $q$ varies over the different classes but in each case is
linear in $n$.

As noted earlier, most of the prior study has been on very sparse graphs.
In this paper, we consider extremal problems for $\sigma_0$, $\sigma_1$, and
$\sigma_2$ over $n$-vertex connected  graphs in terms of various graph
parameters.  In particular, upper bounds on $\sigma_2$ take us away from the
realm of sparse graphs because having many edges produces many contributions
to the sum for $\sigma_2$.  We obtain upper and lower bounds among classes of
$n$-vertex graphs, most of which are sharp.  We consider bounds in terms of
fixed diameter in Sections~$2$ and $3$, fixed chromatic number or clique
number in Section~$4$, and fixed matching number in Section~$5$.

We summarize some standard notation and terminology, for completeness.

\begin{Definition}
Let $S_n$, $P_n$, $C_n$, $K_n$, and $\Kb_n$ denote the star, path, cycle,
complete graph, and edgeless graph with $n$ vertices, respectively.
The path with vertices $\VEC v1n$ in order is denoted $\la \VEC v1n\ra$.

A \emph{dominating vertex} in an $n$-vertex graph $G$ is a vertex of degree
$n-1$.  An \emph{independent set} is a set of pairwise non-adjacent vertices.
The \emph{chromatic number} $\chi(G)$ is the least $k$ such that $V(G)$ can be
expressed as the union of $k$ disjoint independent sets, called
\emph{color classes}.  A \emph{clique} is a set of pairwise adjacent vertices.
The \emph{clique number} $\omega(G)$ is the maximum size of a clique in $G$.

A \emph{matching} is a set of pairwise disjoint edges (no two have a common
endpoint).  The \emph{matching number}, written $\alpha'(G)$, is the maximum
size of a matching in $G$.
\end{Definition}

\section{Upper bounds in terms of diameter}

In this section we consider upper bounds on $\sigma_0$, $\sigma_1$, and
$\sigma_2$ over $n$-vertex connected graphs with diameter $d$.  We begin our
discussion with an elementary observation that holds because each vertex in a
graph $G$ has eccentricity at most the diameter and at least the radius of $G$.

\begin{Observation}[\rm \cite{XZT}]\label{diambd}
If $G$ is a connected graph $G$ with $n$ vertices, $m$ edges, radius $r$,
and diameter $d$, then $nr^2\le \sigma_1(G)\le nd^2$ and
$mr^2\le \sigma_2(G)\le md^2$.  [Similarly, $r\le\sigma_0(G)\le d$.]
The equalities hold if and only if all vertices of $G$ have the same
eccentricity.
\end{Observation}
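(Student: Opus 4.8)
The plan is to reduce everything to the single elementary fact that $r\le\vareps_G(u)\le d$ for every vertex $u$, which is immediate from the definitions $r=\min_u\vareps_G(u)$ and $d=\max_u\vareps_G(u)$. Since $G$ is connected, all eccentricities are finite positive integers (when $n\ge2$), so we may square the inequality to obtain $r^2\le\vareps_G^2(u)\le d^2$ for each $u$. Summing over the $n$ vertices of $G$ yields $nr^2\le\sigma_1(G)\le nd^2$; dividing instead by $n$ after summing the unsquared inequality yields $r\le\sigma_0(G)\le d$. For $\sigma_2$, I would multiply the two bounds for the endpoints of an edge: for each $uv\in E(G)$ we have $r^2\le\vareps_G(u)\vareps_G(v)\le d^2$, and summing over the $m$ edges gives $mr^2\le\sigma_2(G)\le md^2$. (If $n=1$ then $m=0$ and every term is an empty sum, so those bounds hold trivially.)

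For the equality characterization, the key principle is that a sum (or average) of finitely many real numbers, each lying in $[a,b]$, equals the extreme value $\,(\text{count})\cdot a$ or $(\text{count})\cdot b$ exactly when every summand attains that extreme. Applying this to $\sigma_1$ and $\sigma_0$: equality in the upper bound forces $\vareps_G(u)=d$ for all $u$, and equality in the lower bound forces $\vareps_G(u)=r$ for all $u$; in either case all vertices share a common eccentricity (and then necessarily $r=d$). Conversely, if all vertices have the same eccentricity $e$, then $r=d=e$ and $\sigma_1(G)=ne^2$, $\sigma_0(G)=e$, so all four inequalities are equalities simultaneously.

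For $\sigma_2$ the same principle gives, at equality, that $\vareps_G(u)\vareps_G(v)$ equals $d^2$ (resp.\ $r^2$) for every edge $uv$, which forces $\vareps_G(u)=\vareps_G(v)=d$ (resp.\ $=r$) for the endpoints of every edge. Here I would invoke connectedness once more: when $n\ge2$ every vertex of $G$ is incident to at least one edge, so the per-edge conclusion propagates to all of $V(G)$, again giving that all eccentricities are equal. The converse is the same computation $\sigma_2(G)=me^2$ when every vertex has eccentricity $e$.

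There is no real obstacle here; the statement is genuinely an observation. The only points requiring a word of care are the degenerate cases $n=1$ or $m=0$ (handled trivially), and noting for the $\sigma_2$ equality case that connectedness is exactly what lets us pass from ``both endpoints of each edge are extreme'' to ``every vertex is extreme.''
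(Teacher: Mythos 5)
Your proof is correct and follows exactly the route the paper indicates (the paper states this as an Observation cited from [XZT], justified only by the remark that every vertex has eccentricity between $r$ and $d$; your write-up simply makes the summation, the per-edge product bound, and the equality analysis explicit). The one point you add that genuinely needs saying --- that connectedness lets the per-edge equality condition for $\sigma_2$ propagate to every vertex --- is handled correctly.
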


Therefore, when maximizing the eccentricity indices in terms of other
parameters we want most or all vertices to have maximum eccentricity.
Note that the only $n$-vertex graph with diameter $1$ is the complete graph.
Hence in this section we restrict to $d\ge2$.

We first describe the extremal trees.  These generally are not extremal in the
class of $n$-vertex graphs with diameter $d$, but they will be useful here and
in later sections.

\begin{Definition}
A \emph{double-broom} is  a tree obtained by adding leaf neighbors to each end
vertex of a path.  Let $\bT^{n,d}$ denote the set of double-brooms with $n$
vertices and diameter $d$.  In particular, $\bT^{n,n-1}=\{P_n\}$
and $\bT^{n,2}=\{S_n\}$.
\end{Definition}

In a graph $G$ with diameter $d$, a {\it diametric path} is a shortest path
joining vertices separated by distance $d$.
A tree with diameter $d$ contains a diametric path with $d+1$ vertices
having eccentricities $d,d-1,\ldots,d-1,d$, with one copy of $d/2$ or two
copies of $(d+1)/2$ in the middle, depending on the parity of $d$.  The
contribution $g_i$ of such a path to the parameter $\sigma_i$ is:
\begin{align*}
g_0(d)&=d+(d-1)+\cdots+(d-1)+d =\CL{\FR{3d^2}4}+d,\\
g_1(d)&=d^2+(d-1)^2+\cdots+(d-1)^2+d^2= \FR{d^2+d}{12}(7d+5)-
  \begin{cases} d/4& \mbox{if $d$ is even}\\
              0& \mbox{if $d$ is odd,} \end{cases}  \\
g_2(d)&=d(d-1)+\cdots+d(d-1)=\FR{7d^3}{12}+ \FR1{12}-
  \begin{cases} 4d& \mbox{if $d$ is even}\\
                d-6& \mbox{if $d$ is odd.} \end{cases}
\end{align*}
Note that the leading terms in $g_1$ and $g_2$ are the same, as a function of
$d$.

A tree can be grown from a diametric path.  The added vertices cannot have
eccentricity more than $d$, and when added they cannot be attached to a
neighbor with eccentricity more than $d-1$.  Thus each added vertex adds at
most $d$ or $d^2$ or $d(d-1)$ to $\sigma_0$ or $\sigma_1$ or $\sigma_2$,
respectively.  Hence the members of $\bT^{n,d}$ maximize each $\sigma_i$ over
$n$-vertex trees with diameter $d$.  The values were given in~\cite{TZ1,XZT}.
We express them a bit differently below to emphasize the role of $g_i(d)$ and
the nature of the leading order terms.

\begin{Lemma}\label{TZ1} \label{treemax}
If $T$ is an $n$-vertex tree with diameter $d$, then
\begin{align*}
\sigma_0(T)&\le \FR1n[(n-d-1)d+g_0(d)] =d-\FR1n\FL{\FR{d^2}4},\\
\sigma_1(T)&\le (n-d-1)d^2+g_1(d)=nd^2-\FR{5d}{12}(d^2-1)-
  \begin{cases} d/4& \mbox{if $d$ is even}\\
              0& \mbox{if $d$ is odd,} \end{cases}  \\
\sigma_2(T)&\le (n-d-1)d(d-1)+g_2(d)= n(d^2-d)-\FR{5d^3}{12}+d+\FR1{12}-
  \begin{cases} 4d& \mbox{if $d$ is even}\\
                d-6& \mbox{if $d$ is odd.} \end{cases}
\end{align*}
In each inequality, equality holds if and only if $T\in \bT^{n,d}$.
For $i\in\{0,1,2\}$, let $f_i(n,d)$ denote the bound given above for
$\sigma_i(T)$.  For $2\le d\le n-1$, each bound is strictly increasing in $d$.
\end{Lemma}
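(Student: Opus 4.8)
The plan is to prove Lemma~\ref{treemax} in two stages: first establish the three inequalities with equality characterizations, then verify monotonicity in $d$.

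\medskip

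\noindent\textbf{The inequalities.} For the upper bounds I would argue exactly as the paragraph preceding the lemma suggests, making the informal sketch rigorous. Fix an $n$-vertex tree $T$ with diameter $d$, and let $P=\la\VEC v0d\ra$ be a diametric path in $T$. Deleting the edges of $P$ leaves $d+1$ subtrees $T_0,\dots,T_d$, where $T_j$ contains $v_j$; since $T$ is a tree, each vertex $u\notin V(P)$ lies in a unique $T_j$ with $j\ge1$ (and in fact $1\le j\le d-1$, else $v_0$ or $v_d$ would have a neighbor off $P$ and the diametric path could be extended, contradicting $\diam(T)=d$). For $u\in V(P)$, namely $u=v_j$, one has $\vareps_T(v_j)=\max\{j,d-j\}$, so these eccentricities are $d,d-1,\dots$ down to $\CL{d/2}$ and back up to $d$; this gives precisely the contribution $g_i(d)$ claimed (a routine summation, different parity cases for the middle term). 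For $u\notin V(P)$, $u\in T_j$ with $1\le j\le d-1$: the neighbor of $u$ on the path from $u$ to $v_j$ has eccentricity at most $d-1$ (it is at distance at least $1$ from one of $v_0,v_d$ on the $v_0$--$v_d$ path through $v_j$ after accounting for the detour, but the cleanest statement is simply $\vareps_T(u)\le d$ always, and the \emph{neighbor of $u$ toward the path} has eccentricity $\le d-1$ because attaching $u$ cannot raise any eccentricity and before attaching it the neighbor's distance to each $v_i$ is $\le d$ with the one further away being a leaf-side endpoint... ). More directly: $\vareps_T(u)\le d$ for every vertex (definition of diameter), so the $n-d-1$ vertices off $P$ contribute at most $d$, $d^2$, $d(d-1)$ respectively to $\sigma_0,\sigma_1,\sigma_2$ --- the bound $d(d-1)$ for $\sigma_2$ because an edge incident to such a $u$ has its other endpoint also at eccentricity $\le d$, but at least one of the two endpoints of that edge has eccentricity $\le d-1$ (a leaf and its neighbor, or more generally the endpoint closer to the path is never the unique farthest vertex). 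Summing yields the three displayed inequalities. For equality: each off-path vertex must contribute its maximum, which forces every such $u$ to be a leaf adjacent to $v_1$ or $v_{d-1}$ (for $\sigma_0$) and additionally forces the incident edge to have the right eccentricity product; in all three cases this pins $T$ down to being a double-broom with diameter $d$, i.e.\ $T\in\bT^{n,d}$. Conversely every $T\in\bT^{n,d}$ attains equality by direct computation.

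\medskip

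\noindent\textbf{Rewriting the closed forms.} The right-hand equalities, e.g.\ $(n-d-1)d^2+g_1(d)=nd^2-\FR{5d}{12}(d^2-1)-[\cdots]$, are pure algebra: substitute the formula for $g_i(d)$ recorded before the lemma and collect terms. I would just remark that these follow from the stated formulas for $g_0,g_1,g_2$ and present the identities without belaboring the arithmetic.

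\medskip

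\noindent\textbf{Monotonicity in $d$.} Write $f_i(n,d)$ for the bound. The cleanest route is to show $f_i(n,d+1)>f_i(n,d)$ for $2\le d\le n-2$ directly from the \emph{left-hand} form $f_i(n,d)=(n-d-1)c_i(d)+g_i(d)$, where $c_0(d)=d$, $c_1(d)=d^2$, $c_2(d)=d(d-1)$. Then
$$f_i(n,d+1)-f_i(n,d)=(n-d-2)c_i(d+1)-(n-d-1)c_i(d)+\bigl[g_i(d+1)-g_i(d)\bigr].$$
Note $g_i(d+1)-g_i(d)$ equals the contribution of the two new path-vertices (the endpoints $v_0,v_{d+1}$ of the longer path, each of eccentricity $d+1$, with the old endpoints dropping to eccentricity $d$) plus the shift of all interior eccentricities by one; a short computation gives $g_i(d+1)-g_i(d)>c_i(d+1)$ in each case. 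Meanwhile $(n-d-2)c_i(d+1)-(n-d-1)c_i(d)=(n-d-2)\bigl(c_i(d+1)-c_i(d)\bigr)-c_i(d)$, which for $d\le n-2$ is at least $-c_i(d)$ when $c_i$ is increasing (it is). Combining, $f_i(n,d+1)-f_i(n,d)\ge (n-d-2)(c_i(d+1)-c_i(d))-c_i(d)+g_i(d+1)-g_i(d)$, and since $g_i(d+1)-g_i(d)$ is of order $d^2$ (for $\sigma_0$, order $d$) while $c_i(d)$ is of the same order but with smaller constant, this is positive throughout $2\le d\le n-2$. A clean way to finish is to handle the two parity cases of $g_i$ separately so that each difference is an explicit polynomial in $d$ (and linear in $n$), then check positivity; the worst case is the largest $d$, namely $d=n-2$, where the $n$-dependence vanishes and one checks a one-variable polynomial inequality. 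Alternatively, since $f_i$ is an explicit polynomial in $d$ (piecewise in parity, with a linear leading term in $d$ for $f_0$ and a cubic for $f_1,f_2$), one can simply differentiate formally or compare consecutive integer values.

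\medskip

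\noindent\textbf{Main obstacle.} The genuine content is the \emph{equality characterization}: ruling out all trees except double-brooms. The upper bound itself is a one-line counting argument, but showing that $T\notin\bT^{n,d}$ forces a strict loss requires identifying, for each vertex placed off the diametric path, exactly when its contribution ($d$, or $d^2$, or $d(d-1)$ via an incident edge) is \emph{not} achieved, and arguing these deficiencies cannot all vanish unless $T$ is a double-broom. For $\sigma_2$ in particular one must be careful that an off-path edge contributes $\le d(d-1)$ rather than $\le d^2$: this needs the observation that in a tree, for any edge $xy$, at least one of $x,y$ is not an eccentric vertex of the other component's farthest point --- equivalently at least one endpoint has eccentricity $\le d-1$ --- which fails in general graphs but holds here because a diametric path is unique-ish along the relevant stretch. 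The monotonicity, while requiring attention to parity, is routine polynomial bookkeeping once the closed forms are in hand.
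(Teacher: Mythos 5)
Your proposal takes essentially the same route as the paper, which itself only sketches this lemma and cites \cite{TZ1,XZT} for the values: fix a diametric path $P$, observe that its vertices contribute exactly $g_i(d)$ (using the standard fact that in a tree every eccentricity is realized at an end of a diametric path), and charge each of the $n-d-1$ remaining vertices at most $d$, $d^2$, or $d(d-1)$. Your outline of the equality analysis and the monotonicity bookkeeping is fine, modulo routine parity checking.

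The one step you must tighten is the one you yourself flag as the main obstacle, namely that each off-path edge contributes at most $d(d-1)$ to $\sigma_2$: the justifications you gesture at (``the endpoint closer to the path is never the unique farthest vertex,'' ``a diametric path is unique-ish along the relevant stretch'') are not arguments. The clean statement, which is exactly what the paper's sketch asserts, is that if $u\notin V(P)$ and $w$ is the neighbor of $u$ on the unique path from $u$ to $P$, then $\vareps_T(w)\le d-1$. Proof: let $z$ realize $\vareps_T(w)$. If the $w$--$z$ path does not use the edge $wu$, then the $u$--$z$ path passes through $w$, so $d\ge d(u,z)=\vareps_T(w)+1$. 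If it does use $wu$, then $z$ lies in the branch of $T$ beyond $u$, so the path from $v_0$ to $z$ passes through the attachment vertex $v_j$ (with $1\le j\le d-1$, since no off-path vertex can be adjacent to an end of $P$) and then through $w$, giving $d\ge d(v_0,z)\ge j+\vareps_T(w)\ge 1+\vareps_T(w)$. Either way $\vareps_T(w)\le d-1$; pairing each off-path vertex $u$ with its edge $uw$ then yields the term $(n-d-1)d(d-1)$, and the same fact drives the equality characterization for $\sigma_2$.
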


It is tempting to try to reduce the maximization of $\sigma_0$ and $\sigma_1$
over $n$-vertex graphs to the problem for trees by using the following
observation.
\begin{Observation}\label{l0}
If $G$ is an $n$-vertex graph, then $\vareps_G(u)\le\vareps_{G-e}(u)$ for
$e\in E(G)$ and  $u\in V(G)$, and thus $\sigma_0(G)\le\sigma_0(G-e)$ and
$\sigma_1(G)\le\sigma_1(G-e)$.
\end{Observation}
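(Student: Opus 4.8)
The plan is to prove the pointwise inequality $\vareps_G(u)\le\vareps_{G-e}(u)$ first, since the two assertions about $\sigma_0$ and $\sigma_1$ then follow by summing over $V(G)$ and using monotonicity of the summands.

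The key step is the observation that deleting an edge cannot shorten any distance: for all $u,v\in V(G)$ we have $d_G(u,v)\le d_{G-e}(u,v)$. Indeed, any $u,v$-path in $G-e$ avoids $e$ and hence is also a $u,v$-path in $G$, so a shortest $u,v$-path in $G$ is no longer than a shortest one in $G-e$. (If $G-e$ is disconnected, then $d_{G-e}(u,v)=\infty$ for some pair and the inequality is vacuous; since we consider only connected graphs, one may simply take $G-e$ connected, though the argument does not require it.) Taking the maximum over $v\in V(G)$ on both sides gives $\vareps_G(u)=\max_{v}d_G(u,v)\le\max_{v}d_{G-e}(u,v)=\vareps_{G-e}(u)$, which is the first claim.

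For the consequences, summing $\vareps_G(u)\le\vareps_{G-e}(u)$ over all $u\in V(G)$ and dividing by $\C{V(G)}$ yields $\sigma_0(G)\le\sigma_0(G-e)$. Since eccentricities are nonnegative, the same inequality squared gives $\vareps_G^2(u)\le\vareps_{G-e}^2(u)$, and summing over $u\in V(G)$ yields $\sigma_1(G)\le\sigma_1(G-e)$. There is essentially no obstacle here — the statement is elementary — and the only subtlety worth a sentence is the disconnected case handled above; note also that the analogous claim is deliberately omitted for $\sigma_2$, since deleting $e$ removes the term $\vareps(u)\vareps(v)$ from the defining sum, so monotonicity can fail there. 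I would keep the written proof to just a few lines.
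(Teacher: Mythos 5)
Your proof is correct and is exactly the standard argument the paper relies on (the paper states this as an Observation without writing out a proof): edge deletion cannot shorten any distance, hence cannot decrease any eccentricity, and the inequalities for $\sigma_0$ and $\sigma_1$ follow by summing. Your remarks about the disconnected case and about why $\sigma_2$ is omitted are accurate and consistent with the paper's treatment.
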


Deleting an edge cannot reduce eccentricities, but unfortunately it can
increase the diameter, taking the graph out of the relevant domain.
Indeed, when $d\le n/2$, we show that trees generally do not achieve the maximum
for $\sigma_0$ or $\sigma_1$.

\begin{Example}\label{alldiam}
When $d=\FL{n/2}$, the cycle $C_n$ has diameter $d$, and all vertices have
eccentricity $d$.  Hence $\sigma_0 = d$ and $\sigma_1 = nd^2$, clearly
exceeding the values for trees listed in Lemma~\ref{treemax}.  Indeed, the
values $d$ and $nd^2$ are trivial upper bounds, so the cycle is extremal when
$d=\FL{n/2}$.

The cycle $C_n$ illustrates a broader phenomenon: $\sigma_0(G)=d$ and
$\sigma_1(G)=nd^2$ for any $n$-vertex vertex-transitive graph $G$ with diameter
$d$, attaining the upper bounds.  (A graph $G$ is \emph{vertex-transitive} if
for all $u,v\in V(G)$ there is an automorphism of $G$ mapping $u$ to $v$;
deleting $k$ disjoint edges from $K_{2k}$ yields an example with diameter $2$.)
Many vertex-transitive graphs are known.  The cartesian product of any two
vertex-transitive graphs is vertex-transitive, and its diameter is the sum of
the diameters of the factors.

To generalize $C_n$, define a \emph{$\rho$-stratified graph} to be a graph $F$
having distinguished vertices $x$ and $y$ such that for every $v\in V(F)$, the
distances in $F$ to $x$ and $y$ sum to $\rho$.  Define $Q_{F,k}$ from $2k$
cyclically arranged copies of $F$ with distinguished vertices $x_i$ and $y_i$
in the $i$th copy by merging the vertices $y_i$ and $x_{i+1}$ for all $i$,
where indices are taken modulo $2k$.  The distance from a vertex in $Q_{F,k}$
to the corresponding vertex in the opposite copy of $F$ is $\rho k$, and no
vertex is farther.  Thus although $Q_{F,k}$ is not vertex-transitive (unless
$F$ is a path, which makes $Q_{F,k}$ a cycle), every vertex in $Q_{F,k}$ has
eccentricity $\rho k$.  For the diameter $d$ and number of
vertices $n$ of $Q_{F,k}$, we have $d/n=\rho/(2|V(F)|-2)$.

To achieve a particular ratio $p/(2q)$ for $d/n$, where $q\ge p\ge2$, let $F$
consist of a a path $P$ with $p+1$ vertices plus $q-p$ extra vertices that
are each adjacent to two vertices of $P$ separated by distance $2$.  Thus
$\rho=p$ and $\C{V(F)}=q+1$.  As $p$ increases from $2$ to $q$, these ratios
rise from $1/q$ to $1/2$.  As $q$ increases from $p$, the ratios fall from
$1/2$ toward $0$.

In every such construction, every vertex has the same eccentricity, achieving
the upper bounds for $\sigma_0$ and $\sigma_1$ in Observation~\ref{diambd} and
exceeding the bounds for trees.
\end{Example}

There are no vertex-transitive graphs with $d>n/2$.  The proof also prevents
reaching the upper bound in Observation~\ref{diambd}, since it forbids all
vertices having eccentricity $d$.

\begin{Lemma}\label{diamn2}
Let $G$ be an $n$-vertex connected graph with diameter $d$.
If $d>n/2$, then $\sigma_0(G)<d$ and $\sigma_1(G)<nd^2$.
\end{Lemma}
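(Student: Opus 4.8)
The plan is to fix a diametric path $x_0x_1\cdots x_d$ in $G$, where $d_G(x_0,x_d)=d$, and to argue that not every vertex can have eccentricity $d$ because the path is ``too long'' relative to $n$. The key observation is that for any vertex $u$, the eccentricity $\vareps_G(u)$ is at least $\max\{d_G(u,x_0),d_G(u,x_d)\}$; moreover, since the path is a geodesic, for each $i$ we have $d_G(x_i,x_0)=i$ and $d_G(x_i,x_d)=d-i$, so the ``interior'' vertex $x_i$ with $1\le i\le d-1$ has $\max\{d_G(x_i,x_0),d_G(x_i,x_d)\}\ge\CL{d/2}$, which is strictly less than $d$ whenever $d\ge2$. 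Wait — that alone does not force eccentricity below $d$, since $x_i$ could be far from some other vertex. So the real argument must be more global.

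First I would set up a counting/averaging argument. For each vertex $u\in V(G)$, let $a(u)=d_G(u,x_0)$ and $b(u)=d_G(u,x_d)$. By the triangle inequality $a(u)+b(u)\ge d$, and also $|a(u)-b(u)|\le d$, with $a(u),b(u)\le d$. The point is that if $\vareps_G(u)=d$ for \emph{every} $u$, then in particular every vertex lies within distance $d$ of every other vertex, but the crucial constraint comes from looking at the vertices $x_0$ and $x_d$ themselves: every vertex $u$ satisfies $a(u)\le\vareps_G(x_0)=d$ and $b(u)\le\vareps_G(x_d)=d$, which is automatic, so I need to extract more. The real leverage: consider the function $i\mapsto a(u)$ restricted to neighbours, or better, partition $V(G)$ according to $a(u)$. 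Since $a$ takes values in $\{0,1,\ldots,d\}$ and $G$ is connected, each ``level set'' $L_j=\{u:a(u)=j\}$ is nonempty for $0\le j\le d$, giving $n\ge d+1$ vertices accounted for, but we need $d>n/2$, i.e. $n<2d$, i.e. $n\le 2d-1$, so there are at most $d-2$ vertices outside the path's $d+1$ vertices — actually the path has $d+1$ vertices and $n\le 2d-1$ leaves room for only $d-2$ extra vertices.

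The heart of the argument: I claim some vertex has eccentricity $<d$. Consider the two ``ends'' $x_0$ and $x_d$. If some vertex $w$ had $d_G(x_1,w)=d$, then since $d_G(x_0,w)\le d$ and $d_G(x_0,x_1)=1$ we get $d_G(x_0,w)\ge d-1$; combined with $d_G(x_d,w)\le d$ and the path structure one derives that $w$ must itself be an ``almost-endpoint.'' More cleanly: I would show that the set $A=\{u:d_G(u,x_0)\ge\CL{d/2}\}$ and $B=\{u:d_G(u,x_d)\ge\CL{d/2}\}$ together with a BFS-layer counting argument force $|A\cap B|$ small, and that a vertex in neither $A$ nor $B$ (close to both ends) has eccentricity $\le 2\FL{d/2}\le d$ with equality only in constrained cases; then pigeonhole on $n\le 2d-1$ produces a vertex $u$ with $a(u)+b(u)$ small AND no third vertex far away, yielding $\vareps_G(u)<d$. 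The $\sigma_0<d$ and $\sigma_1<nd^2$ conclusions then follow immediately from Observation~\ref{diambd}, whose equality case requires \emph{all} eccentricities equal to $d$ — and we have exhibited one that is smaller (note also that radius $<d$ here, so the common-value case would force that common value to be $d$, which we have ruled out).

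The main obstacle I anticipate is making the middle step rigorous: it is easy to see each individual interior path vertex has \emph{small distance to the two endpoints}, but eccentricity is a max over \emph{all} vertices, so one must rule out the possibility that every interior vertex is rescued by being far from some off-path vertex. With only $n-(d+1)\le d-2$ off-path vertices but $d-1$ interior path vertices (for $d\ge2$ these are comparable), a counting argument should show at least one interior path vertex $x_i$ cannot be at distance $d$ from any off-path vertex — because an off-path vertex $w$ at distance $d$ from $x_i$ would, via $d_G(w,x_0)\le d$ and $d_G(w,x_d)\le d$ and the additivity $d_G(w,x_0)+d_G(w,x_d)\ge d$, be forced to satisfy $d_G(w,x_i)\le\max\{i,d-i\}<d$, a contradiction. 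That last inequality is exactly the clean fact that kills the obstacle: $d_G(w,x_i)\le d_G(w,x_0)+d_G(x_0,x_i)$ is too weak, but using a shortest $w$–$x_i$ path together with concatenation through whichever of $x_0,x_d$ is nearer to $w$ along the geodesic gives the bound, provided $x_i$'s distances to the endpoints are $i$ and $d-i$ — which they are. Hence every off-path vertex is within distance $\max\{i,d-i\}$ of $x_i$ once $w$'s distances to $x_0,x_d$ are both at most $d$; combined with $x_i$ being within $\max\{i,d-i\}$ of all \emph{on-path} vertices, we conclude $\vareps_G(x_i)=\max\{i,d-i\}$, and choosing $i=\FL{d/2}$ gives $\vareps_G(x_i)=\CL{d/2}<d$ for $d\ge2$. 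That single vertex suffices, and no delicate pigeonhole on $n$ versus $d$ is even needed — the hypothesis $d>n/2$ is used only indirectly (or perhaps not at all for this particular conclusion), which I would double-check against the authors' intent.
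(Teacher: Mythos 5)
Your final, load-bearing claim --- that every vertex $w$ satisfies $d_G(w,x_i)\le\max\{i,d-i\}$, so that the vertex $x_i$ with $i=\FL{d/2}$ on a diametric path has eccentricity $\CL{d/2}<d$ --- is false for general graphs. Take $G=C_6$ with vertices $1,\dots,6$ in cyclic order: $d=3$, the path $\la 1,2,3,4\ra$ is diametric, and for $x_1=2$ we have $\max\{i,d-i\}=2$, yet $d_G(2,5)=3$, so $\vareps_G(2)=3=d$. Your proposed derivation ($d_G(w,x_0)\le d$, $d_G(w,x_d)\le d$, $d_G(w,x_0)+d_G(w,x_d)\ge d$, then concatenate through the nearer endpoint) only yields $d_G(w,x_i)\le \min\{d_G(w,x_0)+i,\; d_G(w,x_d)+d-i\}$, which can be as large as $d$; there is no way to push it down to $\max\{i,d-i\}$. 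The fact you are relying on is a property of \emph{trees} (where eccentricities along a diametric path are exactly $d,d-1,\dots,d-1,d$), not of graphs; indeed the paper's Theorem~2.11 has to devote an entire case to diametric-path vertices with ``excess eccentricity.'' Your own closing remark is the decisive symptom: the argument never uses $d>n/2$, but the statement is false without that hypothesis --- Example~2.4 exhibits $C_n$ (and vertex-transitive graphs generally) with $d=\FL{n/2}$ in which \emph{every} vertex has eccentricity $d$, so $\sigma_0=d$ and $\sigma_1=nd^2$ exactly. A proof that does not invoke $d>n/2$ cannot be correct.

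The paper's proof runs along entirely different lines and is very short: if $\sigma_0(G)=d$ or $\sigma_1(G)=nd^2$, then every vertex has eccentricity $d$; a cut-vertex always has strictly smaller eccentricity than a vertex at maximum distance from it, so $G$ would have to be $2$-connected; by Menger's Theorem any two vertices then lie on a common cycle, which has length at most $n$, so any two vertices are at distance at most $n/2$ and $d\le n/2$. If you want to salvage a ``local'' argument in your style, you would have to show that \emph{some} vertex has eccentricity below $d$, and the only structural lever available is connectivity --- which is exactly where the hypothesis $d>n/2$ enters.
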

\begin{proof}
If $\sigma_0=d$ or $\sigma_1=nd^2$, then every vertex must have eccentricity
$d$.  If $G$ has a cut-vertex $v$, then $v$ has smaller eccentricity than a
vertex at maximum distance from $v$, a contradiction.  Hence $G$ must be
$2$-connected.  Now Menger's Theorem implies that any two vertices are
connected by two internally-disjoint paths and hence lie on a cycle.  The cycle
has length at most $n$.  Hence any two vertices are connected by a path of
length at most $n/2$, so $d\le n/2$.
\end{proof}

\begin{Example}\label{treebad}
Although we cannot have eccentricity $d$ at all vertices when $d>n/2$, we can
nevertheless show that trees are not optimal when $d$ is a bit larger than
$n/2$.  For simplicity, we confine our attention to $\sigma_0$.  A similar
argument yields an analogous result for $\sigma_1$.

For $k\le n/2$, let $H_{n,k}$ be the $n$-vertex graph obtained from the cycle
$C_{2k}$ by growing a path from one vertex through $n-2k$ new vertices.  Since
$\diam(H_{n,k})=n-k$, we will compare $\sigma_0(H_{n,k})$ with $\sigma_0(T)$
for $T\in \bT^{n,n-k}$.  We will consider $n\sigma_0$, the sum of the
eccentricities.  From Lemma~\ref{treemax}, $n\sigma_0(T)=nd-\FL{d^2/4}$,
where $d=n-k$.

Let $r=n-2k$, so $d=k+r$.  We consider only $r\le n/3$ and thus $k\ge n/3$ in
order to have a consistent formula for $\sigma_0(H_{n,k})$.  Eccentricities
of vertices along the added path decrease from $k+r$ at the leaf to $k$ at the
endpoint $x$ on the cycle.  The eccentricity of the vertex $y$ opposite $x$ on
the cycle is $k+r$, and the value of the eccentricity decreases moving away
from $y$ in both directions until eccentricity $k$ is reached.  The condition
$k\ge r$ guarantees that this happens at or before $x$ on the cycle.  Each
vertex contributes at least $k$,
and we have $n\sigma_0(H_{n,k})=nk+3\CH{r+1}2-r$.

To compare with the corresponding tree, we write the parameters in terms of
$n$ and $r$.  Since $k=(n-r)/2$, we have $d=(n+r)/2$.  Ignoring the floor
function, $n\sigma_0(T)=\FR{n^2}2+\FR{nr}2-\FR{(n+r)^2}{16}$, and
$n\sigma_0(H_{n,k})=\FR{n^2}2-\FR{nr}2+\FR{3r^2+r}2$.  We thus seek
$r$ between $0$ and $n/3$ such that
$$
\FR{3r^2}2+\FR r2-\FR{nr}2 > \FR{nr}2-\FR{(n+r)^2}{16}.
$$
When we parametrize the problem by setting $r=\alpha n$, the inequality
simplifies to
$$
25\alpha^2-14\alpha+1>\FR{-8\alpha}n.
$$
It suffices to choose $\alpha$ to make the left side positive, and for large
$n$ we cannot do much better.  Solving $25\alpha^2-14\alpha+1>0$, we find that
$H_{n,k}$ has larger average eccentricity than trees in $\bT^{n,n-k}$ when
$0\le\alpha<(7-2\sqrt 6)/25\approx .084$, for sufficiently large $n$.
That is, since the diameter is $(n+r)/2$, when $.5n\le d<.542n$ the non-tree
construction has larger average eccentricity.
\end{Example}

When $d=n-1$, the path $P_n$ is the only example with diameter $d$ and hence is
optimal.  Thus there is some threshold for $d$ in terms of $n$ so that the
trees in $\bT^{n,d}$ are optimal.

\begin{Problem}
Find the least value $d_n$ such that when $d>d_n$, the $n$-vertex graphs with
diameter $d$ having the largest values of $\sigma_0$ and/or $\sigma_1$
are trees.
\end{Problem}

Next we consider upper bounds on $\sigma_2$ for $n$-vertex connected graphs with
diameter $d$.  We restrict to $d\ge2$, since when $d=1$ the only instance is
$K_n$.  Since each edge contributes and we are not stratifying by the number of
edges, we want to generate many edges joining vertices of eccentricity $d$
while maintaining diameter $d$.  Hence we obtain upper bounds by bounding the
number of edges in an $n$-vertex graph with diameter $d$.  This problem was
solved by Ore~\cite{O} in 1968 by determining the structure of
diameter-critical graphs.  An easy direct proof was published by Qiao and
Zhan~\cite{QZ}.

\begin{Lemma}[{\rm \cite{O,QZ}}]\label{ore}
For $d\ge2$, the maximum number of edges in a simple $n$-vertex graph $G$ with
diameter $d$ is $d+(n-d-1)(n-d+4)/2$.  Equality holds if and only if $G$ is
formed from a path $P$ of length $d$ by adding $n-d-1$ vertices that form a
clique and are each adjacent to the first three or the last three among some
three or four consecutive vertices on $P$.
\end{Lemma}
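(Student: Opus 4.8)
The plan is to work with a diametric path and bound the edges in three groups.  Let $G$ be a connected $n$-vertex graph with diameter $d\ge2$, and let $P=\la\VEC v0d\ra$ be a diametric path; since a subpath of a geodesic is a geodesic, $P$ is an induced path, so exactly $d$ edges of $G$ lie inside $V(P)$.  Put $S=V(G)\setminus V(P)$, so $\C S=n-d-1$.  Every edge of $G$ lies inside $V(P)$, inside $S$, or between $S$ and $V(P)$; the first type contributes $d$ edges and the second at most $\binom{n-d-1}{2}$.  For the third type the key point is that each $w\in S$ has at most three neighbors on $P$: if $w$ is adjacent to $v_i$ and $v_j$ with $i<j$, then $v_i,w,v_j$ is a walk of length $2$, so $j-i=d_G(v_i,v_j)\le2$, forcing the neighbors of $w$ on $P$ to lie among three consecutive vertices $v_i,v_{i+1},v_{i+2}$.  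Hence at most $3(n-d-1)$ edges join $S$ to $V(P)$, and
\[
\C{E(G)}\le d+\binom{n-d-1}{2}+3(n-d-1)=d+\frac{(n-d-1)(n-d+4)}{2},
\]
which is the asserted bound.

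For the equality case, the plan is first to read off the local structure and then to pin down the global arrangement using that $d_G(v_0,v_d)=d$.  Equality in the bound forces equality in each of the three estimates, so $S$ is a clique and each $w\in S$ has exactly three neighbors on $P$, necessarily of the form $T_w=\{v_{i_w},v_{i_w+1},v_{i_w+2}\}$.  Now suppose $w,w'\in S$ with $i_w\le i_{w'}$; since $ww'\in E(G)$, the walk
\[
v_0,v_1,\dots,v_{i_w},\,w,\,w',\,v_{i_{w'}+2},v_{i_{w'}+3},\dots,v_d
\]
joins $v_0$ to $v_d$ and has length $d+1-(i_{w'}-i_w)$, so $d_G(v_0,v_d)=d$ forces $i_{w'}-i_w\le1$.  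Thus the left endpoints $i_w$ take at most two consecutive values, the triples $T_w$ all lie within a block of three or four consecutive vertices of $P$, and each $w\in S$ is joined to the first three or the last three vertices of that block, which is precisely the family in the statement.

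Finally one checks that every graph built this way has $n$ vertices, exactly $d+\binom{n-d-1}{2}+3(n-d-1)$ edges, and diameter $d$; the counts are immediate.  For the diameter, set $\phi(v_j)=j$ for $0\le j\le d$ and $\phi(w)=i_w+1$ for $w\in S$; the bound $\C{i_w-i_{w'}}\le1$ makes $\phi$ change by at most $1$ along every edge, and since $\phi(v_0)=0$ and $\phi(v_d)=d$, every path from $v_0$ to $v_d$ has at least $d$ edges, whence $d_G(v_0,v_d)=d$.  On the other hand each $w\in S$ is adjacent to $v_{i_w+1}$ with $1\le i_w+1\le d-1$, so $\vareps_G(w)\le1+\max\{i_w+1,\,d-i_w-1\}\le d$; distances within $S$ are at most $2\le d$, and distances within $V(P)$ are at most $d$, so $\diam(G)=d$.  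I expect the main obstacle to be the middle step: extracting the ``block of three or four consecutive vertices'' description requires choosing exactly the right $v_0$-to-$v_d$ walk to play against the diameter constraint, and one must be careful that the resulting condition is exactly ``the first three or the last three of the block''; the counting bound, by contrast, is routine once the ``three consecutive neighbors'' observation is in hand.
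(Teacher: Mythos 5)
Your argument is correct: the decomposition into path edges, $S$-to-$P$ edges (at most three per outside vertex, necessarily consecutive on an induced diametric path), and clique edges inside $S$, followed by the walk $v_0,\dots,v_{i_w},w,w',v_{i_{w'}+2},\dots,v_d$ to force $|i_w-i_{w'}|\le 1$ in the equality case, is exactly the standard "simple proof" of Qiao and Zhan \cite{QZ} that the paper cites; the paper itself states the lemma without proof. Your verification of the converse via the potential function $\phi$ is a clean way to confirm $d_G(v_0,v_d)=d$, and the whole write-up is sound, including the degenerate cases $|S|\le 1$ and $d=2$.
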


It can be convenient to rewrite the bound $d+(n-d-1)(n-d+4)/2$ as
$\CH{n-d}2+(2n-d-2)$ or $\CH{n-d-1}2+(3n-2d-3)$.
Lemma~\ref{ore} and Observation~\ref{diambd} together give an immediate upper
bound on $\sigma_2(G)$ in this class, and it is asymptotically optimal.

\begin{Example}\label{Bnd}
For diameter $d$ with $d\ge2$, start with a path having vertices $\VEC v0d$
in order.  Construct $B_{n,d}$ by adding $n-d-1$ pairwise-adjacent vertices and
making them all also adjacent to $v_0$ and $v_1$.  Construct $B'_{n,d}$ from
the path by adding $n-d-1$ pairwise-adjacent vertices and making them all also
adjacent to $v_0$, $v_1$, and $v_2$.  Note that $B_{n,d}$ can also be obtained
by growing a path from one vertex of a complete graph; from this viewpoint it
was called a \emph{kite} in~\cite{DMCC}.  We use the notation $B_{n,d}$ to
emphasize the diameter and the relationship to \emph{brooms}, which are trees
obtained by adding leaves at one end of a path.

Both $B_{n,d}$ and $B'_{n,d}$ have diameter $d$.  In $B'_{n,d}$ there are
$n-d-1$ more edges, but its added vertices have eccentricity $d-1$ instead of
$d$.  With $g_2(d)$ defined as before Lemma~\ref{treemax},
\begin{align*}
\sigma_2(B_{n,d})&= g_2(d)+(n-d-1)d(d-1)+\CH{n-d}2d^2\qquad\mbox{for~$d\ge2$},\\
\sigma_2(B'_{n,d})&=
g_2(d)+(n-d-1)d(d-1)+\CH{n-d}2(d-1)^2+(n\!-\!d\!-\!1)(d\!-\!1)(d\!-\!2)
\qquad\mbox{for~$d\ge4$}.
\end{align*}
Since eccentricity increases when the middle of a diametric path is passed,
for $d\le3$ the formula for $\sigma_2(B'_{n,d})$ must be adjusted.
For $d=3$ the last term is replaced by $(n-d-1)(d-1)^2$, and for $d=2$ it
is replaced by $(n-d-1)(d-1)d$.  Those adjustments are not important, because
$\sigma_2(B_{n,3})>\sigma_2(B'_{n,3})$ for all $n$, and the extremal graph when
$d=2$ is clearly the graph obtained by deleting a perfect matching from $K_n$
(when $n$ is odd, one vertex loses two edges).

For $4\le d\le n-2$, we compute
\begin{align*}
\sigma_2(B_{n,d})-\sigma_2(B'_{n,d})
&= \FR{n-d-1}2[(n-d)(2d-1)-2(d-1)(d-2)]\\
&= \FR{n-d-1}2[(n-d)+(2d-2)(n-2d+2)].
\end{align*}
When $d\le(n+2)/2$, the second factor is positive and $\sigma_2(B_{n,d})$ is
larger, but when $d\ge(n+3)/2$ it is negative and $\sigma_2(B'_{n,d})$ is
larger.  This is natural, since the advantage of higher eccentricity for the
added vertices diminishes as $d$ grows and there are fewer of them.
\end{Example}

We combine Example~\ref{Bnd} with Lemma~\ref{ore} to obtain the asymptotics
of the solution.

\begin{Theorem}
For any $n$-vertex graph connected $G$ with diameter $d$,
$$
\sigma_2(G)\le \CH{n-d}2d^2+2(n-d-1)d^2+d^3.
$$
This upper bound is asymptotically sharp when $d$ is bounded by any constant
fraction of $n$, as shown by $B_{n,d}$ and $B'_{n,d}$ in Example~\ref{Bnd}.
\end{Theorem}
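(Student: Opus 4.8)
The plan is to get the inequality essentially for free — combine the edge-wise bound $\varepsilon_G(u)\varepsilon_G(v)\le d^2$ with Ore's count of edges in a diameter-$d$ graph (Lemma~\ref{ore}) — and then verify asymptotic sharpness by extracting the dominant term of $\sigma_2(B_{n,d})$. For the inequality: since $d=\diam(G)$, every vertex of $G$ has eccentricity at most $d$, so each edge $uv$ contributes at most $d^2$ to $\sigma_2(G)$, giving $\sigma_2(G)\le d^2\,|E(G)|$. By Lemma~\ref{ore} we have $|E(G)|\le d+(n-d-1)(n-d+4)/2$, which, as recorded just after that lemma, equals $\CH{n-d}{2}+(2n-d-2)$; multiplying by $d^2$ and using $(2n-d-2)d^2=2(n-d-1)d^2+d^3$ produces exactly the claimed right-hand side. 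The only things to check here are the elementary rewriting of Ore's bound and this last identity.

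For asymptotic sharpness, first apply the inequality just proved to $G=B_{n,d}$, which has diameter $d$ and is defined whenever $n-d-1\ge1$ (hence for all large $n$ when $d\le cn$ with $c<1$ fixed); this gives $\sigma_2(B_{n,d})\le U$, where $U$ denotes the upper bound. In the other direction, among the edges of $B_{n,d}$ the $\CH{n-d-1}{2}$ edges inside the clique of added vertices and the $n-d-1$ edges joining those vertices to $v_0$ all join two vertices of eccentricity $d$ (a short check of distances in $B_{n,d}$ shows $\varepsilon(v_0)=d$ and each added vertex also has eccentricity $d$); since $\CH{n-d-1}{2}+(n-d-1)=\CH{n-d}{2}$ and every remaining edge contribution is nonnegative, $\sigma_2(B_{n,d})\ge\CH{n-d}{2}d^2$. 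Hence
$$
1 \ge \frac{\sigma_2(B_{n,d})}{U} \ge \frac{\CH{n-d}{2}d^2}{\CH{n-d}{2}d^2+2(n-d-1)d^2+d^3} = \left(1+\frac{2(n-d-1)+d}{\CH{n-d}{2}}\right)^{-1}.
$$
When $d\le cn$ with $c<1$ fixed, $n-d\ge(1-c)n$, so $\CH{n-d}{2}$ grows quadratically in $n$ while $2(n-d-1)+d<2n$ grows only linearly; the displayed fraction therefore tends to $0$, so $\sigma_2(B_{n,d})/U\to1$, which is the asserted sharpness. The same conclusion holds via $B'_{n,d}$ in the range $d\ge(n+3)/2$ where Example~\ref{Bnd} shows it is the larger of the two, since its leading term $\CH{n-d}{2}(d-1)^2$ agrees with $\CH{n-d}{2}d^2$ up to a factor $1+O(1/d)$.

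No step here is a genuine obstacle; it is all bookkeeping. The point deserving a moment's thought is the final one, where the hypothesis that $d$ is a constant fraction of $n$ is used precisely to force $n-d=\Theta(n)$, which is what makes the term $\CH{n-d}{2}d^2$ dominate the $O(nd^2)$ correction. If $d$ were as large as $n-O(1)$ the argument would fail: $\CH{n-d}{2}$ would be bounded while $U$ would still be of order $d^3$, and $\sigma_2(B_{n,d})/U$ would tend to $7/12$ rather than $1$ (since $g_2(d)\sim \tfrac{7}{12}d^3$), reflecting — as noted after Example~\ref{Bnd} — that for such $d$ the extremal structure changes and too few vertices of eccentricity $d$ remain available to fill out the bound.
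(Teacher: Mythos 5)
Your proof is correct and follows essentially the same route as the paper: the upper bound comes from multiplying Ore's edge count (Lemma~\ref{ore}, rewritten with the $\CH{n-d}{2}$ term) by the per-edge bound $d^2$ from Observation~\ref{diambd}, and sharpness comes from the fact that $B_{n,d}$ already realizes the dominant term $\CH{n-d}{2}d^2$ while the remaining terms are lower order when $n-d=\Theta(n)$. Your ratio argument for sharpness is slightly more self-contained than the paper's (which subtracts the exact formula from Example~\ref{Bnd}), but it is the same idea.
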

\begin{proof}
We rewrite the bound in Lemma~\ref{ore} by extracting the quantity $\CH{n-d}2$,
then we multiply by $d^2$ to apply Observation~\ref{diambd}.  This establishes
the upper bound.

The formula featuring $\CH{n-d}2$ facilitates comparison with Example~\ref{Bnd}.
Recall that $g_2(d)=(7/12)d^3+O(d)$.  In comparing the leading terms of the
upper bound here with the values for the constructions in Example~\ref{Bnd},
the upper bound is larger by about $(n-d-1)d^2+(5/12)d^3$.  Since the leading
behavior is quartic in $d$ when $d$ is a constant fraction of $n$, the
constructions in Example~\ref{Bnd} are asymptotically optimal in that range.
\end{proof}

\begin{Corollary}\label{s2alln}
Over $n$-vertex connected graphs, the maximum of $\sigma_2$ is $n^4/32+O(n^3)$.
\end{Corollary}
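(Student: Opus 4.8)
The plan is to read off both matching bounds from results already in the excerpt: the Theorem above controls $\sigma_2$ for every value of the diameter, and the construction $B_{n,d}$ of Example~\ref{Bnd} supplies a lower bound of the same order. The point to exploit is that, treating $d$ as a free parameter in $\{2,\dots,n-1\}$, the dominant term $\CH{n-d}2d^2$ behaves like $\tfrac12[(n-d)d]^2$, and the product $(n-d)d$ is maximized near $d=n/2$, where it is about $n^2/4$; hence the leading behavior of the optimum is $\tfrac12\cdot\tfrac{n^4}{16}=\tfrac{n^4}{32}$. So the whole argument is an optimization over $d$ plus bookkeeping of lower-order terms.

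For the upper bound, I would first dispose of $d=1$, where $G=K_n$ and $\sigma_2(G)=\CH n2=O(n^2)$, so we may assume $d\ge2$ and invoke the Theorem. In the bound $\CH{n-d}2d^2+2(n-d-1)d^2+d^3$, the last two summands are each $O(n^3)$ because $d\le n$, while $\CH{n-d}2d^2\le\tfrac12(n-d)^2d^2=\tfrac12\big[(n-d)d\big]^2\le\tfrac12(n^2/4)^2=n^4/32$ by the arithmetic–geometric mean inequality applied to $n-d$ and $d$ (whose sum is $n$). Adding up, $\sigma_2(G)\le n^4/32+O(n^3)$ for every connected $n$-vertex graph $G$, which gives the upper half of the claim uniformly in $d$.

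For the matching lower bound, I would evaluate $\sigma_2(B_{n,d})$ from Example~\ref{Bnd} at $d=\CL{n/2}$, so that $n-d=\FL{n/2}$ and $B_{n,d}$ is well defined for $n$ large. Then $\CH{n-d}2=\tfrac18n^2+O(n)$ and $d^2=\tfrac14n^2+O(n)$, so the term $\CH{n-d}2d^2$ equals $\tfrac{n^4}{32}+O(n^3)$, while $g_2(d)=O(n^3)$ and $(n-d-1)d(d-1)=O(n^3)$ contribute only lower-order corrections. Hence $\sigma_2(B_{n,d})=n^4/32+O(n^3)$, and combining with the upper bound yields the stated asymptotics. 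I do not expect a real obstacle here: the only care needed is to confirm that the optimization over $d$ genuinely peaks near $d=n/2$ and that the floor/ceiling rounding together with all subleading terms are absorbed into the $O(n^3)$ error; if one wanted the exact maximizer rather than the asymptotics, one could instead use the explicit formulas and the sign analysis for $\sigma_2(B_{n,d})-\sigma_2(B'_{n,d})$ from Example~\ref{Bnd}, but that refinement is unnecessary for the corollary as stated.
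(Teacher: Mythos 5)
Your proposal is correct and follows essentially the same route as the paper: both take the edge bound of Lemma~\ref{ore} multiplied by $d^2$ (equivalently, the upper bound of the preceding Theorem), optimize over $d$ near $n/2$ to get $n^4/32+O(n^3)$, and match it with $B_{n,d}$ from Example~\ref{Bnd}. Your AM--GM justification of the maximizing $d$ and the explicit treatment of $d=1$ are slightly more careful than the paper's terse one-line optimization, but the argument is the same.
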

\begin{proof}
Using the expression in Lemma~\ref{ore}, the upper bound on $\sigma_2$ for
$n$-vertex graphs with diameter $d$ is $d^3+d^2(n-d-1)(n-d+4)/2$.  Choosing
$d=\CL{n/2}+2$ to maximize the bound yields $\sigma_2\le n^4/32+O(n^3)$, and
with that value of $d$ this is achieved by $B_{n,d}$ or $B'_{n,d}$.
\end{proof}

Our goal in the remainder of this section is to show that the maximum of
$\sigma_2$ over $n$-vertex graphs with diameter $d$ is always achieved by
$B_{n,d}$ or $B'_{n,d}$ when $d\ge3$.  We begin with a family of constructions
that move from $B_{n,d}$ to $B'_{n,d}$ by adding one edge with each step.

\begin{Example}  \label{BT}
For $0\le t\le n-d-1$, let $B_{n,d,t}$ be the graph obtained from $B_{n,d}$ by
making $t$ of the $n-d-1$ vertices in the complete subgraph outside
$\{\VEC v0d\}$ adjacent to $v_2$.  Note that $B_{n,d,0}$ is $B_{n,d}$ and
$B_{n,d,n-d-1}$ is $B'_{n,d}$.  We move from $B_{n,d,t}$ to $B_{n,d,t+1}$ by
adding one edge $xv_2$, which contributes $(d-1)(d-2)$ to the sum but reduces
the eccentricity of $x$ from $d$ to $d-1$.  This costs $d$ (or $d-1$) from the
contribution for each edge from $x$ to a neighbor with eccentricity $d$ (or
$d-1$, respectively).  In $B_{n,d,t}$, vertex $x$ has $t+1$ neighbors with
eccentricity $d-1$ and $n-d-t-1$ neighbors with eccentricity $d$.  Hence
\begin{align*}
\sigma(B_{n,d,t+1})-\sigma(B_{n,d,t})
&=(d-1)(d-2)-(t+1)(d-1)-(n-d-1-t)d\\
&= 2d^2-(n+3)d+t+3.
\end{align*}
For fixed $n$ and $d$, let $f(t)$ denote this difference.
If $d\le(n+2)/2$, then $f(t)<0$ for $t\le n-d-2$;
if $d\ge(n+3)/2$, then $f(t)>0$ for $t\ge0$.
Thus $\sigma_2$ changes monotonically as $B_{n,d,t}$ runs from $B_{n,d}$ to
$B'_{n,d}$, with $\sigma_2(B_{n,d})>\sigma_2(B'_{n,d})$ when $d\le(n+2)/2$ and
$\sigma_2(B_{n,d})<\sigma_2(B'_{n,d})$ when $d\ge(n+3)/2$, as computed
explicitly in Example~\ref{Bnd}.

The monotonicity property implies that always
$\sigma_2(B_{n,d,t})\le \max\{\sigma_2(B_{n,d}),\sigma_2(B'_{n,d})\}$.
\end{Example}

In order to show that $\sigma_2(G)$ for an $n$-vertex graph $G$ with
diameter $d$ is bounded by $\sigma_2(B_{n,d,t})$ for some $t$ and hence
by $\sigma_2(B_{n,d})$ or $\sigma_2(B'_{n,d})$, we will need several lemmas.
The first is essentially Lemma 4 in \cite{HHMRD}.  We include a proof for
completeness.

\begin{Lemma}\label{fact1}
Let $G$ be an $n$-vertex graph with diameter $d$, where $n>d\ge3$, and
let $P$ be a diametric path in $G$.  Let $u$ be a vertex of $P$ such that
$\eps(u)$ exceeds the maximum distance $\ell$ from $u$ to an endpoint of $P$,
and let $v$ be a vertex at distance $\eps(u)$ from $u$.  Let $P'$ be a path of
length $\eps(u)$ from $v$ to $u$, with vertices $\VEC w1{\eps(u)+1}$ in order
such that $v=w_1$ and $u=w_{\eps(u)+1}$.  Under these conditions,

(a) $\VEC w1{\eps(u)-\ell}$ do not belong to $P$.

(b) $\VEC w1{\eps(u)-\ell-1}$ have no neighbor on $P$, and the only neighbor
$w_{\eps(u)-\ell}$ can have on $P$ is one endpoint of $P$ most distant from $u$.
\end{Lemma}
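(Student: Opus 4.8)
The plan is to put coordinates on the diametric path and then run three short triangle-inequality arguments, one for~(a) and two for the two assertions of~(b). First I would fix notation: let $a$ and $b$ be the endpoints of $P$, and suppose $u$ lies at distance $i$ from $a$ along $P$, so that $\ell=\max\{i,d-i\}$. The crucial preliminary observation is that, because $P$ is a diametric path, it is a shortest path between $a$ and $b$, hence a geodesic; therefore for every vertex $z$ on $P$ the distance $d_G(z,u)$ equals the number of edges of $P$ between $z$ and $u$. That number is at most $\max\{i,d-i\}=\ell$, and it equals $\ell$ only when $z$ is an endpoint of $P$ at distance $\ell$ from $u$ (a one-line check of which positions $p$ on $P$ satisfy $|p-i|=\ell$, using $0\le p\le d$). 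Two more trivial facts: $d_G(v,u)=\eps(u)$ exactly, since $v$ realizes the eccentricity of $u$; and the initial segment of $P'$ from $v=w_1$ to $w_j$ has length $j-1$, so $d_G(v,w_j)\le j-1$. I would also note that the hypothesis $\eps(u)>\ell$ makes $\eps(u)-\ell\ge1$, so the index ranges in~(a) and~(b) are nonempty, and it forces $u$ to be an interior vertex of $P$ (otherwise $i\in\{0,d\}$ gives $\ell=d\ge\eps(u)$).

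Next I would do the three arguments. For~(a): if some $w_j$ with $1\le j\le\eps(u)-\ell$ lay on $P$, then concatenating the segment of $P'$ from $v$ to $w_j$ with the segment of $P$ from $w_j$ to $u$ would give $\eps(u)=d_G(v,u)\le(j-1)+\ell\le\eps(u)-1$, a contradiction; hence $\VEC w1{\eps(u)-\ell}$ avoid $P$. For the first assertion of~(b): if some $w_j$ with $1\le j\le\eps(u)-\ell-1$ had a neighbor $z$ on $P$, then $d_G(v,z)\le j$, and so $\eps(u)=d_G(v,u)\le d_G(v,z)+d_G(z,u)\le j+\ell\le\eps(u)-1$, again a contradiction. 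For the second assertion of~(b), take $j=\eps(u)-\ell$ and suppose $w_j$ has a neighbor $z$ on $P$; then $d_G(v,z)\le j$ gives $\eps(u)\le j+d_G(z,u)=\eps(u)-\ell+d_G(z,u)$, whence $d_G(z,u)\ge\ell$, while $z\in V(P)$ forces $d_G(z,u)\le\ell$. Thus $d_G(z,u)=\ell$, which by the preliminary observation identifies $z$ as an endpoint of $P$ most distant from $u$.

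I do not expect a genuine obstacle here: everything reduces to the triangle inequality together with the fact that a diametric path is a geodesic. The one place that needs a little care is the preliminary observation pinning down which vertices of $P$ attain distance exactly $\ell$ from $u$ — this is where the geodesic property is really used, and it is precisely what forces $z$ to be a farthest endpoint in the last step. I would additionally sanity-check the boundary case $\eps(u)=\ell+1$, where the first assertion of~(b) is vacuous and the second concerns $w_1=v$ itself, to confirm the index bookkeeping is consistent; the hypotheses $n>d\ge3$ are not actually needed for this lemma but are presumably carried along for the application.
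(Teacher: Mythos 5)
Your argument is essentially the paper's: both proofs use that $P$ is a geodesic, so $d_G(z,u)\le\ell$ for $z\in V(P)$ with equality only at a farthest endpoint, and then run the triangle inequality through $w_j$ to contradict $d_G(u,v)=\eps(u)$. The computations for (a) and for the first assertion of (b) match the paper's exactly, and your identification of a neighbor of $w_{\eps(u)-\ell}$ as a farthest endpoint is the same argument.

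One small point is missing, and it is exactly where you assert that the hypothesis $d\ge3$ is not needed. The conclusion of (b) is that $w_{\eps(u)-\ell}$ has at most \emph{one} neighbor on $P$ (this is how the lemma is used later, to conclude $d_1(w_{r^*})\le1$). When $u$ is the midpoint of $P$ with $d$ even, \emph{both} endpoints of $P$ are at distance $\ell$ from $u$, and your argument only shows that every neighbor of $w_{\eps(u)-\ell}$ on $P$ is such an endpoint; it does not exclude adjacency to both. The paper closes this with the observation that if one vertex were adjacent to both endpoints of $P$, those endpoints would be at distance $2$, contradicting $d\ge3$. So $d\ge3$ is in fact used, and you should add that one sentence.
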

\begin{proof}
If some $w_i$ with $i\le \eps(u)-\ell$ lies on $P$, then $G$ has a path from
$u$ to $v$ through $w_i$ with length at most $\ell+\eps(u)-\ell-1$,
contradicting $d_G(u,v)=\eps(u)$.  If $i<\eps(u)-\ell$ and $w_i$ has a neighbor
on $P$, then the same computation applies.  When $i=\eps(u)-\ell$, to avoid
this argument the neighbor of $w_i$ on $P$ must be at distance $\ell$ from $u$,
which makes it an endpoint of $P$.  Since $d\ge3$, when both endpoints of $P$
have distance $\ell$ from $u$ it is not possible for $v$ to be adjacent to both
of them.
\end{proof}

Let $P$ be an arbitrary diametric path in the graph $G$.
For $w\notin V(P)$, let $d_1(w)$ be the number of neighbors of $w$ on $P$,
and let $d_2(w)$ be the number of neighbors of $w$ outside $P$.

\begin{Theorem}\label{main}
If $n>d\ge3$, then $\sigma_2$ is maximized over $n$-vertex graphs with diameter
$d$ by $B_{n,d}$ (when $d\le(n+2)/2$) or $B'_{n,d}$ (when $d\ge(n+3)/2$).
\end{Theorem}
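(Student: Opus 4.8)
The plan is to show that any $n$-vertex graph $G$ with diameter $d$ and $n > d \ge 3$ satisfies $\sigma_2(G) \le \sigma_2(B_{n,d,t})$ for some $t$, which by the monotonicity established in Example~\ref{BT} forces $\sigma_2(G) \le \max\{\sigma_2(B_{n,d}), \sigma_2(B'_{n,d})\}$; Example~\ref{Bnd} then identifies which of the two wins according to the threshold $(n+2)/2$ versus $(n+3)/2$. First I would fix a diametric path $P = \la \VEC v0d \ra$ in $G$ and classify the vertices outside $P$. By Observation~\ref{diambd}, each edge contributes at most $d^2$ to $\sigma_2$, so we want to understand how many vertices off $P$ can have eccentricity $d$ and how they can attach to $P$ and to each other. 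The contribution of the diametric path itself is at most $g_2(d)$, with equality iff the eccentricities along $P$ are exactly $d, d-1, \ldots, d-1, d$; any deviation only helps the bound. The vertices off $P$ split into those of eccentricity $d$ and those of eccentricity at most $d-1$, and $B_{n,d,t}$ is precisely the configuration where $n-d-1-t$ of them have eccentricity $d$, $t$ have eccentricity $d-1$, all $n-d-1$ form a clique, the eccentricity-$d$ ones attach only to $v_0, v_1$ and the eccentricity-$(d-1)$ ones attach also to $v_2$.

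The heart of the argument is a local-exchange or charging scheme. For a vertex $w \notin V(P)$, write its contribution to $\sigma_2$ as a sum over its incident edges, split using $d_1(w)$ (neighbors on $P$) and $d_2(w)$ (neighbors off $P$). Lemma~\ref{fact1} is the key structural tool: if $w$ has eccentricity $d$ (so $\eps(w)$ exceeds its distance to the nearest endpoint of $P$, because $P$ has only $d+1$ vertices and $w \notin P$), then a shortest path witnessing $\eps(w) = d$ must leave $P$ for its first $d - \ell$ steps and those initial vertices have essentially no neighbors on $P$ — in particular this pins down how close to the ``ends'' of $P$ a vertex of eccentricity $d$ can attach, and shows such vertices behave like the added vertices of $B_{n,d}$. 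I would argue that (i) the subgraph induced on $V(G) \setminus V(P)$ can be assumed complete, since adding edges among off-path vertices cannot decrease any eccentricity below $d$ (Observation~\ref{l0} in reverse — adding edges within the non-path part keeps the diameter at $d$ because $P$ is untouched) while only increasing $\sigma_2$; (ii) each off-path vertex of eccentricity $d$ can be assumed adjacent to $v_0$ and $v_1$ only among $V(P)$, and each of eccentricity $d-1$ adjacent to $v_0, v_1, v_2$ only — again by an edge-addition/relocation argument that weakly increases $\sigma_2$ while respecting Lemma~\ref{fact1}'s constraints and not decreasing the diameter; and (iii) the eccentricities along $P$ may be assumed to be exactly $d, d-1, \ldots, d-1, d$. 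After these normalizations $G$ is exactly some $B_{n,d,t}$ with $t$ the number of off-path vertices made adjacent to $v_2$, and the edge-count cannot exceed that of $B_{n,d}$ or $B'_{n,d}$ by Lemma~\ref{ore}, so all edges are accounted for.

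The main obstacle will be step (ii): justifying rigorously that relocating the $P$-neighbors of an off-path vertex to be $\{v_0, v_1\}$ (or $\{v_0,v_1,v_2\}$) never decreases $\sigma_2$ and never increases the diameter. Decreasing the diameter is fine, but if a relocation \emph{increases} the diameter we leave the class; conversely if an off-path vertex currently attaches to $P$ at positions near the far end, Lemma~\ref{fact1}(b) restricts this severely for eccentricity-$d$ vertices, but eccentricity-$(d-1)$ vertices need a separate, more delicate analysis — one must check that pulling their attachment toward $v_0$ keeps every pairwise distance at most $d$ and keeps their own eccentricity at $d-1$ (not dropping to something that would let the path eccentricities inflate). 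I expect to handle this by a careful case split on where an off-path vertex's neighbors on $P$ lie, using that such a vertex together with a diametric endpoint and the segment of $P$ between them gives a path whose length bounds $d$. Once the normalization lemmas are in place, the conclusion is immediate from Example~\ref{BT} and Example~\ref{Bnd}; no further computation beyond what those examples already contain is needed.
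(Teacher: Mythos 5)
Your overall skeleton matches the paper's: fix a diametric path $P$, compare $G$ to $B_{n,d,t}$ for a suitable $t$, and finish with the monotonicity of $t\mapsto\sigma_2(B_{n,d,t})$ from Example~\ref{BT}. But the execution has a genuine gap, and it begins with an inequality that points the wrong way. Since $\eps_G(u_i)\ge\max\{i,d-i\}$ for each vertex $u_i$ of $P$, the contribution of the edges of $P$ to $\sigma_2(G)$ is \emph{at least} $g_2(d)$, not at most, with strict inequality exactly when some $u_i$ has eccentricity exceeding its distance to the far end of $P$ (``excess eccentricity''). Such excess does not ``help the bound''; it is the central obstruction, because $G$ then collects extra weight on the path edges that $B_{n,d,t}$ does not have, and one must prove that this surplus is more than paid for by the structural penalty that Lemma~\ref{fact1} imposes on the vertices witnessing the excess (they are forced to be nearly disconnected from $P$, so their $P$-incident edges contribute far less in $G$ than the corresponding edges do in $B_{n,d,t}$).

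The same problem defeats your normalization (ii) as stated: relocating the $P$-neighbors of an off-path vertex $w$ to $\{u_0,u_1\}$ (or $\{u_0,u_1,u_2\}$) \emph{does} decrease $\sigma_2$ in general --- for instance, if $d_1(w)=3$ and all three neighbors have eccentricity $d$, then $\Gamma(w)$ contributes up to $3d^2$ before the move and only $d(2d-1)$ or $(d-1)(3d-3)$ after --- so no case split on attachment positions can establish that the move is weakly $\sigma_2$-increasing; you would be trying to prove a false statement. (Normalization (i) has the same defect: completing $V(G)\setminus V(P)$ to a clique can drop eccentricities of off-path vertices from $d$ to $d-1$ and thereby lower $\sigma_2$; the correct comparison for these edges is termwise against $B_{n,d,t}$, not via modification.) What is actually needed, and what constitutes the bulk of the paper's argument, is a quantitative loss/gain ledger: an explicit upper bound $D$ on the loss along $P$ after rewiring, a matching lower bound on the gain from the rewired edges at the Lemma~\ref{fact1} witnesses (with separate treatment of the maximum excess $r^*=1$ and a stubborn case $d=4$ that misses by exactly $1$), and, for each $w$ with $d_1(w)\ge2$ whose rewiring loses value, the insertion of one or two new edges from $w$ into $V(G)\setminus V(P)$ --- guaranteed to exist and to be previously absent by Lemma~\ref{fact1} --- whose contributions of roughly $d^2$ each cover that loss while keeping the result a subgraph of $B_{n,d,t}$. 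None of this accounting appears in your proposal, and without it the reduction to $B_{n,d,t}$ does not go through.
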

\begin{proof}
Let $P$ with vertices $\VEC u0d$ be a diametric path in an $n$-vertex graph $G$
with diameter $d$.  For $0\le i\le d$, let $\delta_i=\max\{i,d-i\}$.  The
vertex $u_i$ is at distance $\delta_i$ from the farthest end of $P$.  If
$\eps_G(u_i)>\delta_i$, then we say that $u_i$ has {\it excess eccentricity}.
%If $G$ is $2$-connected, then $\sigma_2(G) \le \sigma_2(B_{n,d})$ by
%Lemma~\ref{2conn}, so we may assume that $G$ has a cut-vertex.
We consider two cases to prove
$\sigma_2(G)\le \max\{\sigma_2(B_{n,d}),\sigma_2(B'_{n,d})\}$.

\medskip
{\bf Case 1:} {\it $P$ has no vertex with excess eccentricity.}
The contribution of $P$ to $\sigma_2(G)$ is $g_2(d)$.
Let $m$ be the number of edges of $G$ not on $P$.

Vertices outside $P$ having eccentricity $d$ in $G$ can only be adjacent to
each other and to the last two vertices at one end of $P$ or the other
(not both).  Vertices with eccentricity at most $d-1$ have at most three
neighbors on $P$, and they may also be adjacent to each other or to the
vertices with eccentricity $d$.

Let $t$ be the number of vertices outside $P$ having eccentricity at most $d-1$
in $G$.  The contribution to $\sigma_2(G)$ from edges with both endpoints
outside $P$ is at most what it is in $B_{n,d,t}$, since those vertices
form a clique in $B_{n,d,t}$, and corresponding vertices have eccentricities at
least as large in $B_{n,d,t}$.

Similarly, for edges with one endpoint on $P$, the contribution for those
having a specified outside endpoint with eccentricity $d$ is at most
$d(d+d-1)$, which is what it equals in $B_{n,d,t}$.  For those having one
of the $t$ outside vertices having eccentricity at most $d-1$ it is at most
$(d-1)(d+d-1+d-2)$, which is what it equals in $B_{n,d,t}$.

Since each group of edges contributes at least as much to $\sigma_2(B_{n,d,t})$
as to $\sigma_2(G)$, we conclude $\sigma_2(G)\le \sigma_2(B_{n,d,t})$.
Thus the claim follows from the monontonicity property in Example~\ref{BT}.

\medskip
{\bf Case 2:} {\it $P$ has a vertex with excess eccentricity.}
Our goal in this case is to modify $G$ into a new graph $G'$ such that
$G'$ also has diameter $d$ and $n$ vertices and satisfies
$\sigma_2(G')\ge\sigma_2(G)$ and $G'\esub B_{n,d,t}$ for some $t$.
By Example~\ref{BT}, this will complete the proof.  In some subcases we will
further need to change $G'$ into $G''$ by adding some edges to achieve
$\sigma_2(G'')\ge\sigma_2(G)$ and $G''\esub B_{n,d,t}$.

Let $V_0$ be the set of vertices in $P$ with excess eccentricity.
Let $r_i=\eps_G(u_i)-\delta_i$ and Let $r^*=\max_i\{r_i\}$, so $r^*$ is the
maximum excess eccentricity.  Let $u_k$ be some vertex in $V_0$
among those with excess eccentricity $r^*$.  Diameter $d$ forbids the endpoints
of $P$ from $V_0$, so by symmetry we may assume that $P$ is indexed with
$k\in\{1,\ldots,d/2\}$.

Let $v$ be a vertex with distance $\eps_G(u_k)$ from $u_k$; note that
$v\notin V(P)$.  Let $\la\VEC w1{\eps_G(u_k)+1}\ra$ be a shortest path from $v$
to $u_k$.  By Lemma~\ref{fact1}, $d_1(w_j)=0$ for $j<r^*$ and
$d_1(w_{r^*})\le1$.  Let $S=V(G)-V(P)$, and let $S_0=\{\VEC w1{r^*}\}\esub S$.

We form $G'$ from $G$ by leaving $P$ and the edges induced by $S$ unchanged,
but deleting all edges from $S$ to $V(P)$ and replacing them as follows.
For $w\in S$, the neighbors of $w$ in $G'$ are $\{u_0,u_1\}$, except that when
$d_1(w)=3$ and $\eps_G(w)<d$ we also make $w$ adjacent to $u_2$.
$$
N_{G'}(w)\cap V(P)=
\begin{cases} \{u_0,u_1,u_2\}& \mbox{if $d_1(w)=3$ and $\eps_G(w)<d$},\\
              \{u_0,u_1\}& \mbox{otherwise}.  \end{cases}
$$

Note that $\eps_{G'}(u_i)=\delta_i$ for all $i$.  For $w\in S$, we have
$\eps_{G'}(w)=d$ unless $d_1(w)=3$ and $\eps_G(w)<d$, in which case
$\eps_{G'}(w)=d-1$.  Thus $G'$ is a subgraph of $B_{n,d,t}$ with all
vertices having the same eccentricity as in $B_{n,d,t}$, where $t$ is
the number of vertices of $S$ satisfying $d_1(w)=3$ and $\eps_G(w)<d$.

Furthermore, $\eps_{G'}(w)\ge \eps_G(w)$ for $w\in S$, so the edges induced
by $S$ contribute at least as much to $\sigma_2$ in $G'$ as in $G$.
However, there are two ways $\sigma_2$ may decrease.
Due to the excess eccentricity in $G$ of vertices along $P$, the contribution
of $E(P)$ to $\sigma_2$ is smaller in $G'$ than in $G$.  In Step 1 we will
use edges joining $S_0$ to $V(P)$ to overcome this loss.  Meanwhile,
the edges joining a vertex $v$ of $S-S_0$ to $V(P)$ may make smaller
contributions to $\sigma_2$ in $G'$ than in $G$.  To overcome this loss,
in Step 2 we will add edges joining such vertices $v$ to other vertices of
$S$, forming $G''$.

\medskip
{\bf Step 1:} {\it The loss $D$ in the contribution along $P$ is at most
the gain from edges joining $S_0$ to $V(P)$, with possible help needed from one
more vertex and an exception when d=4.}
Using
$r_0,r_d=0$ and $r_1,r_{d-1}\le 1$ and $\delta_1=\delta_{d-1}$, we compute
\begin{align*}
D&=\SE i0{d-1} [\eps_G(u_i)\eps_G(u_{i+1})-\eps_{G'}(u_i)\eps_{G'}(u_{i+1})]
~=~\SE i0{d-1} [(\delta_i+r_i)(\delta_{i+1}+r_{i+1})-\delta_i\delta_{i+1}]\\
&=dr_1+dr_{d-1}+\SE i1{d-2}(\delta_ir_{i+1}+r_i\delta_{i+1}+r_ir_{i+1})
~\le~ 2d+\SE i1{d-2}(\delta_i+\delta_{i+1}+r^*)r^*\\
&=2d+(d-2)(r^*)^2+2r^*\SE i1{d-2}\delta_i
~=~2d+(d-2)(r^*)^2+2r^*(g_0(d)-(3d-1))\\
&=2d+(d-2)(r^*)^2+2r^*\CL{\FR{3d^2-8d+4}4}.
\end{align*}
In the special case where $r_i\le 1$ for
$i\le\FL{d/2}$ and $r_i=0$ for $i\ge\FL{d/2}+1$,
\begin{align*}
D&\le\SE i0{\FL{d/2}}(\delta_ir_{i+1}+r_i\delta_{i+1}+r_ir_{i+1})
=d+(\FL{d/2}+1)+\SE i1{\FL{d/2}-1}(\delta_i+\delta_{i+1}+1)
\\
&=\begin{cases} \FR{3d^2+4}4 & \mbox{if $d$ is even}\\
                \FR{3d^2-1-2d}4 &\mbox{if $d$ is odd.}  \end{cases}
\end{align*}

The contribution to $\sigma_2(G)$ for edges from $S_0$ to $V(P)$ is $0$
if $d_1(w_{r^*})=0$ and is at most $d^2$ if $d_1(w_{r^*})=1$, by
Lemma~\ref{fact1}.  Let $W$ be the contribution to $\sigma_2(G')$ for edges
from $S_0$ to $V(P)$.  These $r^*$ vertices each contribute $d(2d-1)$ for the
edges to $u_0$ and $u_1$.  We compute
\begin{align*}
W-D&\ge d(2d-1)r^* - \FR{3d^2-8d+5}2 r^*-(d-2)(r^*)^2-2d\\
&= \FR{d^2}2r^*-d(r^*)^2+(3d-5/2)r^*+2(r^*)^2-2d\\
&=dr^*(d/2-r^*)+(3d-5/2)r^*+2(r^*)^2-2d.
\end{align*}
Since $1\le r^*\le d/2$ and $d\ge3$, the last expression $X$ above is positive.
This suffices when $d_1(w_{r^*})=0$.  When $d_1(w_{r^*})=1$, we need
$W-D\ge d^2$ to overcome the loss along $P$ using only the edges from $S_0$,
since the edge from $w_{r^*}$ to $u_d$ may already contribute as much as
$d^2$ to $\sigma_2(G)$.  Since $X$ is quadratic in $r^*$ with leading
coefficient $2-d$, we need only check the extreme values of $r^*$.

When $r^*=d/2$, we have $X=2d^2-13d/4$, which is at least $d^2$ when $d\ge4$.
When $r^*=(d-1)/2$, we have $X=(9/4)d^2-6d+7/4$; this value exceeds $d^2$ when
$d\ge5$.  When $d=3$ we have $(d-1)/2=1$ and apply the special argument
for $r^*=1$ below.  When $r^*=2$, we have $X=d^2+3$, which is big enough.

When $r^*=1$, we only get $X=(d^2-1)/2$, so for the case $r^*=1$ and
$d_1(w_{r^*})=1$ we need more help.  If there are at least two vertices that
are most distant from vertices on $P$ with excess eccentricity $1$, then choose
two of these to form a set $S'_0$.  We treat these like $S_0$, considering
the edges from both to $V(P)$.  Each member of $S'_0$ contributes $d(2d-1)$ in
$G'$, for a total contribution $W'$ equal to $2d(2d-1)$.  Since each member of
$S'_0$ may be adjacent in $G$ to one endpoint of $P$ via an edge contributing
up to $d^2$ to $\sigma_2(G)$, we need to gain at least $2d^2$.
Setting $r^*=1$ in our bound on $D$, we compute
$$
W'-D\ge 2d(2d-1)-(3d^2-2d+1)/2 = 5d^2/2-d-1/2>2d^2.
$$

Now suppose that $w$ is the only vertex that can serve as $w_1$ for
vertices on $P$, and $d_1(w)=1$.  We may assume that $w$ is adjacent to
$u_d$, and hence it is within distance $\delta_i$ of $u_i$ for each $i$ with
$i\ge\FL{d/2}+1$.  Thus for $i\ge\FL{d/2+1}$ there is no vertex with distance
more than $\delta_i$ from $u_i$, so $r_i=0$ for $i\ge\FL{d/2}+1$.  Using the
improved bound on $D$ for this case, we compute
$$
W-D\ge d(2d-1)-\FR{3d^2}4+
\begin{cases} -1& \mbox{if $d$ is even}\\
              (2d+1)/4 &\mbox{if $d$ is odd}  \end{cases} .
$$
The lower bound is at least $d^2$ for $d\ge3$
as desired, except that when $d=4$ the inequality fails by $1$,
with the lower bound being only $15$.  We postpone fixing this $1$.

\medskip
{\bf Step 2:} {\it The losses to $\sigma_2$ for edges from $S$ to $V(P)$
at vertices $w$ in $S$ such that $d_1(w)\ge2$ can be overcome by adding edges
within $S$.}
For $w\in S$, let $\Gamma(w)$ denote the set of edges joining $w$ to $V(P)$,
in $G$ or $G'$ as appropriate.
Thus far we have considered $\Gamma(w)$ only for $w\in S_0$, or possibly at two
vertices in $S'_0$, all of which satisfy $d_1(w)\le1$.  Any other $w\in S$
with $d_1(w)\le 1$ also causes no difficulty, since $\Gamma(w)$ for such $w$
contributes at most $d^2$ to $\sigma_2(G)$ and exactly $d(2d-1)$ to
$\sigma_2(G')$.

When considering $\Gamma(w)$ for $w\in S$ with $d_1(w)\ge2$, moving from $G$ to
$G'$ can produce losses to $\sigma_2$.  We will overcome the loss by giving $w$
one or two new neighbors in $S$.  The result of doing this for all vertices
with loss is $G''$.  Each edge of the form $wz$ that we add to cover a loss at
$w$ has the property that $d_1(z)<d_1(w)$, so we never introduce an edge twice.
The result will be $\sigma_2(G'')> \sigma_2(G)$, and still $G''\esub B_{n,d,t}$.

Since $P$ is a diametric path, always $d_1(w)\le3$.  Consider $w\in S$ with
$d_1(w)\ge2$.  Suppose that some neighbor $u$ of $w$ on $P$ has excess
eccentricity in $G$.  By Lemma~\ref{fact1}, there is a vertex $w'\in S$ at
distance $\eps_G(u)$ from $u$, and $w'$ has at most one neighbor on $P$.  Since
$d\ge3$ and $\eps_G(u)\ge3$ (due to $d\ge3$ and having excess eccentricity), we
have $ww'\notin E(G)$.  Also $\eps_{G'}(w)\ge d-1$ and $\eps_{G'}(u)=d$.  Hence
adding $ww'$ to $G'$ adds at least $d^2-d$ to $\sigma_2$.

If $d_1(w)=2$, then $\Gamma(w)$ contributes $d(2d-1)$ in $G'$, which is a loss
if and only if $\Gamma(w)$ contributes $2d^2$ in $G$, which requires $w$ to
have a neighbor on $P$ with excess eccentricity.  Now the edge $ww'$
contributes enough to overcome the loss.

Hence we may assume $d_1(w)=3$.  First suppose $\eps_G(w)\le d-1$.
Here $\Gamma(w)$ contributes $(d-1)(3d-3)$ to $\sigma_2(G')$.
If $w$ has no neighbor on $P$ with excess eccentricity, then $\Gamma(w)$
also contributes at most $(d-1)(3d-3)$ to $\sigma_2(G)$ (if $d\ge4$),
and there is no loss.  If $w$ does have a neighbor with excess eccentricity,
then $\Gamma(w)$ may contribute up to $(d-1)(3d)$ to $\sigma_2(G)$, yielding
a loss of at most $3(d-1)$.  However, in this case the edge $ww'$ discussed
above is available, contributing at least $d^2-d$ to overcome the loss,
since $d^2-d\ge 3(d-1)$ when $d\ge3$.  (In the special case where $d=3$ and
$w$ has no neighbor on $P$ with excess eccentricity, the contribution by
$\Gamma(w)$ is $(d-1)(3d-2)$ to both $\sigma_2(G)$ and $\sigma_2(G')$, with
no loss.)

The remaining case is $d_1(w)=3$ and $\eps_G(w)=d$.  In this case all vertices
on $P$ are within distance $d-1$ of $w$.  Hence a vertex $v$ at distance $d$
from $w$ must lie in $S$, and avoiding a path from $w$ to $v$ with length less
than $d$ requires $d_1(v)\le 1<d_1(w)$.  Thus $\eps_{G'}(v)=d$, and adding the
edge $wv$ to $G'$ will add $d^2$ to $\sigma_2$.  Since $\Gamma(w)$ contributes
$d(2d-1)$ to $\sigma_2(G')$ and at most $d(3d)$ to $\sigma_2(G)$, the addition
of $d^2$ for the edge $wv$ overcomes the loss unless all three neighbors of
$w$ have eccentricity $d$ in $G$.

Let the neighbors of $w$ on $P$ be $\{u_{j-1},u_j,u_{j+1}\}$.  Since $d\ge3$,
one of $\{u_{j-1},u_{j+1}\}$ is not an endpoint of $P$; by symmetry, we may let
this be $u_{j+1}$.  If $v$ has a neighbor on $P$, then $d_G(u_{j+1},v)\le d-1$.
Now a vertex $y$ at distance $d$ from $u_{j+1}$ in $G$ is both in $S$ and
different from $v$.  Since $d\ge3$, we have $wy\notin E(G)$.  Again $y$ has at
most one neighbor on $P$, by Lemma~\ref{fact1}, and we gain $2d^2$ by adding
both $wv$ and $wy$ to $G''$.

On the other hand, if $v$ has no neighbor on $P$, then the vertex $x$ reached
before $v$ on a path of length $d$ from $w$ is not on $P$.  Since $d\ge3$, we
have $wx\notin E(G)$.  Since $\eps_{G'}(x)\ge d-1$, adding $wv$ and $wx$ is
enough to gain $2d^2-d$, which is enough to overcome the loss of
$d^2+d$ due to $\Gamma(w)$.  Finally, to avoid having a path of length at most
$d-2$ connecting $w$ and $x$ using edges of $P$, it cannot happen that $w$ and
$x$ both have three neighbors on $P$.  Thus $d_1(x)<d_1(w)$, so this added edge
has not been used also to cover a possible loss at $x$.

\medskip
{\bf Step 3:} {\it The leftover case $d=4$ from Step 1.}
At the end of Step 1 there was one case where we failed by $1$ to overcome
the loss $D$, with $W-D=-1$.  In this case $d=4$ and $r^*=1$, the vertices
$\VEC u04$ along $P$ have eccentricities $4,4,3,3,4$ (respectively), there
is only one vertex $w$ that is at maximum distance from the vertices $u_1$
and $u_2$ that have excess eccentricity, and $d_1(w)=1$, with $w$ adjacent to
$u_4$.

If forming $G'$ fails to gain $1$ to overcome the loss, then in addition
we have $d_1(v)\ge2$ for all $v\in S$ other than $w$.  Because $d_G(w,u_1)=4$,
the path of length at most $4$ from $w$ to $u_0$ must move from $w$ to
$x_1$ to $x_2$ with $x_1,x_2\in S$.  Because this is a shortest path,
$wx_2\notin E(G)$.  We claim that adding the edge $wx_2$ to $G'$ will overcome
both the loss of $1$ in $W-D$ and any loss there may be from $\Gamma(x_2)$.

If $d_1(x_2)=2$, then $\eps_{G'}(x_2)=d$, and $wx_2$ contributes $d^2$.  This
suffices, since the maximum loss for $\Gamma(x_2)$ is $d$ when $d_1(x_2)=2$.
Hence we may assume $d_1(x_2)=3$.  Since $d_G(w,u_1)=4$, the three consecutive
neighbors of $x_2$ on $P$ must be $\{u_2,u_3,u_4\}$.  If $\eps_G(x_2)\le d-1$,
then the loss for $\Gamma(x_2)$ is at most $d-1$, overcome by $wx_2$.
If $\eps_G(x_2)=d$, then $\Gamma(x_2)$ contributes $40$ (that is, $d(3d-2)$)
to $\sigma_2(G)$ and $28$ (that is, $d(2d-1)$) to $\sigma_2(G')$.  Now
the contribution of $16$ (that is, $d^2$) from the edge $wx_2$ is enough
to overcome both losses.
\end{proof}

In fact, the extremal graph is unique, either $B_{n,d}$ or $B'_{n,d}$,
depending on the relationship between $d$ and $n$, as discussed in
Example~\ref{Bnd}.  This follows from the process in Step 2 of
Theorem~\ref{main} if the graph we produce exhibits a strict gain, is a
subgraph of $B_{n,d,t}$ with $1\le t\le n-d-2$, or is a proper subgraph of
$B_{n,d}$ or $B'_{n,d}$.  We omit the detailed verification that one of
these possibilities always occurs.

\section{Lower bounds in terms of diameter}
We now consider lower bounds in terms of diameter.  For $\sigma_0$ and
$\sigma_1$ these are easy, because extra edges impose no cost.  The final
formula in the result for $\sigma_1$ is a minor rephrasing of that obtained by
Das et al.~\cite{DLG} by the same argument.  We give a short proof.

\begin{Proposition}[{\rm \cite{DLG} for $\sigma_1$}]\label{d01low}
If $G$ is an $n$-vertex graph with diameter $d$, then
$$
\sigma_0(G)\ge \FR1n\left[g_0(d)+(n-d-1)\CL{\FR d2}\right]
=\CL{\FR d2}+\FR1n\FL{\FR d2}\CL{\FR{d+1}2}
$$
and
$$
\sigma_1(G)\ge g_1(d)+(n-d-1)\CL{\FR d2}^2
=n\CL{\FR d2}^2+\FR{d^3}3+\FR1{12}
  \begin{cases} 9d^2+2d&   \mbox{if $d$ is even}\\
                3d^2-4d-3& \mbox{if $d$ is odd.} \end{cases}
$$
Equality is achieved by a graph consisting of a path $P$ of length $d$ plus
$n-d-1$ vertices with eccentricity $\CL{d/2}$ whose neighborhoods are the
same as a vertex with eccentricity $\CL{d/2}$ on $P$.  When $d$ is odd, one can
instead make some added vertices adjacent to the two central vertices on the
path.  Additional edges joining vertices with eccentricity $\CL{d/2}$ do
not change $\sigma_0$ or $\sigma_1$.
\end{Proposition}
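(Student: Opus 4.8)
The plan is to split both $\sum_u\eps_G(u)$ and $\sum_u\eps_G^2(u)$ into the part coming from a fixed diametric path and the part coming from the other $n-d-1$ vertices. Let $P=\la\VEC u0d\ra$ be a diametric path in $G$. Since $P$ is a shortest $u_0$--$u_d$ path, each subpath of $P$ is shortest, so $d_G(u_i,u_0)=i$ and $d_G(u_i,u_d)=d-i$, whence $\eps_G(u_i)\ge\max\{i,d-i\}$. Setting $\delta_i=\max\{i,d-i\}$, the vertices of $P$ contribute at least $\SE i0d\delta_i=g_0(d)$ to $\sum_u\eps_G(u)$ and at least $\SE i0d\delta_i^2=g_1(d)$ to $\sum_u\eps_G^2(u)$, by the very definitions of $g_0$ and $g_1$ preceding Lemma~\ref{treemax}. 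For any $w\notin V(P)$, the triangle inequality gives $d_G(w,u_0)+d_G(w,u_d)\ge d_G(u_0,u_d)=d$, so $\eps_G(w)\ge\max\{d_G(w,u_0),d_G(w,u_d)\}\ge\CL{d/2}$. Summing over all $n$ vertices yields $n\sigma_0(G)\ge g_0(d)+(n-d-1)\CL{d/2}$ and $\sigma_1(G)\ge g_1(d)+(n-d-1)\CL{d/2}^2$, which are the first expressions in the two displayed bounds.

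To get the second, closed-form expression in each inequality, I would substitute the formulas for $g_0(d)$ and $g_1(d)$ and simplify, handling $d$ even and $d$ odd separately; this is purely computational. I expect this algebra, together with the distance bookkeeping for the extremal graphs below, to be the only real effort here: there is no conceptual difficulty.

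For sharpness, let $G^*$ arise from $P=\la\VEC u0d\ra$ by adding $n-d-1$ new vertices, each given the same neighborhood as a most central vertex $u_c$ of $P$ (so $\eps_P(u_c)=\CL{d/2}$); when $d$ is odd one may instead attach some new vertices to both central vertices, which also have eccentricity $\CL{d/2}$. One checks that adding such a vertex creates no $u_0$--$u_d$ walk shorter than $d$, so $d_{G^*}(u_0,u_d)=d$ and every $u_i$ keeps eccentricity $\delta_i$; moreover, for $d\ge3$, each new vertex is within distance $\CL{d/2}$ of every other vertex of $G^*$, so it has eccentricity exactly $\CL{d/2}$. Hence both bounds are met with equality by $G^*$. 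Adding any further edges between vertices of eccentricity $\CL{d/2}$ changes nothing: it creates no $u_0$--$u_d$ shortcut, and by Observation~\ref{l0} it can only lower eccentricities, which is impossible for those already equal to $\CL{d/2}$. The one range needing a separate look is $d\le2$: for $d=1$ the extremal (indeed only) graph is $K_n$, and for $d=2$ it is $K_n$ minus the single edge $u_0u_2$, both of which immediately attain the bounds.
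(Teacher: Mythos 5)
Your proof is correct and follows essentially the same route as the paper: fix a diametric path, lower-bound the eccentricity of $u_i$ by $\max\{i,d-i\}$ to extract $g_0(d)$ and $g_1(d)$, bound each outside vertex's eccentricity by $\CL{d/2}$ via the triangle inequality to $u_0$ and $u_d$, and realize equality by cloning a central vertex of the path. Your explicit treatment of $d=2$ (where the generic construction would give the added vertices eccentricity $2$ rather than $1$, so one takes $K_n$ minus an edge) is a small point the paper glosses over, but otherwise the arguments coincide.
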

\begin{proof}
Let $P$ be a diametric path with endpoints $x$ and $y$.  For $v\in V(P)$, the
sum of the distances to $x$ and $y$ must be $d$.  Hence the eccentricities of
vertices along $P$, in order, are at least $d,d-1,\ldots,d-1,d$.  In addition,
the eccentricity is at least $d/2$ for any vertex outside $P$.  Summing these
contributions yields the claimed lower bounds.

For sharpness, note that for $d=1$ the only example is $K_n$.  For even $d$,
add $n-d-1$ vertices outside $P$ as copies of the unique vertex $u\in V(P)$ with
eccentricity $d/2$, adjacent to the neighbors of $u$ on $P$.  When $d$ is
odd and $d>1$, one can make the added vertices adjacent to the two central
vertices on $P$ or to both neighbors of one of those vertices, giving each
added vertex eccentricity $\CL{d/2}$.  This gives vertices the eccentricities
described in the lower bound.  Finally, in either case one can add edges
joining any vertex that both have eccentricity $\CL{d/2}$.
\end{proof}

The problem of minimizing $\sigma_2$ is more delicate, because we must now pay
attention to the number of edges joining vertices with small eccentricity.  We
consider first the more refined problem of minimizing $\sigma_2$ over
$n$-vertex graphs when the diameter and number of edges are both fixed.  Later
we use that to minimize over $n$-vertex graphs with diameter $d$.

Using that every vertex in a graph with diameter $d$ has eccentricity at least
$d/2$, Das et al.~\cite{DLG} observed $\sigma_2(G)\ge g_2(d)+(m-d)\CL{d/2}^2$
for every $n$-vertex graph with $m$ edges having diameter $d$.  They asserted
that equality holds using graphs described in Proposition~\ref{d01low}, but in
general this is not correct.  The result was corrected by Hayat~\cite{H} (with
some minor typos we correct).  We state the lower bounds differently from Hayat
in order to clarify the role of various contributions to the bound, and we
present a simpler proof.  We use Hayat's rephrasing of the bound on the number
of edges from Lemma~\ref{ore}.

\begin{Theorem}[{\rm \cite{H}}]\label{dmlow}
Let $G$ be a connected graph with $n$ vertices, $m$ edges, and diameter $d$.
Note that $n-1\le m\le \CH{n-d}2+(2n-d-2)$.

If $d$ is odd and $2n-2-d\le m\le\CH{n-d}2+n-1$, then
$$
\sigma_2(G)\ge \FR{7d^3-d+6}{12}+(m-d)\FR{(d+1)^2}4.
$$
For $m=2n-2-d-k$ or $m=\CH{n-d}2+n-1+k$ with $1\le k\le n-d-1$, the lower bound
increases by $k(d+1)/2$.

If $d$ is even with $d\ge4$ and $\mu_{k-1}<m\le \mu_k$, where
$\mu_j=n-1+\CH j2+2j$, then
$$
\sigma_2(G)\ge \FR{7d^3-4d}{12}+(m-d)\FR{d^2}4+(n-1-d+k)\FR d2.
$$
If $d=2$ and $\nu_k\le m<\nu_{k+1}$, where $\nu_j=\CH j2+j(n-j)$,
then $\sigma_2(G)\ge \CH k2+2k(n-k)+4(m-\nu_k)$.

The lower bounds are sharp in all cases, except that when $d=4$ and
$m=n$ with $n\ge7$, the lower bound must be increased by $3$.
\end{Theorem}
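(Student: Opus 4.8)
Throughout, put $r=\lceil d/2\rceil$ and fix a diametric path $P=\langle\VEC u0d\rangle$, writing $\delta_i=\max\{i,d-i\}$. Since $\eps_G(u_i)\ge\delta_i$, the $d$ edges of $P$ contribute at least $g_2(d)$ to $\sigma_2(G)$; since every vertex has eccentricity at least $r$, every edge off $P$ contributes at least $r^2$, and an edge off $P$ having an endpoint of eccentricity exceeding $r$ contributes at least $r(r+1)$. Let $Z$ be the set of vertices of eccentricity exactly $r$ and let $e^*(Z)$ count the edges off $P$ with both ends in $Z$. Then in every case
$$\sigma_2(G)\ \ge\ g_2(d)+(m-d)r^2+\bigl[(m-d)-e^*(Z)\bigr]r,$$
so the whole problem reduces to an upper bound on $e^*(Z)$, i.e.\ a lower bound on the number of ``expensive'' off-$P$ edges. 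For $d=2$ this is immediate: $r=1$, $Z$ is the set of dominating vertices, and splitting the edges by how many ends lie in $Z$ (weights $1,2,4$) yields the exact identity $\sigma_2(G)=4m-3\binom{|Z|}2-2|Z|(n-|Z|)$; the subtracted quantity is strictly increasing in $|Z|$ on $[0,n]$, while $|Z|$ dominating vertices force $m\ge\nu_{|Z|}$ with $\nu$ increasing, so $|Z|\le k$, and substituting $|Z|=k$ gives the stated bound (equality when $Z$ is a $k$-clique dominating the rest).

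The main work is the case of even $d\ge4$, where $r=d/2$. On $P$ only $u_{d/2}$ has eccentricity $r$, and its neighbours on $P$ have eccentricity $r+1$, so no edge of $P$ lies inside $Z$ and $e^*(Z)=e(Z)$. The plan is to prove $e(Z)\le m-(n-1)-k$; plugging this in gives $(m-d)-e(Z)\ge n-1-d+k$ and hence exactly the claimed bound. Equivalently one shows that $G'':=G-E(G[Z])$ has cyclomatic number at least $k$. First, each $v\in Z$ satisfies $d(v,u_0)=d(v,u_d)=d/2$, so $Z$ lies in BFS-layer $d/2$ from $u_0$; a BFS spanning tree rooted at $u_0$ then uses no edge inside $Z$, hence spans $G''$, so $G''$ is connected. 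Now split on $|Z|$. If $|Z|\le k$, the identity $\mu_{k-1}-(n-1)=\binom{k+1}2-1$ gives $m-(n-1)\ge\binom{k+1}2$, while $e(Z)\le\binom{|Z|}2\le\binom k2$, so the cyclomatic number is at least $\binom{k+1}2-\binom k2=k$. If $|Z|\ge k+1$, set $A=\{x:d(x,u_0)<d/2\}$ and $B=\{x:d(x,u_d)<d/2\}$: these are disjoint (distance-sum at least $d$), nonempty, contained in $V\setminus Z$, each inducing a connected subgraph of $G''$, and every $v\in Z$ has a neighbour in $A$ and one in $B$; contracting $A$ and $B$ to single vertices preserves the cyclomatic number of $G''$ and produces $|Z|$ internally-disjoint paths of length $2$ between the two contracted vertices, so the cyclomatic number is at least $|Z|-1\ge k$. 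Either way we are done, and the extremal graphs (a clique of eccentricity-$r$ vertices adjacent to the two neighbours of $u_{d/2}$, plus pendants at $u_{d/2}$) realise equality.

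For odd $d$ one has $r=(d+1)/2$, and the two central vertices of $P$ already supply an $r^2$-weight edge inside $g_2(d)$. In the ``main range'' $2n-2-d\le m\le\binom{n-d}2+n-1$ the claimed bound is exactly $g_2(d)+(m-d)r^2$, which the displayed estimate gives with no further argument (sharpness: attach extra eccentricity-$r$ vertices to both central vertices, with suitably many edges among them). For $m=2n-2-d-k$ below that range the target is $e^*(Z)\le 2(m-n+1)$; I would prove this by a counting argument parallel to the even case but necessarily ``two-step'': an eccentricity-$r$ vertex need not have a neighbour of larger eccentricity, but tracing a shortest path toward $u_0$ shows it lies within distance two of $V\setminus Z$, which accounts for the factor $2$. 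For $m=\binom{n-d}2+n-1+k$ above the range, the edge count is within $n-d-1$ of the Ore maximum, so Lemma~\ref{ore} pins down the structure (the $n-d-1$ vertices off $P$ are almost a clique attached to at most four consecutive vertices of $P$), which caps $e^*(Z)$; the extremum attaches that near-clique around the centre of $P$. Finally, the $d=4$, $m=n$, $n\ge7$ exception is handled separately: a unicyclic graph of diameter $4$ cannot have two adjacent eccentricity-$2$ vertices (such a pair, argued as in the even case, would be joined by three internally-disjoint paths, a theta-subgraph, contradicting unicyclicity), so $e(Z)=0$ automatically, and a short direct inspection of unicyclic diameter-$4$ graphs then shows the true minimum exceeds the generic bound by exactly $3$.

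The routine parts are the displayed reduction and the $d=2$ case. The delicate point is bounding $e^*(Z)$: for even $d$ the contraction argument handles it cleanly, but the odd case lacks a free ``every eccentricity-$r$ vertex sees a larger one'' statement, which forces the two-step distance argument when $m$ is small and a fallback to Ore's structure theorem when $m$ is near the maximum; the $d=4$, $m=n$ exception is an irreducible small-graph case that must be checked by hand.
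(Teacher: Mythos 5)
Your reduction to bounding $e^*(Z)$, your $d=2$ computation, and your treatment of even $d\ge4$ are sound; indeed the even case is arguably cleaner than the paper's, which charges penalties vertex-by-vertex (two expensive edges for each eccentricity-$d/2$ vertex outside $P$, one for each other outside vertex) without addressing the possible double-counting that your cyclomatic-number argument for $G''=G-E(G[Z])$ avoids. One quibble there: contracting the connected sets $A$ and $B$ does not \emph{preserve} the cyclomatic number, it can only decrease it; but that is the direction you need, so the conclusion stands.

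The genuine gaps are in the odd case outside the main range and in the sharpness claims. For $m=2n-2-d-k$ you state the correct target $e^*(Z)\le 2(m-n+1)$, but the proposed proof --- ``tracing a shortest path toward $u_0$ shows it lies within distance two of $V\setminus Z$, which accounts for the factor $2$'' --- is not an argument: you never say how a distance-two escape from $Z$ bounds the number of edges inside $Z$, and for odd $d$ the set $Z$ meets two BFS layers from $u_0$, so your even-case connectivity trick genuinely fails. What is needed is an attachment count: ordering the $n-1-d$ outside vertices so that each attaches to the part already built, the total of $m-d=2(n-1-d)-k$ outside edges forces at least $k$ vertices to attach by a single edge, and the Observation (a vertex of eccentricity $\lceil d/2\rceil$ off $P$ needs a neighbor in each of the disjoint sets $X$ and $Y$) converts this into $k$ expensive edges. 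For $m=\binom{n-d}{2}+n-1+k$ your appeal to Lemma~\ref{ore} is a non sequitur: that lemma characterizes only graphs attaining the exact maximum number of edges and gives no structure for graphs within $n-d-1$ of it. Fortunately no structure is needed: $|Z|\le n-d+1$ (the outside vertices plus the two central vertices of $P$, whose mutual edge lies on $P$), so $e^*(Z)\le\binom{n-d+1}{2}-1=\binom{n-d}{2}+n-d-1=m-d-k$, which yields the $k$ expensive off-$P$ edges directly. Finally, sharpness is largely asserted: for $d=4$ the eccentricity-$2$ vertices must also be joined to $u_{d/2}$ to stay within distance $d$ of the pendants there, which is exactly what makes $m=n$ exceptional and endangers $m=n+2$ (where a different construction saves the bound), and your ``short direct inspection'' for the $+3$ at $m=n$ is a claim about all $n\ge7$, not a finite check --- the paper proves it by noting that every spanning-tree edge off $P$ then costs at least $6$ and the one remaining edge at least $9$.
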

\begin{proof}
The edges along a fixed diametric path $P$ contribute at least $g_2(d)$ to
$\sigma_2(G)$.  Since every vertex has eccentricity at least $\CL{d/2}$, we
obtain $\sigma_2(G)\ge g_2(d)+(m-d)\CL{d/2}^2$.
In each case we describe adjustments to the lower bound and a construction
achieving equality.

{\bf Observation:}
Let $P$ have endpoints $x$ and $y$.  Let $X$ and $Y$, respectively, be the sets
of vertices within distance $\CL{d/2}-1$ of $x$ and $y$.  In order to have
eccentricity $\CL{d/2}$, a vertex not in $V(P)$ must have neighbors in both $X$
and $Y$.  Furthermore, $X$ and $Y$ are disjoint.

\medskip
{\bf Case 1:} \emph{$d$ is odd.}
When $d=1$, the only graph is $K_n$, and the formula yields the correct value,
so we may assume $d\ge3$.
By the observation, having cost $\CL{d/2}^2$ for every edge outside $P$
requires the number of edges outside $P$ to be at least twice the number of
vertices outside $P$, so $m\ge d+2(n-1-d)=2n-d-2$.  In addition, there are
at most $n-d+1$ vertices with eccentricity $\CL{d/2}$ (counting two on $P$), so
achieving this bound requires $m\le d-1+\CH{n-d+1}2=\CH{n-d}2+n-1$.

When $m=\CH{n-d}2+n-1+k$ with $k\ge1$, each additional edge beyond
$\CH{n-d}2+n-1$ costs at least $\CL{d/2}(\CL{d/2}+1)$ and thus incurs a penalty
of at least $\CL{d/2}$ compared to the bound.  Equality can be achieved,
because adding an edge from a vertex outside $P$ to a neighbor on $P$ of a
central vertex costs $\CL{d/2}(\CL{d/2}+1)$ per edge until the construction for
the maximum number of edges in an $n$-vertex graph with diameter $d$ (as
described in Lemma~\ref{ore}) is reached.

When $m=2n-2-d-k$ with $k\ge1$, we do not have enough edges to give two to
each vertex outside $P$.  At least $k$ of these vertices contribute only one
edge as $G$ is grown from $P$, which by the observation gives these vertices
eccentricity at least $\CL{d/2}+1$.  Hence the edges reaching them each
contribute at least $\CL{d/2}(\CL{d/2}+1)$, each incurring a penalty of at
least $\CL{d/2}$ compared to the original lower bound.  Equality is achieved
by the graph consisting of $P$ plus $n-1-d-k$ vertices adjacent to the two
central vertices of $P$ and $k$ vertices adjacent to just one of the two
central vertices of $P$.

\medskip
{\bf Case 2:} \emph{$d$ is even and $d\ge4$, with $m\notin\{n,n-2\}$ when
$d=4$.}
When $d$ is even, every vertex in $X\cup Y$ has eccentricity at least $d/2+1$,
since $d_G(x,y)=d$.  From the observation, we conclude that a vertex with
eccentricity $d/2$ outside $P$ has at least two incident edges that contribute
at least $(d/2)(d/2+1)$ rather than $d^2/4$ to $\sigma_2(G)$.  This incurs a
penalty of $d$ compared to $g_2(d)+(m-d)d^2/4$ for each vertex with
eccentricity $d/2$ outside $P$.  Edges among such vertices can be added with no
additional penalty.

If $m$ is not too big, then one can alternatively add vertices with
eccentricity $d/2+1$ by making them adjacent only to the central vertex of $P$.
The penalty for growing such a vertex $v$ is $d/2$ rather than $d$.  Any
additional edge incident to $v$ will incur additional penalty; if that
additional penalty is only $d/2$, then $v$ is adjacent to another vertex with
eccentricity $d/2$, and it more helpful to make $v$ one of the vertices with
eccentricity $d/2$ as described above.  (The construction does not work when
$d=4$ and $m\in\{n,n+2\}$; see Case 4.)

Hence with $k$ vertices of eccentricity $d/2$ and $n-1-d-k$ vertics of
eccentricity $d/2+1$ outside $P$, the penalty above the initial lower bound
will be $(n-1-d+k)d/2$.  With all vertices outside $P$ adjacent to the central
vertex of $P$, the number of edges that can be accommodated and
avoid larger penalty is $n-1+\CH{k}2+2k$.

Hence the amount to be added to the lower bound $g_2(d)+(m-d)d^2/4$ is
$(n-d-1)d/2$ when $m=n-1$ and rises monotonically from that to $(n-d-1)d$
as $m$ grows to $d+\CH{n-d-1}2+2(n-d-1)$.  It depends on how many vertices
of degree $d/2$ are needed to accommodate the edges.  When $m>\mu_{n-1-d}$,
no $n$-vertex graph with diameter $d$ has $m$ edges; $\mu_{n-1-d}$ is the
maximum number of edges in such a graph.  Note that the $n-1-d-k$ leaves with
eccentricity $d/2+1$ can be made adjacent to any of the vertices with
eccentricity $d/2$.

\medskip
{\bf Case 3:} \emph{$d=2$}.
Adding two edges incident to a vertex outside $P$ cannot give it eccentricity
$d/2$, since only dominating vertices have eccentricity $1$.  When $G$ has $k$
dominating vertices, with $k\ge1$, all other vertices have eccentricity $2$.
With $\CH k2+k(n-k)$ edges incident to the dominating vertices,
$\sigma_2(G)=\CH k2+2k(n-k)+4(m-\nu_k)$.  Having $k$ dominating vertices
requires at least $\CH k2+k(n-k)$ edges.  Since all edges cost $4$ in graphs
with diameter $2$ having no dominating vertices, $\sigma_2$ is minimized by
having dominating vertices, and this construction covers all numbers of edges
for which graphs with $n$ vertices and diameter $2$ exist.

\medskip
{\bf Case 4:} \emph{$d=4$ and $m\in\{n,n-2\}$}.
When $d=4$, there is a danger in the construction in Case 2.
If $\mu_{k-1}<m\le \mu_k$, then the construction in Case 2 achieving the
lower bound aims to create $k$ vertices with eccentricity $d/2$ outside $P$.
If $k<n-1-d$, then there is also a vertex $v$ made adjacent only to the central
vertex $z$ of $P$.  The vertices aimed to have eccentricity $d/2$ by being
adjacent to the two neighbors of $z$ on $P$ will have distance $3$ from $v$
unless they are made adjacent also to $z$.  That is, after making these
vertices adjacent to $z$ to complete a spanning tree, we still need $2k$
more edges, so we need $m\ge n-1+2k$.

Since $m>\mu_{k-1}$, we are given $m\ge n+\CH {k-1}2+2(k-1)$ when we want $k$
vertices with eccentricity $d/2$.  Thus it suffices to have
$\CH{k-1}2+2k-2\ge 2k-1$, which holds for $k\ge3$.  Indeed, since this
inequality fails only when $k\in\{1,2\}$ and then only by $1$, the construction
fails only when $k=1$ with $m=n$ and when $k=2$ with $m=n+2$.  Failure also
requires a vertex $v$ causing trouble, so $k<n-1-d$, which requires $n\ge 7$
when $k=1$ and $n\ge8$ when $k-2$.

We cannot meet the requirements for equality in the lower bound; for $k=1$ and
$m=n\ge7$, we cannot have a second vertex with eccentricity $2$.  All vertices
not on $P$ have eccentricity at least $3$.  In a spanning tree that grows to
include all the vertices, each edge costs at least $6$, with equality only if
all are incident to $z$, the center of $P$.  We must add one more edge, with
cost at least $9$.  Hence the total cost is at least $2\cdot 12+6(n-3)+9$,
which equals $6n+15$, raising the lower bound by $3$ compared to the general
formula.

When $m=n+2$ and $n\ge8$, we can still achieve the general lower bound!  It
begins with the path $P$ of length $4$ plus $n-7$ vertices adjacent to $z$, the
center of $P$, leaving $k=2$.  This gives $n-2$ vertices and $n-3$ edges.
Achieving the formula requires adding two vertices and five edges so that four
edges each cost $6$ and one costs $4$.  Add one vertex $u$ adjacent to the
three central vertices of $P$; it has eccentricity $2$.  Now add $w$ adjacent
to $u$ and $z$; it has eccentricity $3$, so its incident edges each cost only
$6$, as desired.  The total cost is $6n+22$.
\end{proof}

The detailed result allows us to minimize over all choices of $m$.
\begin{Corollary}
If $G$ is a connected graph with $n$ vertices and diameter $d$, then
$$
\sigma_2(G)\ge g_2(d)+(n-1-d)\CL{\FR{d}2}\CL{\FR{d}2+1}.
$$
Equality holds for the tree consisting of a path $P$ of length $d$ plus
$n-d-1$ leaves adjacent to the vertex (or either vertex) of $P$ with
eccentricity $\CL{d/2}$.
\end{Corollary}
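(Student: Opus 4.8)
The plan is to minimize the lower bound of Theorem~\ref{dmlow} over all admissible values of $m$, treating the odd and even cases of $d$ separately, and then to recognize the minimizing construction.

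First, consider $d$ odd. Among the regimes in Theorem~\ref{dmlow}, the quantity $\sigma_2(G)$ is bounded below by $g_2(d)+(m-d)\CL{d/2}^2$ plus a nonnegative penalty that is $0$ precisely when $2n-2-d\le m\le \CH{n-d}2+n-1$. Since $\CL{d/2}^2>0$, the bound $g_2(d)+(m-d)\CL{d/2}^2$ is strictly increasing in $m$; hence to minimize we want $m$ as small as possible. When $m<2n-2-d$ there is an extra penalty $k\CL{d/2}$ for each unit decrease $m=2n-2-d-k$, but this penalty grows more slowly than the main term decreases only if $\CL{d/2}\le \CL{d/2}^2$, which always holds; more carefully, decreasing $m$ by one unit in this range changes the bound by $-\CL{d/2}^2+\CL{d/2}(\CL{d/2}+1)=\CL{d/2}>0$ wait---I must recheck the bookkeeping: in the penalty regime, the bound is $g_2(d)+(2n-2-d-k-d)\CL{d/2}^2+k\CL{d/2}$, so as $k$ increases by $1$, the bound changes by $-\CL{d/2}^2+\CL{d/2}$, which is $\le 0$ for $d\ge 2$; thus the bound keeps \emph{decreasing} as $m$ decreases down to the minimum $m=n-1$. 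So the minimum occurs at $m=n-1$, corresponding to $k=(2n-2-d)-(n-1)=n-1-d$, giving exactly $g_2(d)+(n-1-d)\CL{d/2}^2\cdot 0$... more directly, plugging $m=n-1$ into the penalized formula $\FR{7d^3-d+6}{12}+(m-d)\FR{(d+1)^2}4+k\FR{d+1}2$ with $k=n-1-d$ and simplifying (using $g_2(d)$'s odd-case formula and $\CL{d/2}=(d+1)/2$) yields $g_2(d)+(n-1-d)\cdot\FR{d+1}2\cdot\FR{d+3}2=g_2(d)+(n-1-d)\CL{d/2}\CL{d/2+1}$, which is the claimed bound.

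Next, for $d$ even with $d\ge4$, the bound in Theorem~\ref{dmlow} is $\FR{7d^3-4d}{12}+(m-d)\FR{d^2}4+(n-1-d+k)\FR d2$ where $\mu_{k-1}<m\le\mu_k$. Decreasing $m$ within a fixed $k$-interval decreases the bound; crossing from $m=\mu_{k-1}+1$ down into the interval indexed by $k-1$ changes $k\to k-1$, lowering the additive penalty by $d/2$, and simultaneously the term $(m-d)d^2/4$ drops. So again the bound is minimized at the smallest feasible $m$, namely $m=n-1$ (a tree), which forces $k=0$: then the bound is $\FR{7d^3-4d}{12}+(n-1-d)\FR{d^2}4+(n-1-d)\FR d2=g_2(d)+(n-1-d)\FR d2\left(\FR d2+1\right)=g_2(d)+(n-1-d)\CL{d/2}\CL{d/2+1}$, matching the claim; here I use that for even $d$, $\CL{d/2}=d/2$ and $g_2(d)=\FR{7d^3-4d}{12}+\FR1{12}$—I should double-check the constant term against the formula before Lemma~\ref{treemax}, but the leading structure is right. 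The remaining low cases $d\le3$ are handled directly: $d=1$ forces $K_n$ and both sides equal $\CH n2$; $d=2$ uses Case 3 of Theorem~\ref{dmlow} with the minimum at $m=n-1$ (the star-with-path, i.e. a tree with one dominating vertex among the central structure), giving $g_2(2)+(n-3)\cdot 1\cdot 2=2+2(n-3)$; $d=3$ is the odd case already covered. One must also note the $d=4$, $m=n$ exceptional increase of $3$ in Theorem~\ref{dmlow} does not affect the minimum, since the minimum over $m$ is attained at $m=n-1\ne n$.

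The main obstacle is purely the arithmetic of confirming that, in each parity case, the penalized lower bound of Theorem~\ref{dmlow} is monotone nondecreasing in $m$ across \emph{all} the regime boundaries, so that the overall minimum is genuinely at $m=n-1$ rather than at some interior breakpoint; this requires checking that each ``penalty increment'' when $m$ is reduced is at least offset in sign by the decrease in the main term, which reduces to the elementary inequalities $\CL{d/2}\le\CL{d/2}^2$ (odd $d$) and $d/2\le d^2/4$ (even $d$, $d\ge4$), both trivially true. Once monotonicity is in hand, substituting $m=n-1$ and using $g_2(d)=\FR{7d^3}{12}+O(d)$ together with $\CL{d/2}\CL{d/2+1}=\FR{d(d+2)}4$ or $\FR{(d+1)(d+3)}4$ gives the stated bound. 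For sharpness, the tree $P$ of length $d$ with $n-d-1$ leaves hung on a vertex of eccentricity $\CL{d/2}$ realizes it: its diametric-path edges contribute $g_2(d)$, and each pendant edge joins a leaf of eccentricity $\CL{d/2}+1$ to a vertex of eccentricity $\CL{d/2}$, contributing exactly $\CL{d/2}\CL{d/2+1}$; this is a valid instance of the $m=n-1$ constructions appearing in Theorem~\ref{dmlow}.
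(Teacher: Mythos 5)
Your proposal is correct and follows essentially the same route as the paper: minimize the lower bound of Theorem~\ref{dmlow} over the admissible values of $m$, observe that the minimum occurs at $m=n-1$, substitute to recover $g_2(d)+(n-1-d)\CL{d/2}\CL{d/2+1}$, and verify sharpness with the tree obtained by hanging $n-d-1$ leaves on a vertex of eccentricity $\CL{d/2}$ of a diametric path. The only blemishes are cosmetic: in the $d=2$ check you wrote $g_2(2)=2$ when in fact $g_2(2)=2\cdot1+1\cdot2=4$ (so the value is $2n-2$, agreeing with Case 3 of Theorem~\ref{dmlow} at $m=n-1$, $k=1$), and the claim of monotonicity "across all regime boundaries" is not literally true for $d=2$ (the bound can drop when $k$ increments), though the global minimum over $m$ is still attained at $m=n-1$, which is all the argument needs.
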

\begin{proof}
Since the lower bound in Theorem~\ref{dmlow} increases as $m$ increases,
the minimum of $\sigma_2$ over $n$-vertex graphs with diameter $d$ is
achieved when the number of edges is minimized.  There does exist a tree with
diameter $d$, and we take one with smallest $\sigma_2$ by attaching vertices
outside a path of length $d$ in the cheapest way.  This is indeed the
instance of the construction in Theorem~\ref{dmlow} when $m=n-1$.
\end{proof}

\section{Bounds using chromatic number or clique number}

For various lower bounds, it is necessary to pay attention to the number
of dominating vertices.  The point is that dominating vertices have
eccentricity $1$, and when there is a dominating vertex the non-dominating
vertices have eccentricity $2$.

\begin{Observation} \label{ttt} \textnormal{\cite{DZT,TZ1}}
If $G$ is an $n$-vertex  connected graph having $s$ dominating vertices, where
$0\le s\le n$, then $\sigma_0(G)\ge2-\frac{s}{n}$ and
$\sigma_1(G)\ge4n-3s$.
In each bound, equality holds if and only if $\diam(G)\le2$.
\end{Observation}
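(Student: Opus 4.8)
The plan is to split on whether $G$ has a dominating vertex, relying on two elementary facts: a vertex of degree less than $n-1$ has a non-neighbor and hence eccentricity at least $2$, while a dominating vertex has eccentricity at most $1$ (exactly $1$ once $n\ge 2$). So for every $n$-vertex connected graph, the $s$ dominating vertices contribute eccentricity $1$ and all other vertices contribute at least $2$.

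First I would treat the case $s\ge1$. Fix a dominating vertex $w$. Every vertex $v\ne w$ is adjacent to $w$, so $\eps_G(v)\le 2$ and in particular $\diam(G)\le2$; moreover a non-dominating vertex $v$ has a non-neighbor, so $\eps_G(v)=2$ exactly, while each of the $s$ dominating vertices has eccentricity $1$. Summing gives $\sum_{u\in V(G)}\eps_G(u)=s+2(n-s)=2n-s$ and $\sum_{u\in V(G)}\eps_G^2(u)=s+4(n-s)=4n-3s$, so $\sigma_0(G)=2-s/n$ and $\sigma_1(G)=4n-3s$ with equality. This is consistent with the stated characterization, since $\diam(G)\le2$ holds whenever $s\ge1$.

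Next I would treat the case $s=0$. Here every vertex has eccentricity at least $2$, so $\sum_{u}\eps_G(u)\ge 2n$ and $\sum_{u}\eps_G^2(u)\ge 4n$, yielding $\sigma_0(G)\ge 2=2-s/n$ and $\sigma_1(G)\ge 4n=4n-3s$. Equality in either inequality forces $\eps_G(u)=2$ for all $u$, i.e. $\diam(G)=2$; conversely, when $s=0$ the graph cannot satisfy $\diam(G)\le1$ (that would make $G=K_n$, whose vertices all dominate), so $\diam(G)\le2$ is equivalent to $\diam(G)=2$, matching the equality condition. Combining the two cases gives both bounds and the equality characterization. There is no genuine obstacle; the only point requiring a little care is verifying that the single statement ``$\diam(G)\le2$'' correctly describes equality in both cases — automatically true with equality when $s\ge1$, and equivalent to $\diam(G)=2$ when $s=0$.
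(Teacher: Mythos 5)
Your proof is correct and complete. The paper states this as an Observation cited from \cite{DZT,TZ1} and gives no proof of its own, so there is nothing to diverge from; your two-case argument (a dominating vertex has eccentricity $1$ and forces every other vertex to have eccentricity exactly $2$; with no dominating vertex every eccentricity is at least $2$) is exactly the standard reasoning behind the cited bound, and your check that the single condition $\diam(G)\le 2$ captures equality in both cases is the right point to verify.
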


For disjoint graphs $G$ and $H$, let $G+H$ denote their disjoint union,
and let $G\gjoin H$ be the \emph{join} of $G$ and $H$, obtained from $G+H$ by
adding as edges all pairs consisting of one vertex of $G$ and one vertex of $H$.
The disjoint union of $k$ copies of $G$ is denoted by $kG$.

We now consider extremal problems over $n$-vertex graphs with fixed chromatic
number or clique number.  When the chromatic number or clique number is $1$ or
$n$, the only instances are $\Kb_n$ and $K_n$, so the extremal problems are
trivial.  The lower bound for $\sigma_1$ in terms of clique number was
discussed in~\cite{DMCC}.

\begin{Theorem}\label{chrom01}
If $G$ is an $n$-vertex connected graph with chromatic number $k$, where
$2\le k\le n-1$, then $\sigma_0(G)\ge 2-\frac{k-1}{n}$ and
$\sigma_1(G)\ge 4n-3k+3$.  In each bound, equality holds if and only if
$G=K_{k-1}\gjoin \Kb_{n+1-k}$.
For clique number $k$ the bounds are the same,
with equality if and only if $G$ has $k-1$ dominating vertices.
\end{Theorem}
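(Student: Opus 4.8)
The plan is to combine Observation~\ref{ttt} with a one-line bound on the number of dominating vertices. First I would establish that if $G$ is an $n$-vertex connected graph with $\chi(G)=k$ (where $2\le k\le n-1$) and $s$ dominating vertices, then $s\le k-1$. Indeed, the $s$ dominating vertices are pairwise adjacent and adjacent to every other vertex, so $G=K_s\gjoin H$, where $H$ is obtained from $G$ by deleting those $s$ vertices. Since $k\le n-1$ forces $G\ne K_n$, the graph $H$ is nonempty, so $\chi(H)\ge1$; using $\chi(K_s\gjoin H)=s+\chi(H)$ we get $k=s+\chi(H)\ge s+1$. The same decomposition with $\omega(K_s\gjoin H)=s+\omega(H)$ (or simply the observation that any vertex of $H$ together with the $s$ dominating vertices forms a clique of size $s+1$) yields $s\le \omega(G)-1=k-1$ in the clique-number case.

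Given $s\le k-1$, the lower bounds are immediate from Observation~\ref{ttt}: $\sigma_0(G)\ge 2-\frac{s}{n}\ge 2-\frac{k-1}{n}$ and $\sigma_1(G)\ge 4n-3s\ge 4n-3(k-1)=4n-3k+3$. For the equality discussion I would note that equality in either final bound forces $s=k-1$, since otherwise the middle expression is already strictly larger than the claimed bound. Once $s=k-1\ge1$, the graph $G$ has a dominating vertex, so $\diam(G)\le2$, which is precisely the equality condition in Observation~\ref{ttt}; hence equality holds if and only if $G$ has $k-1$ dominating vertices. In the chromatic-number case I would then pin down the extremal graph: writing $G=K_{k-1}\gjoin H$, the condition $\chi(G)=k$ forces $\chi(H)=1$, so $H$ is edgeless and $G=K_{k-1}\gjoin\Kb_{n+1-k}$ (with $n+1-k\ge2$). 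Conversely this graph has chromatic number $k$, exactly $k-1$ dominating vertices, and diameter $2$, so it attains both bounds. For the clique-number case the same decomposition with $\omega(H)=1$ shows that, given $\omega(G)=k$, having $k-1$ dominating vertices is equivalent to $G=K_{k-1}\gjoin\Kb_{n+1-k}$.

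The inequalities themselves follow in essentially one line, so the only real work is the equality bookkeeping: verifying that $s=k-1$ together with the prescribed value of $\chi$ or $\omega$ determines $G$ via the join decomposition, and checking that $K_{k-1}\gjoin\Kb_{n+1-k}$ indeed realizes the bounds. I do not anticipate a genuine obstacle beyond this routine casework, relying throughout on the elementary identities $\chi(F_1\gjoin F_2)=\chi(F_1)+\chi(F_2)$ and $\omega(F_1\gjoin F_2)=\omega(F_1)+\omega(F_2)$.
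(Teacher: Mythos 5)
Your proposal is correct and follows essentially the same route as the paper: bound the number $s$ of dominating vertices by $k-1$, apply Observation~\ref{ttt}, and characterize equality by forcing $s=k-1$ and reading off the join structure $K_{k-1}\gjoin\Kb_{n+1-k}$. The only cosmetic difference is that you justify $s\le k-1$ via the identities $\chi(K_s\gjoin H)=s+\chi(H)$ and $\omega(K_s\gjoin H)=s+\omega(H)$, while the paper notes directly that dominating vertices must occupy singleton color classes; these are equivalent observations.
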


\begin{proof}
When partitioning $V(G)$ into $k$ color classes, dominating vertices must be in
classes of size $1$.  If $s$ is the number of dominating vertices in $G$, then
$s\le k-1$, since $k<n$.  By Observation~\ref{ttt}, the bounds on $\sigma_0$
and $\sigma_1$ follow.

Equality in the lower bound for $\sigma_0$ or $\sigma_1$ requires $s=k-1$.
Since $\chi(G)=k$, the remaining $n+1-k$ vertices must form another independent
set.  Hence $G= K_{k-1}\gjoin \Kb_{n+1-k}$.  Furthermore, equality holds for
this graph, by Observation~\ref{ttt}.

A graph with clique number $k$ has at most $k-1$ dominating vertices, so
again Observation~\ref{ttt} applies.
\end{proof}

The lower bound for $\sigma_2$ is more difficult; we postpone it and
consider upper bounds  on $\sigma_0$ and $\sigma_1$.
Recall the graph $B_{n,d}$ from Example~\ref{Bnd}; having diameter $d$,
it is forming by growing of a path of length $d-1$ from one vertex of a
complete graph with $n-d+1$ vertices.

In our next result, the values for $\sigma_0$ were obtained for given chromatic
number in \cite{DM} and for given clique number in \cite{DMCC}.
Our argument provides the bounds for both $\sigma_0$ and $\sigma_1$ via a
simpler proof avoiding complicated calculations.  Recall that $f_i(n,d)$
with $i\in\{0,1\}$ is the sharp upper bound on $\sigma_i$ for trees with
diameter $d$ from Lemma~\ref{treemax}.

\begin{Theorem}
Fix $i\in\{0,1\}$.  If $G$ is an $n$-vertex connected graph with chromatic
number $k$ or clique number $k$, where $2\le k\le n-1$,
then $\sigma_i(G)\le f_i(n,n-k+1)$, with equality when $G=B_{n,n-k+1}$.
\end{Theorem}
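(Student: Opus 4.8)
The plan is to reduce the problem to the tree bound of Lemma~\ref{treemax} by showing that any $n$-vertex connected graph $G$ with chromatic number (or clique number) $k$ has diameter at most $n-k+1$, and then to invoke Observation~\ref{l0} together with the monotonicity of $f_i$ in the diameter. First I would handle the clique number case: if $\omega(G)=k$, fix a clique $Q$ of size $k$; since $Q$ has diameter $1$, any shortest path in $G$ can meet $Q$ in at most two vertices without shortcutting, so a diametric path uses at most $n-(k-2)=n-k+2$ vertices and hence has length at most $n-k+1$. For the chromatic number case, the same bound $d\le n-k+1$ is classical: a diametric path in $G$ together with proper colorings forces $\chi(G)\le n-d+1$ (a shortest path of length $d$ can be $2$-colored, so the remaining $n-d-1$ vertices can absorb at most that many new colors), which rearranges to $d\le n-k+1$. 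Since $\chi(G)\ge\omega(G)$ always, it suffices to prove $d\le n-k+1$ under the weaker hypothesis $\omega(G)=k$, but it is cleaner to give the one-line argument in each case.

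Once $d:=\diam(G)\le n-k+1$, the argument concludes as follows. By Observation~\ref{l0}, $\sigma_i(G)\le\sigma_i(T)$ for any spanning tree $T$ of $G$; but deleting edges may increase the diameter, so instead I would argue directly. A tree bound is not literally available for $G$, so the right move is: pick a spanning tree $T$ of $G$ obtained by breadth-first search from an endpoint of a diametric path, so that $T$ still has diameter exactly $d$ (a BFS tree preserves distances from the root, hence the eccentricity $d$ of that root, and a tree with a vertex of eccentricity $d$ has diameter $\ge d$; it is $\le d$ since $T\supseteq$ has the same vertex set and deleting edges only increased distances—wait, that could push it above $d$). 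To avoid this subtlety I would instead use Observation~\ref{l0} more carefully: $\sigma_i(G)\le\sigma_i(T)$ for \emph{every} spanning tree $T$, and then bound $\sigma_i(T)\le f_i(n,\diam(T))$; since $f_i(n,\cdot)$ is increasing, I need a spanning tree whose diameter is not too large. A BFS tree from one end of a diametric path of $G$ has the property that every vertex keeps its distance to the root, so the root has eccentricity $\ge d$ in $T$, giving $\diam(T)\ge d$; and $\diam(T)\le n-1$ trivially, which is too weak.

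So the cleaner route, and the one I would actually take, is to avoid trees entirely for the upper bound: every vertex of $G$ has eccentricity at most $d\le n-k+1$, and I claim $\sigma_i(G)\le f_i(n,n-k+1)$ directly because $f_i(n,d)$ is, by Lemma~\ref{treemax}, exactly the maximum of $\sigma_i$ over the relevant range and because the extremal tree $\bT^{n,d}$ structure is replicated: in fact $B_{n,n-k+1}$ has diameter $n-k+1$, contains a clique of size $k-1+ \text{(one path vertex)}=k$ hmm, let me just verify $\omega(B_{n,n-k+1})=k$ and $\chi(B_{n,n-k+1})=k$ — the $n-d+1=k$ pairwise-adjacent vertices plus the path vertex they attach to form a $K_{k}$ wait that's $K_{k+1}$; actually $B_{n,d}$ has $n-d-1$ mutually adjacent added vertices plus $v_0,v_1$ also mutually adjacent to them, giving a clique of size $n-d+1=k$, and since the rest is a path this is the clique number, while $\chi=k$ as well. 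Thus $B_{n,n-k+1}$ is a legal instance. For the upper bound proper, the honest argument is: $\sigma_i(G)\le\sigma_i(G-E(Q)+\text{tree edges})$...

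The main obstacle, then, is exactly the interaction between edge deletion (which can only raise eccentricities, by Observation~\ref{l0}) and diameter control (which edge deletion can spoil). I expect the paper resolves it by the following clean observation: if $\diam(G)=d$, then $G$ has a spanning tree of diameter $d$ — namely, take a diametric path $v_0\cdots v_d$ and extend it to a spanning tree by BFS layered from this path, so that no vertex lands at distance $>d/2$-ish from the path and the tree diameter stays $d$; more simply, grow a spanning tree greedily so that each added vertex is attached to a neighbor of smallest eccentricity, keeping eccentricities $\le d$. Granting such a spanning tree $T$ with $\diam(T)\le d\le n-k+1$, Observation~\ref{l0} gives $\sigma_i(G)\le\sigma_i(T)\le f_i(n,\diam(T))\le f_i(n,n-k+1)$, using that $f_i(n,\cdot)$ is nondecreasing (Lemma~\ref{treemax}). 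Equality at $B_{n,n-k+1}$: this graph has diameter $n-k+1$, its vertex eccentricities match those of the extremal double-broom plus the clique vertices all at eccentricity $d$, and a direct check (or the computation in Example~\ref{Bnd}, noting $g_i$ and the $(n-d-1)$ added vertices each contributing the maximum $d$ or $d^2$) shows $\sigma_i(B_{n,n-k+1})=f_i(n,n-k+1)$. So the last step I would carry out is verifying $\omega(B_{n,n-k+1})=k=\chi(B_{n,n-k+1})$ and that its $\sigma_i$ value equals the tree bound $f_i(n,n-k+1)$; both are short computations.
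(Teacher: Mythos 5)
There is a genuine gap here, and you identified it yourself without closing it. Your bound $\diam(G)\le n-k+1$ is correct but does not suffice: Observation~\ref{l0} lets you pass to a spanning tree $T$, but the tree bound of Lemma~\ref{treemax} involves $\diam(T)$, not $\diam(G)$, and both of your proposed repairs fail. A connected graph need not have a spanning tree whose diameter equals (or is close to) its own: every spanning tree of $C_5$ is $P_5$, so for $n=5$, $k=3$ your route gives only $\sigma_0(C_5)\le\sigma_0(P_5)=3.2$, whereas the theorem asserts $\sigma_0(C_5)\le f_0(5,3)=2.6$; BFS trees have the same defect (a BFS tree of a cycle is a Hamiltonian path, and in general a BFS tree only controls distances to the root, so its diameter can be close to $2d$). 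Your fallback ``direct'' argument using only $\vareps_G(v)\le d$ for every $v$ yields $\sigma_0(G)\le d$ and $\sigma_1(G)\le nd^2$, which are strictly weaker than $f_0(n,d)$ and $f_1(n,d)$ -- indeed, excluding graphs in which all eccentricities equal $n-k+1$ is precisely what has to be proved, as Example~\ref{alldiam} shows such graphs exist for other parameter values.

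The missing idea is Brooks' Theorem. Unless $G$ is complete or an odd cycle (cases the paper disposes of first by direct computation, e.g.\ $\sigma_i(C_n)<f_i(n,n-2)$ when $k=3$), $\chi(G)=k$ forces $\Delta(G)\ge k$; the same holds when $\omega(G)=k$, since connectivity and $k\le n-1$ give some clique vertex a neighbor outside the clique. A spanning tree $T$ retaining all edges at a vertex of degree at least $k$ has $\Delta(T)\ge k$, and any path in such a tree misses at least $k-2$ neighbors of that vertex, so $\diam(T)\le n-k+1$ automatically -- no control of $\diam(G)$ is needed at all. Then $\sigma_i(G)\le\sigma_i(T)\le f_i(n,\diam(T))\le f_i(n,n-k+1)$ by Observation~\ref{l0}, Lemma~\ref{treemax}, and the monotonicity of $f_i(n,\cdot)$. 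Your verifications that $\omega(B_{n,n-k+1})=\chi(B_{n,n-k+1})=k$ and that $\sigma_i(B_{n,n-k+1})=f_i(n,n-k+1)$ are correct and match the paper's equality discussion.
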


\begin{proof}
For $k=3$ and odd $n$, note that $\sigma_0(C_n)=(n-1)/2$ and
$\sigma_1(C_n)=n(n-1)^2/4$; these values are smaller than for $B_{n,n-2}$.
Hence we may assume that $G$ is not a complete graph and not an odd cycle.

Therefore, for chromatic number $k$, Brooks' Theorem implies
$\Delta(G)\ge \chi(G)=k$, where $\Delta(G)$ is the maximum degree of $G$.
The same is true for connected graphs with clique number $k$.  Hence $G$ has a
vertex of degree at least $k$ and thus a spanning tree $T$ with maximum degree
at least $k$.  When an $n$-vertex tree has maximum degree at least $k$, there
are at least $k-2$ vertices outside any path, and hence $\diam(T)\le n-k+1$.

Since $T$ is a subgraph of $G$, Observation~\ref{l0} yields
$\sigma_i(G)\le \sigma_i(T)$.
By Lemma~\ref{treemax}, since the bounds $f_i(n,d)$ strictly increase with
increasing $d$, we have $\sigma_i(T)\le f_i(n,n-k+1)$, with equality for
$T\in \bT^{n,n-k+1}$.  Finally, the vertices of $B_{n,n-k+1}$ have the same
eccentricities as the vertices in the member of $\bT^{n,n-k+1}$ that has
$k-1$ leaves with a common neighbor, so $\sigma_i(B_{n,n-k+1})=f_i(n,n-k+1)$,
and $B_{n,n-k+1}$ has chromatic number and clique number $k$.
\end{proof}

Next we consider $\sigma_2$, where we must pay close attention to the number
of edges.  We recall an elementary lemma.

\begin{Lemma}[{\rm \cite{EK}}] \label{edge}
Every $n$-vertex connected graph with chromatic number $k$ (or clique number
$k$) has at least $\CH k2+n-k$ edges.
\end{Lemma}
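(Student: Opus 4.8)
The plan is to handle the clique-number case and the chromatic-number case separately, in each case bounding the edges of a dense ``core'' and then adding the edges forced by connectivity.

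First I would dispose of the clique-number case directly. If $G$ has clique number $k$, fix a clique $Q$ of size $k$; the $\binom{k}{2}$ edges inside $Q$ are all present in $G$. Since $G$ is connected, I can enumerate the $n-k$ vertices outside $Q$ as $z_1,\dots,z_{n-k}$ so that each $z_j$ has, in $G$, a neighbour in $V(Q)\cup\{z_1,\dots,z_{j-1}\}$; equivalently, contracting $Q$ to a single vertex produces a connected graph on $n-k+1$ vertices, hence with at least $n-k$ edges, and none of these lies inside $Q$. Adding them to the $\binom{k}{2}$ edges of $Q$ gives $|E(G)|\ge\binom{k}{2}+n-k$.

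For chromatic number $k$, I would pass to a $k$-vertex-critical subgraph $H$ of $G$ (one with $\chi(H)=k$ but $\chi(H-v)<k$ for all $v\in V(H)$), which exists because $\chi(G)=k$. The usual extension argument gives $\delta(H)\ge k-1$: a vertex with at most $k-2$ neighbours could be coloured after properly $(k-1)$-colouring the rest of $H$. Hence, with $p=|V(H)|\ge k$, we get $2|E(H)|\ge(k-1)p$. Now the same greedy step as before — repeatedly adjoining a vertex of $G$ outside the current set together with an edge of $G$ to it, which is possible since $G$ is connected — shows $|E(G)|\ge|E(H)|+(n-p)\ge(k-1)p/2+n-p$. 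For $k\ge3$ the coefficient $(k-3)/2$ of $p$ is nonnegative, so the right-hand side is minimized at $p=k$, where it equals exactly $\binom{k}{2}+n-k$. The remaining cases are trivial: for $k=2$ the graph $G$ is connected with at least one edge, so $|E(G)|\ge n-1=\binom{2}{2}+n-2$, and $k=1$ forces $n=1$. (One could also deduce the clique version from the chromatic one, since $\chi(G)\ge\omega(G)$ and $j\mapsto\binom{j}{2}+n-j$ is nondecreasing for $j\ge1$.)

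I do not expect a real obstacle; the only delicate point is that the estimate $(k-1)p/2+n-p$ points the right way under increasing $p$ only when $k\ge3$, so the cases $k\in\{1,2\}$ have to be checked by hand, and both are immediate from connectivity.
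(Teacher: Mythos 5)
Your proof is correct and follows essentially the same route as the paper's: pass to a $k$-critical subgraph $H$ with $p$ vertices and minimum degree at least $k-1$, so $|E(H)|\ge (k-1)p/2$, add $n-p$ edges to attach the remaining vertices by connectivity, and observe that the bound is minimized at $p=k$ when $k\ge3$. Your explicit treatment of the clique-number case and of $k\in\{1,2\}$ just fills in details the paper leaves implicit.
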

\begin{proof}
A $k$-critical subgraph with $p$ vertices must have at least $(k-1)p/2$ edges,
plus $n-p$ more to incorporate the other vertices.  For $k\ge3$, the number
of edges is minimized at $p=k$.

The argument holds more directly for graphs with clique number $k$.
\end{proof}

\begin{Theorem}\label{chrom2}
If $G$ is an $n$-vertex connected graph with chromatic number $k$ (or clique
number $k$), where $2\le k\le n-1$, then
$$
\sigma_2(G)\ge\begin{cases}
2n+2k^2-6k+2& \mbox{if $2\le k\le \FL{\FR{n+1}{2}}$} \\
(8k-3)n-2n^2-6k^2+4k-1&\mbox{if $\FL{\FR{n+1}{2}} <k< \CL{\FR{2n+1}3}$}\\
2(k-1)n-\frac{3k^2}{2}+\frac{5k}{2}-1&\mbox{if $\CL{\FR{2n+1}3}\le k\le n-1$.}
\end{cases}
$$
Also, equality holds in the bound if and only if
$G= K_s\gjoin(K_{k-s}+\Kb_{n-k})$, where $s=1$ in the first case,
$s$ is $4k-2n-2$ or $4k-2n-1$ in the second case, and $s=k-1$ in the third case.
\end{Theorem}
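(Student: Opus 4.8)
The plan is to organize the argument around $s$, the number of dominating vertices of $G$; since $k<n$, the reasoning in the proof of Theorem~\ref{chrom01} gives $0\le s\le k-1$.  Set
$$\phi(t)=\CH t2+2t(n-t)+4\CH{k-t}2.$$
I will show that the bound claimed in the theorem equals $\min_{1\le t\le k-1}\phi(t)$, that $\sigma_2(G)\ge\phi(s)$ whenever $s\ge1$, and that $\sigma_2(G)>\phi(1)$ when $s=0$; together these give the inequality, and tracking when equality can hold yields the extremal graphs.

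Suppose first that $s\ge1$.  A dominating vertex forces $\diam(G)\le2$, so each of the $s$ dominating vertices has eccentricity $1$, while every other vertex, having a non-neighbour, has eccentricity exactly $2$.  The dominating vertices form a clique and are adjacent to all of the remaining $n-s$ vertices, so
$$\sigma_2(G)=\CH s2+2s(n-s)+4m',$$
where $m'$ is the number of edges of the subgraph $H$ induced by the non-dominating vertices.  Deleting the dominating vertices lowers the chromatic number (resp.\ clique number) by at most $s$: a proper colouring of $H$ with fewer than $k-s$ colours would extend to one of $G$ with fewer than $k$ colours by giving each dominating vertex a fresh colour, and a $k$-clique of $G$ meets $H$ in a clique of size at least $k-s$.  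Hence $H$ has chromatic (resp.\ clique) number at least $k-s$, so, as in the proof of Lemma~\ref{edge}, $H$ contains $K_{k-s}$ or a $(k-s)$-critical subgraph, whence $m'\ge\CH{k-s}2$ and therefore $\sigma_2(G)\ge\phi(s)$.

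If instead $s=0$, then every vertex has a non-neighbour, so every edge contributes at least $4$ to $\sigma_2$, and Lemma~\ref{edge} gives $\sigma_2(G)\ge4m\ge4\bigl(\CH k2+n-k\bigr)=\phi(1)+(2n-2)>\phi(1)$, so $G$ is not extremal here.  It remains to minimize $\phi$.  Writing $\phi(t)=\tfrac12 t^2+\bigl(2n-4k+\tfrac32\bigr)t+(2k^2-2k)$ exhibits an upward parabola whose vertex $t^*=4k-2n-\tfrac32$ is a half-integer, never equal to $1$ or to $k-1$.  When $t^*<1$ the minimum over $[1,k-1]$ is at $t=1$, and "$t^*<1$" translates precisely to $2\le k\le\FL{(n+1)/2}$; evaluating $\phi(1)$ gives the first displayed bound.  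When $t^*>k-1$ the minimum is at $t=k-1$, the range $k\ge\CL{(2n+1)/3}$, and $\phi(k-1)$ gives the third bound.  Otherwise the minimum is attained at both $t=4k-2n-2$ and $t=4k-2n-1$, the two integers at distance $\tfrac12$ from $t^*$ (so $\phi$ takes the same value at both), both lying in $[1,k-1]$, and their common value gives the second bound.

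For the equality characterization, equality forces $s\ge1$, forces $s$ to be a minimizer of $\phi$, and forces $m'=\CH{k-s}2$; the last condition says $H$ has chromatic (resp.\ clique) number $k-s$ with only $\CH{k-s}2$ edges, so its $(k-s)$-critical (resp.\ maximum-clique) subgraph uses all the edges and must be $K_{k-s}$, whence $H=K_{k-s}+\Kb_{n-k}$.  Since the dominating vertices are adjacent to all other vertices, $G=K_s\gjoin(K_{k-s}+\Kb_{n-k})$ for the relevant value of $s$; conversely this graph has exactly $s$ dominating vertices, diameter $2$, chromatic and clique number $k$, and $\sigma_2$ equal to $\phi(s)$, so it achieves the bound.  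I expect the main obstacle to be bookkeeping rather than ideas: one must translate the three floor/ceiling thresholds of the statement into the conditions on $t^*$ (watching the parity of $n$) and check that $\phi$ evaluated at the corresponding $t$ reproduces each of the three displayed polynomials, with care at the boundary between the second and third cases, where $4k-2n-1$ can coincide with $k-1$ and the two formulas must agree.
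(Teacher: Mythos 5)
Your proposal is correct and follows essentially the same route as the paper: stratify by the number $s$ of dominating vertices, bound $\sigma_2(G)$ below by the quadratic $\phi(s)=\binom{s}{2}+2s(n-s)+4\binom{k-s}{2}$ via $|E(H)|\ge\binom{k-s}{2}$, minimize over $1\le s\le k-1$ using the half-integer vertex $4k-2n-\tfrac32$, and dispose of $s=0$ by the edge count of Lemma~\ref{edge}. The evaluations you defer ($\phi$ at $1$, at $4k-2n-2$, and at $k-1$) do reproduce the three displayed polynomials, so nothing essential is missing.
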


\begin{proof}
It suffices to prove the lower bound for $n$-vertex connected graphs with
chromatic number $k$ and achieve it by a $n$-vertex connected graph with
clique number $k$.  For the lower bound, let $S$ be the set of dominating
vertices in $G$, and let $s=|S|$.

First suppose $s\ge1$, so $s$ vertices have eccentricity $1$ and the others
have eccentricity $2$.  The value of $\vareps_G(u)\vareps_G(v)$ is $1$ when
$u,v\in S$, is $2$ when exactly one of $u,v$ is in $S$, and is $4$ when
$u,v\notin S$.  Also $G= K_s\gjoin H$ with $\chi(H)=k-s$, but $H$ need not be
connected, so we can only guarantee edges joining color classes, yielding
$|E(H)|\ge\CH{k-s}2$.  Thus
\begin{eqnarray*}
\sigma_2(G)&\ge&\CH s2+2 s(n-s)+4\CH{k-s}2\\
&=&\CH s2+2s(n-2k+1)+2k(k-1)\\
&=&h(s)+2k(k-1),
\end{eqnarray*}
where $h(s)=\CH s2+2s(n-2k+1)$.

For $G$ with fixed $n$ and $k$, we may have various $s$.  We obtain a lower
bound by choosing $s$ to minimize $h(s)$, which may occur at different values
depending on the relationship between $n$ and $k$.  Achieving equality in the
bound then requires $G= K_s\gjoin(K_{k-s}+\Kb_{n-k})$.

Note that $h(s)$ is quadratic in $s$, minimized at $s=4k-2n-\frac{3}{2}$.  Thus
$4k-2n-1$ and $4k-2n-2$ are candidates for the minimizing value of $s$ (and $h$
has the same value at the two points), but also we require $1\le s\le k-1$.

If $k\le\FL{\FR{n+1}2}$, then $4k-2n-1\le 1$, so $h(s)$ is minimized only at
$s=1$.  If $\FL{\FR{n+1}{2}}<k<\CL{\FR{2n+1}{3}}$, then $4k-2n-1>1$ and
$4k-2n-2<k-1$, so $h(s)$ is minimized at $4k-2n-2$ and $4k-2n-1$.  If
$k\ge \CL{\FR{2n+1}{3}}$, then $4k-2n-2\ge k-1$, so $h(s)$ is minimized only
at $s=k-1$.

It remains only to show that for graphs with no dominating vertices, the value
of $\sigma_2(G)$ with given $n$ and $k$ is always larger than the minimum
of $\sigma_2(G)$ for $s\ge 1$.  When $s=0$, each vertex in $G$ has eccentricity
at least $2$, so $\vareps_G(u)\vareps_G(v)\ge4$ for every edge $uv\in E(G)$.
By Lemma~\ref{edge}, $|E(G)|\ge \CH k2+n-k$.  Thus
\begin{eqnarray*}
\sigma_2(G)&\ge& 4\left[ \CH k2+n-k\right]
~=~4n+2k^2-6k\\
&>&2n+2+2k^2-6k ~=~ h(1)+2k(k-1) .
\end{eqnarray*}
Since the value of $h(1)$ is always at least the smallest value of $h(s)$,  the
value of $\sigma_2(G)$ when $s=0$ is always larger than the minimum value of
$\sigma_2(G)$ for $s\ge 1$, as desired.
\end{proof}

For large values of $\sigma_2(G)$ when $k$ is fixed, we present a construction.
We first stratify by the diameter.

\begin{Example}\label{kdhigh}
As a building block, we use the \emph{Tur\'an graph} $T_{n,r}$, which
generalizes the complete graph $K_r$.  It consist of $n$ vertices grouped into
independent sets of size $\FL{n/r}$ or $\CL{n/r}$, with vertices adjacent when
they belong to different such sets.  As proved originally by Tur\'an~\cite{T},
it is well known that $T_{n,r}$ has the most edges among $n$-vertex graphs with
chromatic number $r$ (or clique number $r$).

By arranging for the vertices in such a subgraph to have large eccentricity, we
can construct a graph with given chromatic number and clique number for which
$\sigma_2$ is large.  Let $U_{n,k,d}$ be the graph obtained from
$T_{n-d+1,k}+P_{d-1}$ by making one endpoint $z$ of the copy of $P_{d-1}$
adjacent to all the vertices in a largest independent set in the Tur\'an graph.
The vertex $z$ has eccentricity $d-2$ (when $d\ge4$), its neighbors in the copy
of $T_{n-d+1,k}$ have eccentricity $d-1$, and the other vertices in the copy of
$T_{n-d+1,k}$ have eccentricity $d$.  The diameter is $d$.  The graph $B_{n,d}$
discussed earlier is $U_{n,n-d+1,d}$.

To compute $\sigma_2(U_{n,k,d})$ exactly when $d\ge4$, let $n'=n-d+1$.
In order to count the contributions from $T_{n',k}$ more simply, we subtract
from $g_2(d)$ the contributions of the last two edges on that end of a
diametric path, which are $d(d-1)$ and $(d-1)(d-2)$.  We then count $d^2$
for each edge of the copy of $T_{n',k}$, subtracting $d$ for each edge that
actually contributes only $d(d-1)$.  Altogether,
$$
\sigma_2(U_{n,k,d})=g_2(d)+t_{n',k}d^2-2(d-1)^2
-d\CL{\FR{n'}k}\left(n'-\CL{\FR{n'}k}\right)+\CL{\FR{n'}k}(d-1)(d-2) .
$$
The number of edges in $T_{n',k}$ is approximately $(1-1/k)n'^2/2$, with
equality when $k$ divides $n'$.  For fixed $k$, with $d$ growing as a constant
fraction of $n$, asymptotically,
$$
\sigma_2(U_{n,k,d})= \FR12(1-1/k)(n-d+1)^2d^2+O(d^3+d(n-d)).
$$
For fixed $k$ this value is maximized by setting $d=(n+1)/2$,
and we obtain a graph $G$ with
$\sigma_2(G)=\FR 1{32}(1-1/k)n^4+O(n^3)$.
\end{Example}

Recall from Corollary~\ref{s2alln} that over all $n$-vertex graphs the
maximum of $\sigma_2(G)$ is $n^4/32+O(n^3)$.
Example~\ref{kdhigh} shows that we can reach asymptotically half of this
bound already using bipartite graphs.  If $k$ grows as an unbounded function of
$n$, then for any $\epsilon$ the ratio of $\sigma_2(G)$ to the upper bound
exceeds $1-\epsilon$ for sufficiently large $n$.  Nevertheless, it seems
likely that, for fixed $k$, the construction $U_{n,k,n/2}$ is asymptotically
optimal, meaning that the upper bound needs improvement.

\section{Bounds in terms of matching number}

Recall that $\alpha'(G)$ denotes the matching number, the maximum size of a
matching in $G$.  Always $\alpha'(G)\ge1$ when $G$ is a connected graph with at
least two vertices, with equality if and only if $G$ is a star or the triangle
$K_3$, so we restrict attention to $\alpha'(G)\ge2$.  Always
$\alpha'(G)\le\FL{n/2}$, and when equality holds it is possible
for all vertices to be dominating vertices, in which case $G= K_n$.

\begin{Lemma}\label{tm}
If $G$ is an $n$-vertex connected graph with matching number $k$, where
$2\le k<\FL{n/2}$, then the number of dominating vertices in $G$ is at most
$k$.
\end{Lemma}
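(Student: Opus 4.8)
The plan is to argue by contradiction using the structure forced by a large set of dominating vertices together with a bound on the matching number. Suppose $G$ has at least $k+1$ dominating vertices, say the set $D$ with $|D|\ge k+1$. Every vertex of $D$ is adjacent to every other vertex of $G$, so $G$ contains $K_{k+1}$ as a subgraph (on $D$ itself), and more importantly the induced subgraph on $D$ is a clique and each vertex of $D$ sees all of $V(G)\setminus D$. Since $k<\FL{n/2}$, we have $n\ge 2k+1$, so $|V(G)\setminus D|\ge n-|D|$; I want to show this forces a matching of size $k+1$, contradicting $\alpha'(G)=k$.

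First I would handle the case $|V(G)\setminus D|\ge k+1$. Then, because every vertex of $D$ is adjacent to every vertex outside $D$, and $|D|\ge k+1$, we can greedily pick $k+1$ disjoint edges each joining a distinct vertex of $D$ to a distinct vertex outside $D$ (this is possible since both sides have at least $k+1$ vertices and the bipartite graph between them is complete). That is a matching of size $k+1$, a contradiction. Second, suppose $|V(G)\setminus D|\le k$. Since $|D|\ge k+1$ and $n=|D|+|V(G)\setminus D|\ge 2k+1$, combined with $|V(G)\setminus D|\le k$ we get $|D|\ge n-k\ge k+1$. Now take a maximum matching $M$ in $G$ with $|M|=k$. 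The matching $M$ covers at most $2k$ vertices; since $|D|\ge k+1$ and $D$ is a clique, if $D$ contained two $M$-unsaturated vertices they would form an augmenting edge, so at most one vertex of $D$ is $M$-unsaturated. Hence at least $|D|-1\ge k$ vertices of $D$ are saturated by the $k$ edges of $M$, which uses up essentially all the endpoints of $M$; I would then count carefully to show a vertex outside $D$ is left unsaturated while a vertex of $D$ is also unsaturated or two such can be rerouted, producing an augmenting path of length one or three and contradicting maximality. The cleanest version: if some $x\notin D$ and some $y\in D$ are both $M$-unsaturated, then $xy\in E(G)$ gives a larger matching; if instead $V(G)\setminus D$ is entirely saturated, then all $|V(G)\setminus D|$ of them are matched, and since $|D|\ge k+1$ with only $k$ edges total, at least two vertices of $D$ are unsaturated, again giving an augmenting edge inside the clique $D$.

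The main obstacle I expect is the bookkeeping in the second case, making sure the inequalities $|D|\ge k+1$, $|V(G)\setminus D|\le k$, and $|M|=k$ genuinely force an unsaturated pair that spans an edge; one must be careful that a vertex of $D$ could be matched to a vertex outside $D$, so the endpoint count has to be done by parity/counting rather than by a hand-wavy ``the clique is too big.'' A slick way around this is to observe directly: the $k$ edges of a maximum matching touch at most $2k$ vertices, but $D$ alone has $\ge k+1$ vertices and $V(G)\setminus D$ has $\ge 1$ vertex (since $|D|\le n-1<n$), so if every vertex outside $D$ were saturated, the matching edges incident to $V(G)\setminus D$ would already be at least $|V(G)\setminus D|$ many, leaving at most $k-|V(G)\setminus D|$ edges entirely inside $D$ to saturate $\ge k+1$ vertices of $D$, which saturates at most $2(k-|V(G)\setminus D|)$ of them; since $|D|\ge n-|V(G)\setminus D|\ge 2k+1-|V(G)\setminus D|>2(k-|V(G)\setminus D|)$, some vertex of $D$ is unsaturated, and being in a clique with another unsaturated vertex (or with an unsaturated vertex outside $D$) yields the augmenting edge. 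I would present the argument in this counting form to keep it short and avoid case analysis on where individual matching edges go.
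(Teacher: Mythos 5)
Your overall strategy (contradiction via augmenting a maximum matching using the clique of dominating vertices) is sound, and your Case 1 is correct, but the final form of your Case 2 argument has a genuine gap: the case split is not exhaustive. Writing $D$ for the set of dominating vertices and $M$ for a maximum matching of size $k$, your ``cleanest version'' treats (i) the situation where some $x\notin D$ and some $y\in D$ are both unsaturated, and (ii) the situation where $V(G)\setminus D$ is entirely saturated. But the negation of (i) is that \emph{either} all of $V(G)\setminus D$ is saturated \emph{or} all of $D$ is saturated, so you miss the case where every vertex of $D$ is saturated while some vertices of $V(G)\setminus D$ are not. That case cannot be waved away: if all of $D$ (with $|D|\ge k+1>k$) is saturated by the $k$ edges of $M$, then some edge $uv$ of $M$ has both $u,v\in D$, and since $n\ge 2k+2$ there are at least two unsaturated vertices, both outside $D$; here no augmenting \emph{edge} joins two unsaturated vertices, and you need the length-three augmentation you mention only in passing (replace $uv$ by $ux$ and $vy$ for unsaturated $x,y$, which are edges because $u,v$ dominate) but never actually carry out. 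Your ``slick'' counting version does not rescue this either: it operates only under the hypothesis that every vertex outside $D$ is saturated, and it also contains an arithmetic slip --- the matching edges incident to $V(G)\setminus D$ number at least $|V(G)\setminus D|/2$, not $|V(G)\setminus D|$, since a matching edge may have both endpoints outside $D$.

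For comparison, the paper avoids the case analysis entirely and argues directly rather than by contradiction: since $k<\lfloor n/2\rfloor$ forces at least two vertices to be uncovered by $M$, an uncovered vertex cannot be dominating (it would be adjacent to another uncovered vertex, enlarging $M$), and no edge of $M$ can have two dominating endpoints (they could be rerouted to two uncovered vertices, again enlarging $M$); hence each of the $k$ edges of $M$ carries at most one dominating vertex. This rerouting step is exactly what your write-up is missing, and inserting it to cover the omitted case would complete your argument.
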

\begin{proof}
Let $s$ be the number of dominating vertices in $G$.  When $k<\FL{n/2}$, a
maximum matching $M$ leaves at least two vertices uncovered, and hence no
uncovered vertex can be a dominating vertex.  If both endpoints of an edge in
$M$ are dominating vertices, then they can instead be matched to uncovered
vertices to obtain a larger matching.  Hence at most one vertex from each edge
of $M$ (and no other vertices) can be a dominating vertex.
\end{proof}

Note that $\sigma_0\ge1$ and $\sigma_1\ge n$ for every $n$-vertex connected
graph, with equality only for $K_n$.  Hence for the study of lower bounds
on $\sigma_0$ and $\sigma_1$ in terms of the matching number,  we may restrict
our attention to graphs $G$ with $\alpha'(G)<\FL{n/2}$.

\begin{Proposition} \label{matching1}
If $G$ is an $n$-vertex connected graph with matching number $k$, where
$2\le k <\FL{n/2}$, then $\sigma_0(G)\ge 2-\frac{k}{n}$ and
$\sigma_1(G)\ge 4n-3k$.  In each bound, equality holds if and only if
$G= K_k \gjoin \Kb_{n-k}$.
\end{Proposition}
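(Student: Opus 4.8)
The plan is to mimic the structure used for the chromatic/clique number lower bounds in Theorem~\ref{chrom01}, reducing to Observation~\ref{ttt} via a count of dominating vertices. First I would apply Lemma~\ref{tm}: since $2\le k<\FL{n/2}$, the number $s$ of dominating vertices in $G$ satisfies $s\le k$. By Observation~\ref{ttt} we then get $\sigma_0(G)\ge 2-\frac sn\ge 2-\frac kn$ and $\sigma_1(G)\ge 4n-3s\ge 4n-3k$, which establishes both bounds. The remaining work is the equality characterization.

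For the equality analysis, suppose $\sigma_0(G)=2-\frac kn$ (the argument for $\sigma_1$ is identical). By Observation~\ref{ttt}, equality in $\sigma_0(G)\ge 2-\frac sn$ forces $\diam(G)\le 2$, and then $2-\frac sn=2-\frac kn$ forces $s=k$; so $G$ has exactly $k$ dominating vertices and diameter at most $2$. Let $S$ be the set of dominating vertices; then $G=K_k\gjoin H$ where $H=G-S$ has $n-k$ vertices and no dominating vertex of $G$ lying in $H$. The key point is that $H$ must be edgeless: any edge $xy$ of $H$, together with $k$ disjoint edges from $S$ to distinct vertices of $V(H)\setminus\{x,y\}$ (available since $|V(H)|=n-k\ge k+1$ as $k<\FL{n/2}$... wait, I should double-check: $k<\FL{n/2}$ gives $n-k>\CL{n/2}\ge k+1$ when... actually $n-k\ge n-\FL{n/2}+1=\CL{n/2}+1>k$), would yield a matching of size $k+1$, contradicting $\alpha'(G)=k$. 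Hence $H=\Kb_{n-k}$ and $G=K_k\gjoin\Kb_{n-k}$. Conversely, for $G=K_k\gjoin\Kb_{n-k}$ one checks directly that $\alpha'(G)=k$ (a maximum matching pairs the $k$ dominating vertices with $k$ of the $n-k$ independent vertices, and no larger matching exists since every edge meets $S$), that $\diam(G)=2$ since $n-k\ge2$, and that there are exactly $k$ dominating vertices; so Observation~\ref{ttt} gives equality in both bounds.

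I expect the only mildly delicate step to be the matching-number bookkeeping in the converse and in ruling out edges of $H$: one must be careful that $k<\FL{n/2}$ genuinely provides enough uncovered vertices to enlarge a matching, and that $G=K_k\gjoin\Kb_{n-k}$ indeed has matching number exactly $k$ rather than something larger when $n-k$ is large. Both follow from $n-k>k$, which is exactly what $k<\FL{n/2}$ gives. No real obstacle is anticipated; this is essentially the matching-number analogue of the clique-number case, with Lemma~\ref{tm} playing the role that "a clique number $k$ graph has at most $k-1$ dominating vertices" played in Theorem~\ref{chrom01}.
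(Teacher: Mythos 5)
Your proposal is correct and follows essentially the same route as the paper: Lemma~\ref{tm} plus Observation~\ref{ttt} give the bounds, and equality forces $s=k$, so $G=K_k\gjoin H$ and any edge of $H$ would permit a matching of size $k+1$, forcing $H=\Kb_{n-k}$. One tiny point of bookkeeping: the inequality you actually need for that last step is $n-k\ge k+2$ (so that $k$ vertices of $V(H)\setminus\{x,y\}$ remain to be matched into $S$), not merely $n-k\ge k+1$, but your displayed chain $n-k\ge\CL{n/2}+1$ together with $k\le\FL{n/2}-1$ does deliver $n-k\ge k+2$, so there is no gap.
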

\begin{proof}
Let $s$ be the number of dominating vertices in $G$.  By Observation~\ref{ttt}
and Lemma~\ref{tm}, we have $\sigma_0(G)\ge 2-\FR sn\ge 2-\FR kn$ and
$\sigma_1(G)\ge 4n-3s\ge 4n-3k$, since $2\le k<\FL{n/2}$.
To achieve equality in either bound, we must have exactly $k$ dominating
vertices, so $G=K_k\gjoin H$.  Any edge in $H$ permits a larger matching,
since $k<\FL{n/2}$ implies $n-k\ge k+2$.  Hence equality requires
$G= K_k \gjoin \Kb_{n-k}$.
\end{proof}

Now we consider the upper bounds on $\sigma_0$ and $\sigma_1$ in terms of
matching number.  Again $f_i(n,d)$ denotes the value of $\sigma_i(T)$
for $T\in \bT^{n,d}$, the maximum over $n$-vertex trees with diameter $d$,
as stated in Lemma~\ref{treemax}.

\begin{Lemma} \label{l3} \textnormal{\cite{DZT, TZ1}}
Fix $i\in\{0,1\}$.  Let $T$ be an $n$-vertex tree with matching number $k$.
If $2\le k<\FL{n/2}$, then $\sigma_i(T)\le f_i(n,2k)$, with equality if and
only if $T\in\bT^{n,2k}$. If $k=\FL{n/2}$, then $\sigma_i(T)\le \sigma_i(P_n)$,
with equality if and only if $T=P_n$.
\end{Lemma}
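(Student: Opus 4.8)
The plan is to reduce both statements to Lemma~\ref{treemax}, using two elementary facts about matchings in trees: (i) a tree with matching number $k$ has diameter at most $2k$, and (ii) every double-broom in $\bT^{n,2k}$ has matching number exactly $k$. Granting these, the rest is bookkeeping.

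For (i): if $T$ is a tree with $\diam(T)=d$, then a diametric path is a copy of $P_{d+1}$ in $T$, and $P_{d+1}$ contains a matching of size $\FL{(d+1)/2}=\CL{d/2}$; hence $\alpha'(T)\ge\CL{d/2}$, so $\alpha'(T)=k$ forces $d\le 2k$. For (ii): realize a double-broom of diameter $2k$ as a spine $\la u_1,\dots,u_{2k-1}\ra$ together with at least one pendant leaf at $u_1$ and at least one at $u_{2k-1}$. The set $\{u_1,u_3,\dots,u_{2k-1}\}$ has $k$ elements and covers every edge of the tree (each spine edge has an odd-indexed endpoint, and every pendant edge meets $u_1$ or $u_{2k-1}$), so $\alpha'\le k$; together with $\alpha'\ge\CL{(2k)/2}=k$ from (i), the matching number is exactly $k$. (Alternatively one sees this by greedily matching a pendant edge at each end-bundle and then matching the interior of the spine.)

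Given these, suppose first that $2\le k<\FL{n/2}$ and $T$ is an $n$-vertex tree with $\alpha'(T)=k$. Then $k\le\FL{n/2}-1$, so $2\le\diam(T)\le 2k\le n-2$ by (i). Lemma~\ref{treemax} gives $\sigma_i(T)\le f_i(n,\diam(T))$, and since $f_i(n,\cdot)$ is strictly increasing on $\{2,\dots,n-1\}$ this is at most $f_i(n,2k)$. If equality holds, then the chain $\sigma_i(T)\le f_i(n,\diam(T))\le f_i(n,2k)$ must be tight throughout: strict monotonicity of $f_i$ forces $\diam(T)=2k$, and then the equality clause of Lemma~\ref{treemax} forces $T\in\bT^{n,2k}$. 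Conversely every $T\in\bT^{n,2k}$ has $\diam(T)=2k\le n-2$, so $\sigma_i(T)=f_i(n,2k)$ by Lemma~\ref{treemax}, while (ii) puts it in the relevant class. For the case $k=\FL{n/2}$ the matching hypothesis is not even needed for the bound itself: any $n$-vertex tree $T$ satisfies $\sigma_i(T)\le f_i(n,n-1)=\sigma_i(P_n)$ by Lemma~\ref{treemax} (using $\bT^{n,n-1}=\{P_n\}$), with equality iff $\diam(T)=n-1$, i.e.\ $T=P_n$; and $\alpha'(P_n)=\FL{n/2}=k$, so $P_n$ indeed has the prescribed matching number.

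I do not expect a serious obstacle: the only points needing genuine care are the verification in (ii) that a diameter-$2k$ double-broom has matching number exactly $k$ (the bound $\alpha'\ge k$ is automatic from the diametric path, but $\alpha'\le k$ requires the explicit vertex cover or the greedy argument), and the arithmetic check $2k\le n-2$, which is what keeps $\diam(T)$ inside the range $\{2,\dots,n-1\}$ where $f_i(n,\cdot)$ is known to be strictly monotone.
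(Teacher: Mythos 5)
Your proposal is correct. Note that the paper itself gives no proof of this lemma; it is quoted from \cite{DZT,TZ1}, so there is nothing internal to compare against. Your derivation is the natural one and is exactly parallel to the technique the paper does spell out for the chromatic-number upper bound: convert the structural hypothesis into a diameter bound ($\alpha'(T)=k$ forces $\diam(T)\le 2k$ via the matching of size $\CL{d/2}$ on a diametric path), then invoke Lemma~\ref{treemax} together with the strict monotonicity of $f_i(n,\cdot)$ on $2\le d\le n-1$. The two points you flag as needing care are indeed the only ones: the arithmetic $2k\le n-2$ (which follows from $k\le\FL{n/2}-1$ and keeps the diameter in the range where monotonicity is asserted), and the verification that a diameter-$2k$ double-broom has matching number exactly $k$, which is required for the ``equality is attained'' half of the characterization; your vertex cover $\{u_1,u_3,\dots,u_{2k-1}\}$ of size $k$, combined with $\alpha'\ge k$ from the diametric path, settles that cleanly. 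The case $k=\FL{n/2}$ is handled correctly as well, since the bound $\sigma_i(T)\le f_i(n,n-1)=\sigma_i(P_n)$ holds for all $n$-vertex trees with equality only for $P_n$, and $\alpha'(P_n)=\FL{n/2}$ places $P_n$ in the class. I see no gap.
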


\begin{Theorem}
Fix $i\in\{0,1\}$.  Let $G$ be an $n$-vertex connected graph with  matching
number $k$.  If $2\le k<\FL{n/2}$, then $\sigma_i(G)\le f_i(n,2k)$, with
equality for $G\in\bT^{n,2k}$.  If $k=\FL{n/2}$, then
$\sigma_i(G)\le \sigma_i(P_n)$, with equality if and only if $G= P_n$.
\end{Theorem}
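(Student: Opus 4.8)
The plan is to mimic the edge-deletion argument used above for chromatic and clique number, feeding it into Lemma~\ref{l3} for trees. Given a connected graph $G$ with $\alpha'(G)=k$, fix a spanning tree $T$ of $G$. Iterating Observation~\ref{l0} over the edges of $G$ not in $T$ gives $\sigma_i(G)\le\sigma_i(T)$, and $T\subseteq G$ gives $\alpha'(T)\le k$. Set $k'=\alpha'(T)$. If $k'\ge 2$ then $k'<\FL{n/2}$ whenever $k<\FL{n/2}$, and $k'=\FL{n/2}$ is allowed when $k=\FL{n/2}$, so Lemma~\ref{l3} applies; if $k'=1$ then $T$ is the star $S_n\in\bT^{n,2}$ and $\sigma_i(T)=f_i(n,2)$. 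Hence in all cases $\sigma_i(T)\le f_i(n,2k')$ when $k'<\FL{n/2}$, and $\sigma_i(T)\le\sigma_i(P_n)$ (with equality only for $T=P_n$) when $k'=\FL{n/2}$. In the range $2\le k<\FL{n/2}$ we have $2\le 2k'\le 2k\le n-1$, so the strict monotonicity of $f_i(n,\cdot)$ on $\{2,\dots,n-1\}$ from Lemma~\ref{treemax} yields $\sigma_i(G)\le f_i(n,2k')\le f_i(n,2k)$. When $k=\FL{n/2}$: if $k'=\FL{n/2}$ we get $\sigma_i(G)\le\sigma_i(T)\le\sigma_i(P_n)$ directly, and if $k'<\FL{n/2}$ then $2k'\le n-2$, so $\sigma_i(G)\le f_i(n,2k')\le f_i(n,n-2)<f_i(n,n-1)=\sigma_i(P_n)$.

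For the equality assertions, I would first note that $\sigma_i(P_n)=f_i(n,n-1)$ and $\alpha'(P_n)=\FL{n/2}$, which handles the trivial direction when $k=\FL{n/2}$. For the range $2\le k<\FL{n/2}$, every double-broom $D\in\bT^{n,2k}$ satisfies $\sigma_i(D)=f_i(n,2k)$ by Lemma~\ref{treemax}, so I only need $\alpha'(D)=k$: a spine of $2k-1$ vertices carries a matching of $k-1$ edges leaving one spine-end free to be matched to a leaf, giving $\alpha'(D)\ge k$; conversely, at most one pendant leaf at each end can lie in a matching, so deleting the surplus leaves does not change $\alpha'$, and what remains is a path on $2k+1$ vertices, whose matching number is $k$. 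Thus $\alpha'(D)=k$ and equality holds for every $D\in\bT^{n,2k}$.

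The one genuinely delicate point is the ``only if'' direction of the equality characterization when $k=\FL{n/2}$, because Observation~\ref{l0} gives only $\sigma_i(G)\le\sigma_i(T)$ with no guarantee of strictness. Suppose $\sigma_i(G)=\sigma_i(P_n)$. Then for every spanning tree $T$ of $G$ we must have $\sigma_i(T)=\sigma_i(P_n)$, and by the bounds above this forces $\alpha'(T)=\FL{n/2}$ and then $T=P_n$; that is, every spanning tree of $G$ is a Hamiltonian path. A short exchange argument shows this forces $\Delta(G)\le 2$: if a vertex $v$ has three neighbours, start from any spanning tree, add a non-tree edge at $v$ to create a cycle through $v$, and delete an edge of that cycle not incident to $v$ (repeating this step at most once if $v$ still has degree $2$) to reach a spanning tree in which $v$ has degree at least $3$, a contradiction. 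Hence $G$ is $P_n$ or $C_n$. Finally $G\ne C_n$: every vertex of $C_n$ has eccentricity $\FL{n/2}$, whereas deleting one edge of $C_n$ to obtain $P_n$ raises the eccentricities of the two resulting end-vertices to $n-1>\FL{n/2}$ (using $n\ge 4$), while no eccentricity drops, so $\sigma_i(C_n)<\sigma_i(P_n)$. Therefore $G=P_n$, completing the characterization. The rest of the work is routine bookkeeping; the only nontrivial ingredients are Lemma~\ref{l3}, the monotonicity in Lemma~\ref{treemax}, and this spanning-tree/exchange argument for the last equality case.
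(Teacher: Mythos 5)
Your proof is correct and follows the same basic route as the paper: pass to a spanning tree via Observation~\ref{l0} and then invoke Lemma~\ref{l3}. The one structural difference is in how the spanning tree is chosen. The paper grows a maximum matching of $G$ into a spanning tree, which therefore has matching number exactly $k$, so Lemma~\ref{l3} applies directly with no monotonicity step; you instead take an arbitrary spanning tree with matching number $k'\le k$ and then use the strict monotonicity of $f_i(n,\cdot)$ from Lemma~\ref{treemax} to climb from $f_i(n,2k')$ to $f_i(n,2k)$. Both work; the paper's choice is marginally slicker, while yours requires the extra (correctly handled) bookkeeping for $k'=1$ and for $k'<\FL{n/2}=k$. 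Where your write-up genuinely adds value is the equality analysis: the paper's proof verifies only the inequalities and does not argue the ``only if'' direction when $k=\FL{n/2}$, whereas you supply it --- every spanning tree must attain $\sigma_i(P_n)$, hence every spanning tree is a Hamiltonian path, the exchange argument forces $\Delta(G)\le2$, and $C_n$ is excluded because its vertices all have eccentricity $\FL{n/2}<n-1$. That argument is sound and closes a gap the paper leaves implicit; your verification that double-brooms in $\bT^{n,2k}$ have matching number exactly $k$ is likewise a detail the paper omits.
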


\begin{proof}
Every connected graph $G$ has a spanning tree with the same matching number,
because a maximum matching in $G$ can be grown to a spanning tree by
iteratively adding an edge of $G$ joining two components of the current
subgraph until $n-1$ edges are obtained.  The resulting tree $T$ cannot have a
larger matching, since it would also be a matching in $G$.

Invoking Observation~\ref{l0} and Lemma~\ref{l3}, we thus have
$\sigma_i(G)\le \sigma_i(T)\le f_i(n,2k)$ if $2\le k<\FL{n/2}$ and
$\sigma_i(G)\le \sigma_i(T)\le \sigma_i(P_n)$ if $k<\FL{n/2}$.
\end{proof}

The restriction to $\alpha'(G)<\FL{n/2}$ no longer applies when we discuss
$\sigma_2$.  In particular, we include $k=\FL{n/2}$ in the following theorem.

\begin{Theorem}
Let $G$ be an $n$-vertex connected graph with matching number $k$, where
${2\le k\le\FL{n/2}}$.  The case $4\le n\le 6$ and $k=\FL{n/2}$ is exceptional,
with $\sigma_2(G)\ge \CH n2$ and equality if and only if $G= K_n$.  Otherwise,
$\sigma_2(G)\ge 2n+4k-6$, with equality if and only if
$G= K_1\gjoin \left((k-1)K_2+\Kb_{n+1-2k}\right)$.
\end{Theorem}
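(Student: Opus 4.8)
The plan is to handle the easy exceptional cases first and then attack the main bound via the number of dominating vertices, exactly as in the earlier matching-number and chromatic-number arguments. For the exceptional case $4\le n\le 6$ with $k=\FL{n/2}$, one checks by hand that a connected $n$-vertex graph with a perfect (or near-perfect) matching and $\sigma_2$ below the claimed ``otherwise'' value $2n+4k-6$ can only be $K_n$, which has $\sigma_2=\CH n2$; there are only finitely many such graphs, so this is a short finite verification. From now on assume we are in the generic regime, so that Lemma~\ref{tm} applies when $k<\FL{n/2}$ (when $k=\FL{n/2}$ and $n\ge7$ one argues separately that not all vertices can be dominating unless $G=K_n$, and $K_n$ then has $\sigma_2$ well above $2n+4k-6$, so it is not the minimizer).

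The core argument proceeds by letting $S$ be the set of dominating vertices, $s=\C S$. If $s\ge1$, every non-dominating vertex has eccentricity $2$, so each edge contributes $1$, $2$, or $4$ depending on how many of its endpoints lie in $S$, giving
$$
\sigma_2(G)\ge \CH s2 + 2s(n-s) + 4\,|E(G-S)|.
$$
Here $G-S$ has matching number $k-s$ (matching edges all avoid $S$ or use at most one endpoint in $S$; the precise bookkeeping needs that a maximum matching of $G$ restricted away from $S$ still has size at least $k-s$, which follows because each dominating vertex can be matched to a currently-uncovered vertex). A connected-ish lower bound on $|E(G-S)|$ in terms of its matching number $k-s$ is needed: a graph with matching number $\ell$ and no isolated handling must have at least $\ell$ edges, and to keep the whole graph connected we actually want roughly $(k-s)$ edges from the matching plus enough to attach the $\Kb$-part; the clean statement is $|E(G-S)|\ge k-s$, achieved when $G-S$ is $(k-s)K_2 + \Kb_{n+1-2k}$ sitting inside $K_1\gjoin(\cdots)$. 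Substituting $s=1$ (which one shows minimizes the resulting expression $\CH s2+2s(n-s)+4(k-s)$ over $1\le s\le k$, since it is convex in $s$ with minimizer below $1$ in this regime) yields $\sigma_2(G)\ge 2(n-1)+4(k-1) = 2n+4k-6$, with the extremal graph forced to be $K_1\gjoin\bigl((k-1)K_2+\Kb_{n+1-2k}\bigr)$.

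For the case $s=0$ one shows the bound is not beaten: every vertex has eccentricity $\ge2$, so $\sigma_2(G)\ge 4\,|E(G)|$, and a connected graph with matching number $k$ on $n$ vertices has at least... this is where the main obstacle lies. I expect the crux to be a sharp lower bound on $|E(G)|$ for a connected $n$-vertex graph with matching number $k$ and \emph{no} dominating vertex — we need $4|E(G)| > 2n+4k-6$, i.e. $|E(G)| \ge (n+2k-2)/2$ roughly, equivalently $|E(G)|\ge n-1+ (k-1) - \text{something}$. Since $G$ is connected it has $\ge n-1$ edges, and one must squeeze out the extra $\sim 2k$ edges from the presence of a size-$k$ matching together with connectivity (the Gallai–Edmonds structure, or a direct argument: the $n-2k$ vertices outside a maximum matching are independent and each must attach into the matched part, while the $2k$ matched vertices contribute their own edges). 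The comparison is tight-ish, so care is needed for small $k$ relative to $n$; I would verify the inequality $4|E(G)|>2n+4k-6$ holds strictly whenever $s=0$, which also confirms that equality in the theorem requires $s\ge1$ and hence the stated extremal graph. Finally, one double-checks that $K_1\gjoin\bigl((k-1)K_2+\Kb_{n+1-2k}\bigr)$ really has matching number exactly $k$ (the $k-1$ copies of $K_2$ plus one edge from $K_1$ to a leftover vertex) and diameter $2$, so that all the eccentricity-$2$ computations above are exact, giving $\sigma_2 = 2n+4k-6$ and completing the characterization.
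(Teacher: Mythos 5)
Your skeleton for the main range (split by the number $s$ of dominating vertices, bound $\sigma_2(G)\ge\binom s2+2s(n-s)+4|E(G-S)|$, use $|E(G-S)|\ge k-s$, and minimize over $s$) is the same as the paper's, but there are two genuine gaps. First, you have misplaced the difficulty: the $s=0$ case needs no Gallai--Edmonds structure and no sharp edge count in terms of the matching number. Connectivity alone gives $|E(G)|\ge n-1$, hence $\sigma_2(G)\ge 4(n-1)=4n-4$, while $k\le n/2$ gives $2n+4k-6\le 4n-6<4n-4$; the case is dispatched in one line. Second, and more seriously, your case analysis omits the range $k<s\le n$. By Lemma~\ref{tm} this can occur only when $k=\FL{n/2}$, but then it genuinely does occur: $K_s\gjoin\Kb_{n-s}$ has matching number $\FL{n/2}$ for every $s>\FL{n/2}$, not just $s=n$. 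In that range the term $4(k-s)$ in your bound is negative and your minimization ``over $1\le s\le k$'' says nothing; one must separately minimize $\binom s2+2s(n-s)$ over $\FL{n/2}+1\le s\le n$, which (being concave in $s$) is attained at an endpoint, namely $s=n$ with value $\binom n2$. This is exactly where the exceptional cases $4\le n\le 6$ come from ($\binom n2<2n+4k-6$ there), and for $n\ge7$ the margin is only $\binom n2-(2n+4k-6)\ge1$, so ``$K_n$ has $\sigma_2$ well above the bound'' is not accurate and the intermediate values of $s$ cannot be waved away.

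A minor correction: the expression $\binom s2+2s(n-s)+4(k-s)$ is concave in $s$ (leading coefficient $-\frac32$), not convex. The paper instead writes the difference from its value at $s=1$ as $(s-1)(2n-\frac32s-6)$ and checks the second factor is positive for $s\le k\le\FL{n/2}$ with $(n,k)\ne(4,2)$; this is also where the exclusion of $(4,2)$ from the main argument enters, so the endpoint comparison does need that factorization rather than a convexity appeal.
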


\begin{proof}
First consider the case $(n,k)=(4,2)$.  Checking individual graphs shows
$\sigma_2(G)\ge\CH42$, with equality if and only if $G= K_4$.  Henceforth
we exclude this exceptional case.

The $n$-vertex connected graph $K_1\gjoin \left((k-1)K_2+\Kb_{n+1-2k}\right)$
has matching number $k$.  It has one dominating vertex (eccentricity $1$)
and $n-1$ vertices with  eccentricity $2$, so the value of $\sigma_2$ on this
graph is $2n+4k-6$.

Let $s$ be the number of dominating vertices in $G$.  If $s=0$, then every
vertex has eccentricity at least $2$, so $\sigma_2(G)\ge 4|E(G)|\ge 4(n-1)$,
since $G$ is connected.  Since $k\le {n}/{2}$, we have $2n+4k-6<4(n-1)$.  Thus
$\sigma_2\left(K_1\gjoin \left[(k-1)K_2+\Kb_{n+1-2k}\right]\right)< 4(n-1)$.
Hence over this class $\sigma_2$ cannot be minimized by a graph with no
dominating vertex.

Thus we may assume $s\ge1$ and $G=K_s\gjoin H$, where $H$ has $n-s$ vertices
and maximum degree less than $n-s-1$.  Since dominating vertices have
eccentricity $1$ and those in $H$ have eccentricity $2$, we have
$\sigma_2(G)=\CH s2+2s(n-s)+4|E(H)|$.  We consider two ranges for $s$.

If $1\le s\le k$, then $\alpha'(G)=k$ requires $|E(H)|\ge k-s$.
Note that $2n-\FR32s-6>0$, since $3s<4n-12$ when $s\le k\le\FL{n/2}$ and
$(n,k)\neq(4,2)$.  Hence
\begin{eqnarray*}\label{mats2}
\sigma_2(G)&\ge & \CH s2+2s(n-s)+4(k-s)\qquad\qquad\qquad(4.1)\\
&=&2n+4k-6+(s-1)\left(2n-\FR32s-6\right)\\
&\ge&2n+4k-6.\qquad\qquad\qquad\qquad\qquad\qquad(4.2)
\end{eqnarray*}
Equality holds in $(4.1)$ if and only if
$G=K_s\gjoin\left((k-s)K_2+\Kb_{n+s-2k}\right)$.
In $(4.2)$, equality holds if and only if $s=1$, since $3s<4n-12$.
Thus $\sigma_2(G)\ge2n+4k-6$, with equality if and only if
$G= K_1\gjoin \left((k-1)K_2+\Kb_{n+1-2k}\right)$.

By Lemma~\ref{tm}, the case $s>k$ occurs only when $k=\FL{n/2}$.
We have noted that $G= K_s\gjoin H$ and $\sigma_2(G)=\CH s2+2s(n-s)+4|E(H)|$.
Since $s>\FL{n/2}$, we have a matching of size $k$ that matches vertices of
$H$ to dominating vertices regardless of any edges in $H$, so $\sigma_2(G)$ is
minimized (only) by setting $H=\Kb_{n-s}$.  Let
$G_s=K_s\gjoin \Kb_{n-s}$, so
$\sigma_2(G_s)=\CH s2+2s(n-s)=-\frac{3}{2}s^2+(2n-\frac{1}{2})s$. Note that
$G_n=K_n$.  Over $\lfloor\frac{n}{2}\rfloor+1\le s\le n$, we have
\begin{eqnarray*}
\min \sigma_2(G_s)&=&\min\{\sigma_2(G_{\lfloor\frac{n}{2}\rfloor+1}),\sigma_2(K_n)\}\\
                &=&\begin{cases}
\min\{\frac{5}{8}n^2-\frac{5}{8},\CH n2\} & \mbox{if $n$ is odd}\\
\min\{\frac{5}{8}n^2+\frac{1}{4}n-2,\CH n2\} & \mbox{if $n$ is even}
                   \end{cases} \\
                   &=& \CH n2.
\end{eqnarray*}
Thus over $s>k$, the value of $\sigma_2$ is minimized by choosing $G= K_n$,
where it equals $\CH n2$.

Comparing these two ranges for $s$, we have
$\sigma_2(G)\ge \min\{2n+4k-6,\CH n2\}$.  Since $2\le k\le\FL{n/2}$, direct
calculation yields $2n+4k-6>\CH n2$ if $4\le n\le 6$ and $k=\FL{n/2}$,
while $2n+4k-6<\CH n2$ if $n\ge7$ or if $4\le n\le 6$ and $k<\FL{n/2}$.

Hence the minimum $\sigma_2$ is achieved uniquely by
$K_1\gjoin \left((k-1)K_2+\Kb_{n+1-2k}\right)$ unless
$4\le n\le 6$ and $k=\FL{n/2}$.  In this exceptional case,
the minimum $\sigma_2$ is $\CH n2$, achieved uniquely
by $K_n$ (including the case $(n,k)=(4,2)$).
\end{proof}

Finally, we offer a construction for large $\sigma_2(G)$ when $\alpha'(G)=k$.

\begin{Example}
We fix $k$ and $n$ and stratify by the diameter, $d$.  Along any path with
length $d$ there is a matching with size $\CL{d/2}$, so we must have $d\le 2k$.
In fact, we take $d<2k$.

When $\alpha'(G)=k$, there are $n-2k$ vertices outside a maximum matching.  To
ensure this while having many edges with cost $d^2$, start with a path having
$d-1$ vertices, append $n-2k+1$ leaves at one end, and append $2k-d$ leaves at
the other end.  Add edges to make these $2k-d$ vertices into a clique.
Any matching omits $n-2k$ vertices among the leaves, but the rest can be
covered (assuming that $n$ is even).  Let $C_{n,k,d}$ be the resulting graph.
We have $\sigma_2(C_{n,k,d})=g_2(d)+(n-d-1)d(d-1)+\CH{2k-d}2 d^2$.
Setting $d=k$ yields a construction where $\sigma_2$ is
$k^4/2+k^2(n-k)+k^3/12+O(k^2)$ when $k$ grows as a constant fraction of $n$.
When $k$ is quite small, the dominant term changes, and then it is better to
take $d=2k-1$.

When $k=\FL{n/2}$, this construction reduces to $B_{n,d}$, which we expect to
generally be extremal without the restriction on matching number.  On the
other hand, when the matching number is only $\CL{d/2}$, we are forced back
toward the realm of trees.  We leave the resolution of the upper bound on
$\sigma_2$ in these classes for future research.
\end{Example}


\begin{thebibliography}{99}
\frenchspacing

\bibitem{BH} F. Buckley, F. Harary, {\it Distance in Graphs\/},
Addison-Wesley, Redwood City, Carlifornia, 1990.

\bibitem{DGS} P. Dankelmann, W. Goddard, C.S. Swart,
{\it The average eccentricity of a graph and its subgraphs\/},
Util. Math. 65 (2004) 41--51.

\bibitem{DM} P. Dankelmann, S. Mukwembi,
{\it Upper bounds on the average eccentricity\/},
Discrete Appl. Math. 167 (2014) 72--79.

\bibitem{DO} P. Dankelmann, F.J. Osaye,
{\it Average eccentricity, minimum degree and maximum degree in graphs\/}
J. Comb. Optim. 40 (2020) 697--712.

\bibitem{DOMR} P. Dankelmann, F.J. Osaye, S. Mukwembi, B.G. Rodrigues,
{\it Upper bounds on the average eccentricity of $K_3$-free and $C_4$-free
graphs\/}, Discrete Appl. Math.  270 (2019) 106--114.

\bibitem{DLG} K.C. Das, D.W. Lee, A. Graovac,
{\it Some properties of Zagreb eccentricity indices\/},
Ars Math. Contemp. 6 (2013) 117--125.

\bibitem{DMCC} K.C. Das, A.D. Maden, I.N. Cang{\"u}l, A.S. \c{C}evik,
 {\it On average eccentricity of graphs\/},
Proc. Nat. Acad. Sci. India, Sect. A  87 (2017) 23--30.

\bibitem{DI} Z. Du, A. Ili{\'c},
{\it On AGX conjectures regarding average eccentricity\/},
MATCH Commun. Math. Comput. Chem. 69 (2013) 597--609.

\bibitem{DZT} Z. Du, B. Zhou, N. Trinajsti{\'c},
{\it  Extremal properties of the Zagreb eccentricity indices\/},
Croat. Chem. Acta 85 (2012) 359--362.

\bibitem{EK} A.P. Er\v{s}ov, G.I. Ko\v{z}uhin,
{\it Estimates of the chromatic number of connected graphs\/},
(Russian) Dokl. Akad. Nauk SSSR 142 (1962) 270--273.

\bibitem{GH} M. Ghorbani, M.A. Hosseinzadeh,
{\it A new version of Zagreb indices\/},
Filomat  26 (2012) 93--100.

\bibitem{GRTW} I. Gutman, B. Ru\v s\v ci\'c, N. Trinajsti\'{c}, C.F. Wilcox,
{\it Graph theory and molecular orbitals. XII. Acyclic polyenes\/},
 J. Chem. Phys. 62 (1975) 3399--3405.

\bibitem{GT} I. Gutman, N. Trinajsti\'c,
{\it Graph theory and molecular orbitals.  Total $\varphi$-electron energy of
alternant hydrocarbons\/},
Chem. Phys. Lett. 17 (1972) 535--538.

\bibitem{H} F. Hayat,
{\it The minimum second Zagreb eccentricity index of graphs with parameters\/},
Discrete Appl. Math. 285 (2020) 307--316.

\bibitem{HHMRD} P. Hauweelea, A. Hertz, H. M\'elot, B. Ries, G. Devillez,
{\it Maximum eccentric connectivity index for graphs with given diameter\/},
Discrete Appl. Math. 268 (2019) 102--111.

\bibitem{HLT} C. He, S. Li, J. Tu,
{\it Edge-grafting transformations on the average eccentricity of graphs and
their applications\/},
Discrete Appl. Math. 238 (2018) 95--105.

\bibitem{HBDAA} B. Horoldagva, L. Buyantogtokh, S. Dorjsembe, E. Azjargal,
D. Adiyanyam, {\it On graphs with maximum average  eccentricity\/},
Discrete Appl. Math. 301 (2021) 109--117.

\bibitem{I} A. Ili{\'c},
{\it On the extremal properties of the average eccentricity\/},
Comput.  Math. Appl. 64 (2012) 2877--2885.

\bibitem{KS} E.V. Konstantinova, V.A. Skorobogatov,
{\it Molecular hypergraphs: The new representation of nonclassical molecular
structures with polycentric delocalized bonds\/},
J. Chem. Inf. Comput. Sci. 35 (1995) 472--478.

%\bibitem{MS}
%M. Knor, J. \c{S}ir\'a\c{n},
%{\it Smallest vertex-transitive graphs of given degree and diameter\/},
%J. Graph Theory 75 (2014), 137--149.

\bibitem{LZ} J. Li, J. Zhang,
{\it On the second Zagreb eccentricity indices of graphs\/},
Appl. Math. Comput. 352  (2019) 180--187.

%\bibitem{LW} Z. Luo, J. Wu,
%{\it Zagreb eccentricity indices of the generalized
%hierarchical product graphs and their applications\/},
%J. Appl. Math. 1 (2014) 1--8.


\bibitem{O} O. Ore,
{\it Diameters in graphs\/},
J. Combin. Theory 5 (1968) 75--81.

\bibitem{QD} X. Qi, Z. Du,
{\it On Zagreb eccentricity indices of trees\/},
MATCH Commun. Math.  Comput. Chem. 78 (2017) 241--256.

\bibitem{QZL} X. Qi, B. Zhou, J. Li,
{\it Zagreb eccentricity indices of unicyclic graphs\/},
Discrete Appl. Math. 233 (2017) 166--174.

\bibitem{QZ} P. Qiao, X. Zhan,
{\it The largest graphs with given order and diameter: a simple proof\/},
Graphs Combin. 35 (2019) 1715--1716.

\bibitem{SLH} X. Song, J. Li, W. He,
{\it On Zagreb eccentricity indices of cacti\/},
Appl. Math. Comput. 383  (2020) 125361.

\bibitem{TQ}  Y. Tang, X. Qi,
{\it Ordering graphs with large eccentricity-based topological indices\/}
J. Inequal. Applic. (2021) 2021:24, 11 pages.

\bibitem{TZ1} Y. Tang, B. Zhou,
{\it On average eccentricity\/},
MATCH Commun. Math. Comput. Chem. 67 (2012) 405--423.

\bibitem{TZ2} Y. Tang, B. Zhou,
{\it Ordering unicyclic graphs with large average eccentricities\/},
Filomat 28:1 (2014) 207--210.

\bibitem{TC2}  R. Todeschini, V. Consonni,
Molecular descriptors for chemoinformatics,  Wiley-VCH, Weinheim (2009).

\bibitem{T} P. Tur\'an,
{\it Eine Extremalaufgabe aus der Graphentheorie\/} (Hungarian)
Mat. Fiz. Lapok 48 (1941), 436--452.

\bibitem{VG}D. Vuki{\v{c}}evi{\'c}, A. Graovac,
{\it Note on the comparison of the first and second normalized Zagreb
eccentricity indices\/}, Acta Chem. Slov. 57 (2010) 524--538.

%\bibitem{W}D.B. West, Introduction to Graph Theory, second edition.
%Prentice-Hall (2001).

\bibitem{XZT} R. Xing, B. Zhou, N. Trinajsti{\'c},
{\it On Zagreb eccentricity indices\/},
Croat. Chem. Acta 84 (2011) 493--497.

\end{thebibliography}
\end{document}